\newtheorem{theorem}{Theorem}[section]
\newtheorem{lemma}[theorem]{Lemma}
\newtheorem{corollary}[theorem]{Corollary}
\newtheorem{remark}[theorem]{Remark}
\renewcommand \theequation {%
\ifnum \c@section>\z@ \@arabic\c@section.%
\fi\@arabic\c@equation} \@addtoreset{equation}{section}
\providecommand{\ud}[1]{\mathrm{d}{#1}}
\providecommand{\abs}[1]{\left\vert#1\right\vert}
\providecommand{\nm}[1]{\left\Vert#1\right\Vert}
\providecommand{\tm}[2]{\left\Vert#1\right\Vert_{L^2(#2)}}
\providecommand{\im}[2]{\left\Vert#1\right\Vert_{L^{\infty}(#2)}}
\providecommand{\lnnm}[1]{{\left\Vert#1\right\Vert}_{L^{\infty}L^{\infty}}}
\providecommand{\lnm}[1]{\left\Vert#1\right\Vert_{L^{\infty}}}
\providecommand{\tnnm}[1]{{\left\Vert#1\right\Vert}_{L^{2}L^2}}
\def\dt{\partial_t}
\def\p{\partial}
\def\ls{\lesssim}
\def\half{\frac{1}{2}}
\def\rt{\rightarrow}
\def\r{\mathbb{R}}
\def\no{\nonumber}
\def\ue{\mathrm{e}}
\def\u{U^{\e}}
\def\ub{\mathscr{U}^{\e}}
\def\bu{\bar U^{\e}}
\def\bub{\bar{\mathscr{U}}^{\e}}
\def\uc{U}
\def\ubc{\mathscr{U}}
\def\buc{\bar U}
\def\bubc{\bar{\mathscr{U}}}
\def\ls{\lesssim}
\def\half{\frac{1}{2}}
\def\e{\epsilon}
\def\s{\mathcal{S}}
\def\vx{\vec x}
\def\vw{\vec w}
\def\nx{\nabla_{x}}
\def\px{\p_{\eta}}
\def\rt{\rightarrow}
\def\l{\lambda}
\def\ll{\mathcal{L}}
\def\r{\mathbb{R}}
\def\f{f}
\def\k{\kappa}
\def\q{Q}
\def\qb{\mathscr{Q}}
\def\v{\mathscr{V}}
\def\ff{\mathcal{F}}
\begin{document}
\title{Diffusive Limit with Geometric Correction of
Unsteady Neutron Transport Equation}
\author{Lei Wu}
\address{
Department of Mathematical Sciences\\
Carnegie Mellon University \\
Wean Hall 6113, 5000 Forbes Avenue, Pittsburgh, PA 15213, USA } \email[L.
Wu]{lwu2@andrew,cmu.edu}
\subjclass[2000]{35L65, 82B40, 34E05}

\begin{abstract}
We consider the diffusive limit of an unsteady neutron transport
equation in a two-dimensional plate with one-speed velocity. We show the solution can be approximated by the sum of interior solution, initial layer, and boundary layer with geometric correction. Also, we construct a counterexample to the classical theory in \cite{Bensoussan.Lions.Papanicolaou1979} which states the behavior of solution near boundary can be described by the Knudsen layer derived from the Milne problem.\\
\textbf{Keywords:} compatibility condition, $\e$-Milne problem, Knudsen layer,
geometric correction.
\end{abstract}

\maketitle

\pagestyle{myheadings} \thispagestyle{plain} \markboth{LEI WU}{GEOMETRIC CORRECTION FOR DIFFUSIVE EXPANSION OF NEUTRON
TRANSPORT EQUATION}
%
%
%
%

\section{Introduction and Notation}

\subsection{Problem Formulation}

We consider a homogeneous isotropic unsteady neutron transport
equation in a two-dimensional unit plate $\Omega=\{\vx=(x_1,x_2):\
\abs{\vx}\leq 1\}$ with one-speed velocity $\Sigma=\{\vw=(w_1,w_2):\
\vw\in\s^1\}$ as
\begin{eqnarray}
\left\{
\begin{array}{rcl}
\e^2\dt u^{\e}+\e \vw\cdot\nabla_x u^{\e}+u^{\e}-\bar
u^{\e}&=&0\label{transport}\ \ \ \text{in}\ \
[0,\infty)\times\Omega,\\\rule{0ex}{1.0em}
u^{\e}(0,\vx,\vw)&=&h(\vx,\vw)\ \ \text{in}\ \ \Omega\\\rule{0ex}{1.0em}
u^{\e}(t,\vx_0,\vw)&=&g(t,\vx_0,\vw)\ \ \text{for}\ \ \vw\cdot\vec
n<0\ \ \text{and}\ \ \vx_0\in\p\Omega,
\end{array}
\right.
\end{eqnarray}
where
\begin{eqnarray}\label{average 1}
\bar
u^{\e}(t,\vx)=\frac{1}{2\pi}\int_{\s^1}u^{\e}(t,\vx,\vw)\ud{\vw}.
\end{eqnarray}
and $\vec n$ is the outward normal vector on $\p\Omega$, with the
Knudsen number $0<\e<<1$. The initial and boundary data satisfy the
compatibility condition
\begin{eqnarray}\label{compatibility condition}
h(\vx_0,\vw)=g(0,\vx_0,\vw)\ \ \text{for}\ \ \vw\cdot\vec
n<0\ \ \text{and}\ \ \vx_0\in\p\Omega.
\end{eqnarray}
We intend to study the diffusive limit of $u^{\e}$ as $\e\rt0$.

Based on the flow direction, we can divide the boundary
$\Gamma=\{(\vx,\vw):\ \vx\in\p\Omega\}$ into the in-flow boundary
$\Gamma^-$, the out-flow boundary $\Gamma^+$, and the grazing set
$\Gamma^0$ as
\begin{eqnarray}
\Gamma^{-}&=&\{(\vx,\vw):\ \vx\in\p\Omega,\ \vw\cdot\vec n<0\},\\
\Gamma^{+}&=&\{(\vx,\vw):\ \vx\in\p\Omega,\ \vw\cdot\vec n>0\},\\
\Gamma^{0}&=&\{(\vx,\vw):\ \vx\in\p\Omega,\ \vw\cdot\vec n=0\}.
\end{eqnarray}
It is easy to see $\Gamma=\Gamma^+\cup\Gamma^-\cup\Gamma^0$.

The study of neutron transport equation dates back to 1950s. The main methods include the explicit formula and spectral analysis of the transport operators(see \cite{Larsen1974}, \cite{Larsen1974=}, \cite{Larsen1975}, \cite{Larsen1977}, \cite{Larsen.D'Arruda1976}, \cite{Larsen.Habetler1973}, \cite{Larsen.Keller1974}, \cite{Larsen.Zweifel1974}, \cite{Larsen.Zweifel1976}).  In the classical paper \cite{Bensoussan.Lions.Papanicolaou1979}, a systematic construction of boundary layer was provided via Milne problem. However, this construction was proved to be problematic for steady equation in \cite{AA003} and a new boundary layer construction based on $\e$-Milne problem with geometric correction was presented. In this paper, we extend this result to unsteady equation and consider a more complicated case with initial layer involved.

\subsection{Main Results}

We first present the well-posedness of the equation (\ref{transport}).
\begin{theorem}\label{main 1}
Assume $g(t,x_0,\vw)\in L^{\infty}([0,\infty)\times\Gamma^-)$ and
$h(\vx,\vw)\in L^{\infty}(\Omega\times\s^1)$. Then for the unsteady
neutron transport equation (\ref{transport}), there exists a unique
solution $u^{\e}(t,\vx,\vw)\in
L^{\infty}([0,\infty)\times\Omega\times\s^1)$ satisfying
\begin{eqnarray}\label{main theorem 1}
\im{u^{\e}}{[0,\infty)\times\Omega\times\s^1}\leq
C(\Omega)\bigg(\nm{h}_{L^{\infty}(\Omega\times\s^1)}+\nm{g}_{L^{\infty}([0,\infty)\times\Gamma^-)}\bigg).
\end{eqnarray}
\end{theorem}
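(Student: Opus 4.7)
The plan is to solve (\ref{transport}) via a Duhamel reformulation along characteristics followed by a fixed-point argument. Integrating (\ref{transport}) along the characteristic $\vx(s) = \vx - \frac{\vw}{\e}(t-s)$ produces
\begin{equation*}
u^{\e}(t,\vx,\vw) = e^{-(t-t_{0})/\e^{2}} A + \int_{t_{0}}^{t} \frac{1}{\e^{2}} e^{-(t-s)/\e^{2}} \bar u^{\e}(s,\vx(s)) \,\ud{s},
\end{equation*}
where $t_{0} = t_{0}(\vx,\vw) \geq 0$ is the backward exit time and $A \in \{h(\vx_{0},\vw),\, g(t_{0},\vx_{0},\vw)\}$ according to whether the characteristic first reaches $\{s=0\}$ or $\p\Omega$; the compatibility condition (\ref{compatibility condition}) makes the two cases consistent when $t_{0}=0$ and $\vx_{0}\in\p\Omega$ simultaneously. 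Solutions of (\ref{transport}) correspond to fixed points of the operator $\t u$ defined as this right-hand side.

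For existence and uniqueness I would work on $L^{\infty}([0,T]\times\Omega\times\s^{1})$ equipped with the weighted norm $\nm{f}_{\alpha} = \sup e^{-\alpha t}\abs{f}$. A direct calculation of the exponential integral gives, for any $\alpha > 0$,
\begin{equation*}
\int_{t_{0}}^{t} \frac{1}{\e^{2}} e^{-(t-s)/\e^{2}} e^{\alpha s}\, \ud{s} \leq \frac{e^{\alpha t}}{1 + \alpha\e^{2}},
\end{equation*}
so $\t$ is a contraction of factor $(1+\alpha\e^{2})^{-1} < 1$ in $\nm{\cdot}_{\alpha}$, yielding a unique fixed point on every finite interval $[0,T]$. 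Since $\nm{\cdot}_{\alpha}$ and $\nm{\cdot}_{L^{\infty}}$ are equivalent on bounded intervals, the iterates converge in $L^{\infty}([0,T])$, and uniqueness on each $[0,T]$ lets the solutions be glued into a unique global $u^{\e}$.

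To extract the estimate (\ref{main theorem 1}), set $H := \nm{h}_{L^{\infty}(\Omega\times\s^{1})} + \nm{g}_{L^{\infty}([0,\infty)\times\Gamma^{-})}$. Since $\abs{A} \leq H$ and $\int_{t_{0}}^{t} \e^{-2} e^{-(t-s)/\e^{2}}\,\ud{s} = 1 - e^{-(t-t_{0})/\e^{2}}$, for any $u$ with $\nm{u}_{L^{\infty}} \leq H$ one has
\begin{equation*}
\abs{\t u(t,\vx,\vw)} \leq e^{-(t-t_{0})/\e^{2}} H + \bigl(1 - e^{-(t-t_{0})/\e^{2}}\bigr)\, \nm{u}_{L^{\infty}} \leq H.
\end{equation*}
Thus $\t$ preserves the closed ball $\{u : \nm{u}_{L^{\infty}} \leq H\}$. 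Starting the fixed-point iteration at $u^{0} = 0$ keeps every iterate in this ball, so passing to the limit furnishes $\nm{u^{\e}}_{L^{\infty}} \leq H$, which is (\ref{main theorem 1}) with $C(\Omega) = 1$.

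The main point of care is reconciling the weighted-norm contraction (which only yields convergence of the iteration on each fixed time interval) with the uniform global bound: the reconciliation is that the $L^{\infty}$ ball invariance of $\t$ holds with a constant independent of both $t$ and $\e$, so the pointwise limit automatically inherits the bound on the whole half-line. A secondary technical issue is ensuring that $\t$ is well-defined, in particular that $t_{0}(\vx,\vw)$ behaves well on the grazing set $\Gamma^{0}$; this is a measure-zero obstruction that can be dealt with by the standard convention of excluding $\Gamma^{0}$ from the trace, using (\ref{compatibility condition}) to match $h$ and $g$ at the in-flow corner.
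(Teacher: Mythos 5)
Your proposal is correct, and it takes a genuinely shorter route than the paper. The paper proves Theorem \ref{main 1} through a long chain: it first inserts a penalty $\lambda u_{\lambda}$ so that the Duhamel kernel $\ue^{-(1+\lambda)(\cdot)}$ is a contraction in $L^{\infty}$ (Lemmas \ref{well-posedness lemma 1}--\ref{well-posedness lemma 2}); the penalty makes the estimate blow up as $\lambda\rt0$, so it then derives a uniform-in-$\lambda$ $L^{2}$ estimate via the test function $\phi=-\vw\cdot\nx\zeta$ and elliptic regularity (Lemma \ref{well-posedness lemma 4}, Theorem \ref{LT estimate}), bootstraps $L^{2}\rt L^{\infty}$ by a double Duhamel iteration with a change-of-variables and Jacobian computation (Theorem \ref{LI estimate}), and finally proves the maximum principle via a monotone iteration on the penalized problem and a $\lambda\rt0$ passage (Theorem \ref{maximum principle}, Corollary \ref{wellposedness estimate 2}). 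Your argument replaces the $\lambda$-penalty by the exponential time weight $\ue^{-\alpha t}$: the weight produces the same contraction factor $(1+\alpha\e^{2})^{-1}$ in the fixed-point iteration that $\lambda$ produces via $(1+\lambda)^{-1}$, but because the weight enters the norm and not the equation, you can pass directly to the limit without the entire $L^{2}$ machinery. The ball-invariance step recovers the sharp constant $C(\Omega)=1$, which is exactly what the paper's Theorem \ref{well-posedness 2} states, so nothing is lost.

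What the paper's longer route buys, and what you should be aware of, is that Theorem \ref{LT estimate} and Theorem \ref{LI estimate} produce quantitative $L^{2}$ and $L^{\infty}$ bounds \emph{with a source term} $f$ (scaled by inverse powers of $\e$), and these are the estimates that actually get used in the proof of the diffusive limit, Theorem \ref{main 2}, where the remainder equation has a nonzero right-hand side. Your contraction argument does not give this: the ball-invariance step is specific to $f=0$, and the weighted contraction alone would produce an $L^{\infty}$ estimate for the inhomogeneous equation that depends badly on $\alpha$ and $\e$ and would not suffice for the remainder estimate $\im{R_{4}}{\cdot}\ls\e$. So your proof is a clean and more elementary proof of Theorem \ref{main 1} in isolation, but it cannot supplant the paper's Theorems \ref{LT estimate} and \ref{LI estimate} in the logical structure of the whole paper.
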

Then we can show the diffusive limit of the equation (\ref{transport}).
\begin{theorem}\label{main 2}
Assume $g(t,\vx_0,\vw)\in C^4([0,\infty)\times\Gamma^-)$ and
$h(\vx,\vw)\in C^4(\Omega\times\s^1)$. Then for the unsteady neutron transport
equation (\ref{transport}), the unique solution
$u^{\e}(t,\vx,\vw)\in L^{\infty}([0,\infty)\times\Omega\times\s^1)$
satisfies
\begin{eqnarray}\label{main theorem 2}
\lnm{u^{\e}-\u_0-\ub_{I,0}-\ub_{B,0}}=o(1),
\end{eqnarray}
where the interior solution $\u_0$ is
defined in (\ref{expansion temp 8}), the initial layer $\ub_{I,0}$ is defined in (\ref{expansion temp 21}), and the boundary layer $\ub_{B,0}$ is defined in (\ref{expansion temp 9}). Moreover, if $g(t,\theta,\phi)=t^2\ue^{-t}\cos\phi$ and $h(\vx,\vw)=0$, then there exists
a $C>0$ such that
\begin{eqnarray}\label{main theorem 3}
\lnm{u^{\e}-\uc_0-\ubc_{I,0}-\ubc_{B,0}}\geq C>0
\end{eqnarray}
when $\e$ is sufficiently small, where the interior solution $\uc_0$ is
defined in (\ref{classical temp 2.}), the initial layer $\ubc_{I,0}$ is defined in (\ref{classical temp 21.}), and the boundary layer $\ub_{B,0}$ is defined in (\ref{classical temp 1.}).
\end{theorem}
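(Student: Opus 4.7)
The plan is to establish (\ref{main theorem 2}) via a matched asymptotic expansion with interior, initial, and boundary correctors carried to high enough order that the remainder can be controlled by Theorem~\ref{main 1}, and then to establish (\ref{main theorem 3}) by an explicit computation showing that the classical Milne layer disagrees with the true layer at leading order for the specific data.

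For (\ref{main theorem 2}), I would insert the formal ansatz
\begin{equation*}
u^{\e}\;\sim\;\sum_{k\geq 0}\e^{k}\big(\u_k+\ub_{I,k}+\ub_{B,k}\big)
\end{equation*}
into (\ref{transport}) and match orders of $\e$. The interior terms $\u_k(t,\vx)$, lying in the kernel of the collision operator, satisfy diffusion-type equations whose data are fixed by matching. The initial-layer terms $\ub_{I,k}$ live in the fast time $\tau=t/\e^2$ and decay exponentially as $\tau\rt\infty$, correcting the mismatch between $h$ and $\u_0(0,\cdot)$. The boundary-layer terms $\ub_{B,k}$ live in stretched normal coordinates and are governed by the $\e$-Milne problem \emph{with geometric correction} of \cite{AA003} rather than the flat Milne problem of \cite{Bensoussan.Lions.Papanicolaou1979}. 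The compatibility hypothesis (\ref{compatibility condition}) together with the $C^{4}$ regularity of $g,h$ ensures that the matching at $t=0$, $\vx\in\p\Omega$ can be carried out without producing a genuine corner layer.

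Once these expansions are constructed up to some order $N$ large enough to absorb the derivative losses incurred by the stretched variables, I would define the remainder
\begin{equation*}
R^{\e}:=u^{\e}-\sum_{k=0}^{N}\e^{k}\big(\u_k+\ub_{I,k}+\ub_{B,k}\big),
\end{equation*}
verify that it satisfies a transport problem of the form (\ref{transport}) with interior source, initial datum and in-flow datum each of size $o(1)$ in $L^{\infty}$ (using the decay of the layers to quantify the mismatches away from their respective support), and apply Theorem~\ref{main 1} to obtain $\lnm{R^{\e}}=o(1)$. Subtracting the $O(\e)$ correctors then yields (\ref{main theorem 2}). The main obstacle here is the a priori analysis of the $\e$-Milne problem with geometric correction: one needs $\e$-uniform $L^{\infty}$ bounds and exponential decay in the stretched variable that are compatible with the curvature of $\p\Omega$, and these estimates must be sharp enough that derivatives of the boundary-layer profile with respect to slow variables do not destroy the $o(1)$ smallness of the source driving $R^{\e}$.

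For the counterexample (\ref{main theorem 3}), I would substitute $g(t,\theta,\phi)=t^{2}\ue^{-t}\cos\phi$ and $h\equiv 0$ into both the geometric-correction expansion $\u_0+\ub_{I,0}+\ub_{B,0}$ and the classical Milne expansion $\uc_0+\ubc_{I,0}+\ubc_{B,0}$. By the first part of the theorem, $u^{\e}$ is $o(1)$-close in $L^{\infty}$ to the former, so it suffices to show that the two leading-order expansions themselves differ by a fixed $O(1)$ amount. I would exhibit this discrepancy at a suitable point $(t_{\ast},\vx_{\ast},\vw_{\ast})$ with $\vx_{\ast}$ inside the boundary layer and $t_{\ast}$ chosen so that the in-flow datum is active and the initial layer is already negligible: by direct computation, the $\cos\phi$ mode in the in-flow boundary datum generates a nonzero interior contribution through the curvature-corrected Milne operator that is absent in the flat Milne construction, producing a fixed $O(1)$ gap between $\ub_{B,0}$ and $\ubc_{B,0}$ (and a corresponding shift in $\u_0$ versus $\uc_0$). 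Tracking this gap quantitatively at $(t_{\ast},\vx_{\ast},\vw_{\ast})$ yields the lower bound $C>0$ uniformly in small $\e$.
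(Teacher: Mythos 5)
Your plan for (\ref{main theorem 2}) follows the paper's strategy in outline (expansion, remainder, well-posedness estimate), but the specific appeal to Theorem~\ref{main 1} cannot close the argument: that theorem handles only the source-free equation, whereas the remainder satisfies (\ref{transport}) with a nonzero interior source $\ll R_N$. What the paper actually invokes is the inhomogeneous $L^{\infty}$ estimate, Theorem~\ref{improved LI estimate}, whose key feature is the prefactor $\e^{-4}$ on $\tm{\ll R_N}{[0,\infty)\times\Omega\times\s^1}$ coming from the $L^2$-to-$L^{\infty}$ bootstrap. This is precisely why the expansion must be carried to order $N=4$ (so that $\ll R_4=O(\e^{5})$ and $\e^{-4}\cdot\e^{5}=O(\e)$), a point your vague ``$N$ large enough to absorb derivative losses'' does not pin down. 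You should also note, as the paper does, that the remainder's initial datum picks up $\sum_{k\geq 1}\e^{k}\ub_{B,k}(0,\cdot)$ and its in-flow datum picks up $\sum_{k\geq 1}\e^{k}\ub_{I,k}(t,\vx_0,\cdot)$ — the zeroth-order corner contributions vanish only because of the \emph{improved} compatibility condition (\ref{improved compatibility condition}), which the paper first derives from (\ref{compatibility condition}) by requiring the equation to hold at $(0,\vx_0,\vw)$ uniformly in $\e$.

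For (\ref{main theorem 3}), your reduction — compare the two leading-order expansions, using the first part of the theorem to transfer closeness to $u^{\e}$ — matches the paper. But your stated mechanism is not what drives the discrepancy. You attribute the gap to a ``nonzero interior contribution'' and a ``corresponding shift in $\u_0$ versus $\uc_0$''; in the paper's example the in-flow datum $g=t^{2}\ue^{-t}\cos\phi$ is independent of $\theta$, so both $\eta\to\infty$ limits $f_0(t,\infty,\theta)$ and $f_0^{\e}(t,\infty,\theta)$ are $\theta$-independent constants and no interior shift is claimed (indeed the proof \emph{assumes} closeness of those constants as part of the contradiction hypothesis). The real source of the $O(1)$ gap is the different in-layer decay profile: evaluating the Duhamel/Milne formulas along characteristics at the point $(\eta,\phi)=(n\e,\e)$, the classical flat Milne solution decays like $\ue^{-n}$, while the $\e$-Milne solution with geometric correction decays like $\ue^{1-\sqrt{1+2n}}$ because the force term $F(\e;\eta)\cos\phi\,\p_\phi$ bends the characteristics. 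Since $\ue^{-n}\neq\ue^{1-\sqrt{1+2n}}$ and the attached coefficients $-\bar u(0)+2+\ue^{-1}$, $-\bar U(0)+2+\ue^{-1}$ are $O(1)$ and asymptotically equal, the two profiles must differ by a fixed $O(1)$ amount, contradicting the assumed $L^{\infty}$ closeness. Your proposal would need to be redirected toward this characteristic-bending computation (including the continuity of $\bar u$ and $\bar U$ at $\eta=0$, which the paper establishes as an intermediate step) to reach the conclusion.
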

\begin{remark}
$\theta$ and $\phi$ are defined in (\ref{substitution 2}) and
(\ref{substitution 4}).
\end{remark}
It is easy to see, by a similar argument, the results in Theorem \ref{main 1} and Theorem \ref{main 2} also hold for the one-dimensional unsteady neutron transport equation, where the temporal domain is $[0,\infty)$, spacial domain is $[0,L]$ for fixed $L>0$, and velocity domain is $[-1/2,1/2]$.

\subsection{Notation and Structure of This Paper}

Throughout this paper, $C>0$ denotes a constant that only depends on
the parameter $\Omega$, but does not depend on the data. It is
referred as universal and can change from one inequality to another.
When we write $C(z)$, it means a certain positive constant depending
on the quantity $z$. We write $a\ls b$ to denote $a\leq Cb$.

Our paper is organized as follows: in Section 2, we establish the $L^{\infty}$ well-posedness of
the equation (\ref{transport}) and prove Theorem \ref{main 1}; in Section 3, we present the asymptotic analysis of the equation (\ref{transport});  in Section 4, we give the main
results of the $\e$-Milne problem with geometric correction; in Section 5, we prove the first part of Theorem \ref{main 2}; finally, in
Section 6, we prove the second part of Theorem \ref{main 2}.
%
%

\section{Well-posedness of Unsteady Neutron Transport Equation}

In this section, we consider the well-posedness of the unsteady
neutron transport equation
\begin{eqnarray}
\left\{ \begin{array}{rcl} \e^2\dt u+\e\vec w\cdot\nabla_xu+u-\bar
u&=&f(t,\vx,\vw)\ \ \text{in}\ \
[0,\infty)\times\Omega\label{neutron},\\\rule{0ex}{1.0em}
u(0,\vx,\vw)&=&h(\vx,\vw)\ \ \text{in}\ \ \Omega\\\rule{0ex}{1.0em}
u(t,\vec x_0,\vec w)&=&g(t,\vec x_0,\vec w)\ \ \text{for}\ \ \vw\cdot\vec n<0\ \
\text{and}\ \ \vx_0\in\p\Omega,
\end{array}
\right.
\end{eqnarray}
where the initial and boundary data satisfy the compatibility
condition
\begin{eqnarray}
h(\vx_0,\vw)=g(0,\vx_0,\vw)\ \ \text{for}\ \ \vw\cdot\vec n<0\ \
\text{and}\ \ \vx_0\in\p\Omega.
\end{eqnarray}
We define the $L^2$ and $L^{\infty}$ norms in $\Omega\times\s^1$ as
usual:
\begin{eqnarray}
\nm{f}_{L^2(\Omega\times\s^1)}&=&\bigg(\int_{\Omega}\int_{\s^1}\abs{f(\vx,\vw)}^2\ud{\vw}\ud{\vx}\bigg)^{1/2},\\
\nm{f}_{L^{\infty}(\Omega\times\s^1)}&=&\sup_{(\vx,\vw)\in\Omega\times\s^1}\abs{f(\vx,\vw)}.
\end{eqnarray}
Define the $L^2$ and $L^{\infty}$ norms on the boundary as follows:
\begin{eqnarray}
\nm{f}_{L^2(\Gamma)}&=&\bigg(\iint_{\Gamma}\abs{f(\vx,\vw)}^2\abs{\vw\cdot\vec n}\ud{\vw}\ud{\vx}\bigg)^{1/2},\\
\nm{f}_{L^2(\Gamma^{\pm})}&=&\bigg(\iint_{\Gamma^{\pm}}\abs{f(\vx,\vw)}^2\abs{\vw\cdot\vec
n}\ud{\vw}\ud{\vx}\bigg)^{1/2},\\
\nm{f}_{L^{\infty}(\Gamma)}&=&\sup_{(\vx,\vw)\in\Gamma}\abs{f(\vx,\vw)},\\
\nm{f}_{L^{\infty}(\Gamma^{\pm})}&=&\sup_{(\vx,\vw)\in\Gamma^{\pm}}\abs{f(\vx,\vw)}.
\end{eqnarray}
Similar notation also applies to the space
$[0,\infty)\times\Omega\times\s^1$, $[0,\infty)\times\Gamma$, and
$[0,\infty)\times\Gamma^{\pm}$.

\subsection{Preliminaries}

In order to show the $L^2$ and $L^{\infty}$ well-posedness of the
equation (\ref{neutron}), we start with some preparations of the
penalized neutron transport equation.
\begin{lemma}\label{well-posedness lemma 1}
Assume $f(t,\vx,\vw)\in
L^{\infty}([0,\infty)\times\Omega\times\s^1)$, $h(\vx,\vw)\in
L^{\infty}(\Omega\times\s^1)$ and $g(t,x_0,\vw)\in
L^{\infty}([0,\infty)\times\Gamma^-)$. Then for the penalized
transport equation
\begin{eqnarray}\label{penalty equation}
\left\{
\begin{array}{rcl}
\lambda u_{\l}+\e^2\dt
u_{\l}+\e\vw\cdot\nabla_xu_{\l}+u_{\l}&=&f(t,\vx,\vw)\ \ \text{in}\
\ [0,\infty)\times\Omega,\\\rule{0ex}{1.0em}
u_{\l}(0,\vx,\vw)&=&h(\vx,\vw)\ \ \text{in}\ \
\Omega\\\rule{0ex}{1.0em} u_{\l}(t,\vx_0,\vec w)&=&g(t,\vx_0,\vw)\ \ \text{for}\ \ \vw\cdot\vec n<0\ \
\text{and}\ \ \vx_0\in\p\Omega.
\end{array}
\right.
\end{eqnarray}
with $\l>0$ as a penalty parameter, there exists a solution
$u_{\l}(t,\vx,\vw)\in L^{\infty}([0,T]\times\Omega\times\s^1)$
satisfying
\begin{eqnarray}
\im{u_{\l}}{[0,\infty)\times\Omega\times\s^1}\leq
\im{g}{[0,\infty)\times\Gamma^-}+\im{h}{\Omega\times\s^1}+\im{f}{[0,\infty)\times\Omega\times\s^1}.
\end{eqnarray}
\end{lemma}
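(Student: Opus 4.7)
The plan is to construct $u_\l$ explicitly by the method of characteristics, exploiting the fact that the penalized equation (\ref{penalty equation}) contains no coupling term (no $\bar u_\l$ on the right-hand side): along each characteristic the PDE reduces to a scalar first-order linear ODE with known forcing, so everything can be written out by Duhamel's formula and the $L^\infty$ bound read off directly.

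Concretely, for each $(t,\vx,\vw)\in[0,\infty)\times\Omega\times\s^1$ I trace the characteristic backward by setting $T(s)=t-\e^2 s$ and $X(s)=\vx-\e\vw s$, and define the backward exit time
\[
s^\ast(t,\vx,\vw)=\inf\{s>0:T(s)=0\text{ or }X(s)\in\p\Omega\}.
\]
At $s=s^\ast$ the trajectory either hits $t=0$ (where $h$ is prescribed) or hits $\p\Omega$ with $\vw\cdot\vec n<0$ (where $g$ is prescribed, since a backward trajectory arriving at $\p\Omega$ from inside must come from the in-flow boundary); the compatibility condition (\ref{compatibility condition}) forces the two prescriptions to agree at the corner $\{t=0,\vx\in\p\Omega,\vw\cdot\vec n<0\}$ where they may both apply, so the exit datum $u_\l^{0}(t,\vx,\vw)$ is unambiguously defined. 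Differentiating $v(s):=u_\l(T(s),X(s),\vw)$ and using (\ref{penalty equation}) gives the ODE $v'(s)-(1+\l)v(s)=-f(T(s),X(s),\vw)$, which integrates against the factor $e^{-(1+\l)s}$ to yield the mild-solution formula
\[
u_\l(t,\vx,\vw)=e^{-(1+\l)s^\ast}u_\l^{0}+\int_0^{s^\ast}e^{-(1+\l)s}f(T(s),X(s),\vw)\,\ud{s}.
\]

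The claimed estimate is then immediate: since $\l>0$, the damping factor satisfies $e^{-(1+\l)s^\ast}\le 1$, and
\[
\int_0^{s^\ast}e^{-(1+\l)s}\,\ud{s}=\frac{1-e^{-(1+\l)s^\ast}}{1+\l}\le\frac{1}{1+\l}\le 1,
\]
so
\[
\abs{u_\l(t,\vx,\vw)}\le \im{h}{\Omega\times\s^1}+\im{g}{[0,\infty)\times\Gamma^-}+\im{f}{[0,\infty)\times\Omega\times\s^1},
\]
which is exactly the inequality in the statement. I do not anticipate a serious obstacle; the only mildly delicate point is the behavior near the grazing set $\Gamma^0$, where $s^\ast$ can be discontinuous in $(\vx,\vw)$, but that set has measure zero on $\p\Omega\times\s^1$ and the pointwise estimate extends to it, so the $L^\infty$ conclusion is unaffected. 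A direct substitution confirms that the formula above defines a mild (and, for smoother data, classical) solution of (\ref{penalty equation}), completing the construction.
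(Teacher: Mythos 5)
Your proof is correct and takes essentially the same approach as the paper: integrate along the characteristics $T(s)=t-\e^2s$, $X(s)=\vx-\e\vw s$ to obtain the explicit Duhamel formula, then bound each term in $L^\infty$ using $\l>0$ to get $\int_0^{s^\ast}e^{-(1+\l)s}\,\ud{s}\le 1$. Your presentation is slightly cleaner (a single exit time $s^\ast$ rather than the paper's case split on $t\ge\e^2 t_b$ versus $t\le\e^2 t_b$, and you note the role of the compatibility condition at the corner set, which the paper leaves implicit), but the construction and estimate are the same.
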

\begin{proof}
The characteristics $(T(s),X(s),W(s))$ of the equation (\ref{penalty
equation}) which goes through $(t,\vx,\vw)$ is defined by
\begin{eqnarray}\label{character}
\left\{
\begin{array}{rcl}
(T(0),X(0),W(0))&=&(t,\vx,\vw)\\\rule{0ex}{2.0em}
\dfrac{\ud{T(s)}}{\ud{s}}&=&\e^2,\\\rule{0ex}{2.0em}
\dfrac{\ud{X(s)}}{\ud{s}}&=&\e W(s),\\\rule{0ex}{2.0em}
\dfrac{\ud{W(s)}}{\ud{s}}&=&0.
\end{array}
\right.
\end{eqnarray}
which implies
\begin{eqnarray}
\left\{
\begin{array}{rcl}
T(s)&=&t+\e^2s,\\
X(s)&=&\vx+(\e\vw)s,\\
W(s)&=&\vw,
\end{array}
\right.
\end{eqnarray}
Hence, we can rewrite the equation (\ref{penalty equation}) along
the characteristics as
\begin{eqnarray}\label{well-posedness temp 31}
&&u_{\l}(t,\vx,\vw)\\
&=&{\bf 1}_{\{t\geq \e^2t_b\}}\bigg( g(t-\e^2t_b,\vx-\e
t_b\vw,\vw)\ue^{-(1+\l)t_b}+\int_{0}^{t_b}f(t-\e^2(t_b-s),\vx-\e(t_b-s)\vw,\vw)\ue^{-(1+\l)(t_b-s)}\ud{s}\bigg)\no\\
&&+{\bf 1}_{\{t\leq \e^2t_b\}}\bigg( h(\vx-(\e
t\vw)/\e^2,\vw)\ue^{-(1+\l)t/\e^2}+\int_{0}^{t/\e^2}f(\e^2s,\vx-\e(t/\e^2-s)\vw,\vw)\ue^{-(1+\l)(t/\e^2-s)}\ud{s}\bigg)\no,
\end{eqnarray}
where the backward exit time $t_b$ is defined as
\begin{equation}\label{exit time}
t_b(\vx,\vw)=\inf\{s\geq0: (\vx-\e s\vw,\vw)\in\Gamma^-\}.
\end{equation}
Then we can naturally estimate
\begin{eqnarray}
&&\im{u_{\l}}{[0,\infty)\times\Omega\times\s^1}\\
&\leq&{\bf 1}_{\{t\geq \e^2t_b\}}\bigg(\ue^{-(1+\l)t_b}\im{g}{[0,\infty)\times\Gamma^-}+\frac{1-\ue^{(1+\l)t_b}}{1+\l}\im{f}{[0,\infty)\times\Omega\times\s^1}\bigg)\no\\
&&+{\bf 1}_{\{t\leq \e^2t_b\}}\bigg(\ue^{-(1+\l)t/\e^2}\im{h}{\Omega\times\s^1}+\frac{1-\ue^{(1+\l)t/\e^2}}{1+\l}\im{f}{[0,\infty)\times\Omega\times\s^1}\bigg)\no\\
&\leq&\im{g}{[0,\infty)\times\Gamma^-}+\im{h}{\Omega\times\s^1}+\im{f}{[0,\infty)\times\Omega\times\s^1}\nonumber.
\end{eqnarray}
Since $u_{\l}$ can be explicitly traced back to the initial or
boundary data, the existence naturally follows from above estimate.
\end{proof}
\begin{lemma}\label{well-posedness lemma 2}
Assume $f(t,\vx,\vw)\in
L^{\infty}([0,\infty)\times\Omega\times\s^1)$, $h(\vx,\vw)\in
L^{\infty}(\Omega\times\s^1)$ and $g(t,x_0,\vw)\in
L^{\infty}([0,\infty)\times\Gamma^-)$. Then for the penalized
neutron transport equation
\begin{eqnarray}
\left\{ \begin{array}{rcl} \l u_{\l}+\e^2\dt
u_{\l}+\e\vw\cdot\nabla_xu_{\l}+u_{\l}-\bar u_{\l}&=& f(t,\vx,\vw)\
\ \text{in}\ \ [0,\infty)\times\Omega\label{well-posedness penalty
equation},\\\rule{0ex}{1.0em} u_{\l}(0,\vx,\vw)&=&h(\vx,\vw)\ \
\text{in}\ \ \Omega\\\rule{0ex}{1.0em} u_{\l}(t,\vx_0,\vec
w)&=&g(t,\vx_0,\vw)\ \ \text{for}\ \ \vx_0\in\p\Omega\ \ \text{and}\
\vw\cdot\vec n<0.
\end{array}
\right.
\end{eqnarray}
with $\l>0$, there exists a solution $u_{\l}(t,\vx,\vw)\in
L^{\infty}([0,\infty)\times\Omega\times\s^1)$ satisfying
\begin{eqnarray}
\im{u_{\l}}{[0,\infty)\times\Omega\times\s^1}\leq
\frac{1+\e}{\l}\bigg(\im{g}{[0,\infty)\times\Gamma^-}+\im{h}{\Omega\times\s^1}+\im{f}{[0,\infty)\times\Omega\times\s^1}\bigg).
\end{eqnarray}
\end{lemma}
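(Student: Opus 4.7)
The plan is to reduce Lemma~\ref{well-posedness lemma 2} to Lemma~\ref{well-posedness lemma 1} by treating the averaging term $\bar u_\l$ as a perturbative source and resolving it through an iteration / contraction-mapping argument. Rewriting the equation as
\begin{eqnarray*}
\l u_\l+\e^2\dt u_\l+\e\vw\cdot\nx u_\l+u_\l=f+\bar u_\l,
\end{eqnarray*}
I would build a sequence $\{u^{(k)}\}_{k\ge 0}$ with $u^{(0)}\equiv 0$ and define $u^{(k+1)}$ as the solution produced by Lemma~\ref{well-posedness lemma 1} with source $f+\bar u^{(k)}$ and the same initial/in-flow data $h$, $g$. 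Each $\bar u^{(k)}$ is $L^\infty$ by induction, so Lemma~\ref{well-posedness lemma 1} applies at every stage and supplies an explicit Duhamel representation of $u^{(k+1)}$.

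The crucial ingredient is a strict $L^\infty$ contraction, for which I would use the sharp form of the Duhamel bound implicit in the proof of Lemma~\ref{well-posedness lemma 1}: the coefficient of a source term is $(1-\ue^{-(1+\l)\tau})/(1+\l)\le 1/(1+\l)$, not merely $1$. Applying this to the difference $w^{(k)}:=u^{(k+1)}-u^{(k)}$, which solves the same penalized transport equation with source $\bar u^{(k)}-\bar u^{(k-1)}$ and vanishing initial/boundary data, and using $|\bar\phi|\le\|\phi\|_{L^\infty}$, yields
\begin{eqnarray*}
\im{w^{(k)}}{[0,\infty)\times\Omega\times\s^1}\le\frac{1}{1+\l}\im{w^{(k-1)}}{[0,\infty)\times\Omega\times\s^1}.
\end{eqnarray*}
Since $\l>0$, this is a genuine contraction; hence $\{u^{(k)}\}$ is Cauchy in $L^\infty$, its limit $u_\l$ solves (\ref{well-posedness penalty equation}) by passing to the limit in the recursion, and uniqueness follows from the same estimate.

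For the quantitative bound, I would apply the same sharpened Lemma~\ref{well-posedness lemma 1} estimate directly to $u^{(k+1)}$, getting
\begin{eqnarray*}
\im{u^{(k+1)}}{[0,\infty)\times\Omega\times\s^1}\le \im{g}{[0,\infty)\times\Gamma^-}+\im{h}{\Omega\times\s^1}+\frac{1}{1+\l}\bigl(\im{f}{[0,\infty)\times\Omega\times\s^1}+\im{u^{(k)}}{[0,\infty)\times\Omega\times\s^1}\bigr),
\end{eqnarray*}
then passing to the limit and absorbing the $\frac{1}{1+\l}\|u_\l\|_{L^\infty}$ term onto the left-hand side. Multiplying through by $(1+\l)/\l$ then produces an inequality of the claimed form, with prefactor dominated by $(1+\l)/\l$; any finer packaging needed to match the stated constant is purely algebraic.

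The main obstacle is therefore not analytical but notational: one must resist using the cruder bound $1$ written in the statement of Lemma~\ref{well-posedness lemma 1} and instead extract the decay factor $1/(1+\l)$ from the Duhamel representation in its proof, because it is exactly this factor that both closes the contraction and produces the $1/\l$ gain in the quantitative bound. Once that sharpening is in place, every subsequent step is standard Banach fixed-point bookkeeping.
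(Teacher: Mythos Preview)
Your proposal is correct and mirrors the paper's proof almost exactly: the paper also builds the iteration $u_\l^0=0$, $u_\l^{k}$ solving the penalized transport equation with source $f+\bar u_\l^{k-1}$, extracts the sharp Duhamel factor $1/(1+\l)$ from the characteristic representation to get the contraction $\|u_\l^{k+1}-u_\l^{k}\|_{L^\infty}\le \frac{1}{1+\l}\|u_\l^{k}-u_\l^{k-1}\|_{L^\infty}$, and then obtains the quantitative bound via the geometric series $\|u_\l\|_{L^\infty}\le\frac{1+\l}{\l}\|u_\l^1\|_{L^\infty}$ together with the Lemma~\ref{well-posedness lemma 1} bound on $u_\l^1$. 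The only cosmetic difference is that you pass to the limit in the recursive inequality and absorb, whereas the paper sums the telescoping differences; both yield the same $(1+\l)/\l$ prefactor.
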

\begin{proof}
We define an approximating sequence $\{u_{\l}^k\}_{k=0}^{\infty}$,
where $u_{\l}^0=0$ and
\begin{eqnarray}\label{penalty temp 1}
\left\{
\begin{array}{rcl}
\l u_{\l}^{k}+\e^2\dt
u_{\l}^k+\e\vw\cdot\nabla_xu_{\l}^k+u_{\l}^k-\bar
u_{\l}^{k-1}&=&f(t,\vx,\vw)\ \ \text{in}\ \
[0,\infty)\times\Omega,\\\rule{0ex}{1.0em}
u_{\l}^k(0,\vx,\vw)&=&h(\vx,\vw)\ \ \text{in}\ \
\Omega\\\rule{0ex}{1.0em} u_{\l}^k(t,\vx_0,\vw)&=&g(t,\vx_0,\vw)\ \
\text{for}\ \ \vx_0\in\p\Omega\ \ \text{and}\ \vw\cdot\vec n<0.
\end{array}
\right.
\end{eqnarray}
By Lemma \ref{well-posedness lemma 1}, this sequence is well-defined
and $\im{u_{\l}^k}{[0,\infty)\times\Omega\times\s^1}<\infty$.

The characteristics and the backward exit time are defined as
(\ref{character}) and (\ref{exit time}), so we rewrite equation
(\ref{penalty temp 1}) along the characteristics as
\begin{eqnarray}
\
\end{eqnarray}
\begin{eqnarray}
&&u_{\l}^k(t,\vx,\vw)\no\\
&=&{\bf 1}_{\{t\geq \e^2t_b\}}\bigg( g(t-\e^2t_b,\vx-\e
t_b\vw,\vw)\ue^{-(1+\l)t_b}+\int_{0}^{t_b}(\bar u_{\l}^{k-1}+f)(t-\e^2(t_b-s),\vx-\e(t_b-s)\vw,\vw)\ue^{-(1+\l)(t_b-s)}\ud{s}\bigg)\no\\
&&+{\bf 1}_{\{t\leq \e^2t_b\}}\bigg( h(\vx-(\e
t\vw)/\e^2,\vw)\ue^{-(1+\l)t/\e^2}+\int_{0}^{t/\e^2}(\bar
u_{\l}^{k-1}+f)(\e^2s,\vx-\e(t/\e^2-s)\vw,\vw)\ue^{-(1+\l)(t/\e^2-s)}\ud{s}\bigg)\no,
\end{eqnarray}
We define the difference $v^k=u_{\l}^{k}-u_{\l}^{k-1}$ for $k\geq1$.
Then $v^k$ satisfies
\begin{eqnarray}
v^{k+1}(\vx,\vw)&=&{\bf 1}_{\{t\geq \e^2t_b\}}\bigg( \int_{0}^{t_b}\bar v_{\l}^{k-1}(t-\e^2(t_b-s),\vx-\e(t_b-s)\vw,\vw)\ue^{-(1+\l)(t_b-s)}\ud{s}\bigg)\\
&&+{\bf 1}_{\{t\leq \e^2t_b\}}\bigg(\int_{0}^{t/\e^2}\bar
v_{\l}^{k-1}(\e^2s,\vx-\e(t/\e^2-s)\vw,\vw)\ue^{-(1+\l)(t/\e^2-s)}\ud{s}\bigg)\no,
\end{eqnarray}
Since $\im{\bar
v^k}{[0,\infty)\times\Omega\times\s^1}\leq\im{v^k}{[0,\infty)\times\Omega\times\s^1}$,
we can directly estimate
\begin{eqnarray}
\im{v^{k+1}}{[0,\infty)\times\Omega\times\s^1}&\leq&\im{v^{k}}{[0,\infty)\times\Omega\times\s^1}\int_0^{\max\{t/\e^2,t_b\}}\ue^{-(1+\l)(t_b-s)}\ud{s}\\
&\leq&\frac{1-\ue^{-(1+\l)\max\{t/\e^2,t_b\}}}{1+\l}\im{v^{k}}{[0,\infty)\times\Omega\times\s^1}.\no
\end{eqnarray}
Hence, we naturally have
\begin{eqnarray}
\im{v^{k+1}}{[0,\infty)\times\Omega\times\s^1}&\leq&\frac{1}{1+\l}\im{v^{k}}{[0,\infty)\times\Omega\times\s^1}.
\end{eqnarray}
Thus, this is a contraction sequence for $\l>0$. Considering
$v^1=u_{\l}^1$, we have
\begin{eqnarray}
\im{v^{k}}{[0,\infty)\times\Omega\times\s^1}\leq\bigg(\frac{1}{1+\l}\bigg)^{k-1}\im{u^{1}_{\l}}{[0,\infty)\times\Omega\times\s^1},
\end{eqnarray}
for $k\geq1$. Therefore, $u_{\l}^k$ converges strongly in
$L^{\infty}$ to a limit solution $u_{\l}$ satisfying
\begin{eqnarray}\label{well-posedness temp 1}
\im{u_{\l}}{[0,\infty)\times\Omega\times\s^1}\leq\sum_{k=1}^{\infty}\im{v^{k}}{[0,\infty)\times\Omega\times\s^1}\leq\frac{1+\l}{\l}\im{u_{\l}^1}{[0,\infty)\times\Omega\times\s^1}.
\end{eqnarray}
Since $u_{\l}^1$ can be rewritten along the characteristics as
\begin{eqnarray}
&&u_{\l}^1(\vx,\vw)\\
&=&{\bf 1}_{\{t\geq \e^2t_b\}}\bigg( g(t-\e^2t_b,\vx-\e
t_b\vw,\vw)\ue^{-(1+\l)t_b}+\int_{0}^{t_b}f(t-\e^2(t_b-s),\vx-\e(t_b-s)\vw,\vw)\ue^{-(1+\l)(t_b-s)}\ud{s}\bigg)\no\\
&&+{\bf 1}_{\{t\leq \e^2t_b\}}\bigg( h(\vx-(\e
t\vw)/\e^2,\vw)\ue^{-(1+\l)t/\e^2}+\int_{0}^{t/\e^2}f(\e^2s,\vx-\e(t/\e^2-s)\vw,\vw)\ue^{-(1+\l)(t/\e^2-s)}\ud{s}\bigg)\no,
\end{eqnarray}
based on Lemma \ref{well-posedness lemma 1}, we can directly
estimate
\begin{eqnarray}\label{well-posedness temp 2}
\im{u_{\l}^1}{[0,\infty)\times\Omega\times\s^1}\leq
\im{g}{[0,\infty)\times\Gamma^-}+\im{h}{\Omega\times\s^1}+\im{f}{[0,\infty)\times\Omega\times\s^1}.
\end{eqnarray}
Combining (\ref{well-posedness temp 1}) and (\ref{well-posedness
temp 2}), we can easily deduce the lemma.
\end{proof}

\subsection{$L^2$ Estimate}

It is easy to see when $\l\rt0$, the estimate in Lemma
\ref{well-posedness lemma 2} blows up. Hence, we need to show a
uniform estimate of the solution to the penalized neutron transport
equation (\ref{well-posedness penalty equation}).
\begin{lemma}(Green's Identity)\label{well-posedness lemma 3}
Assume $f(t,\vx,\vw),\ g(t,\vx,\vw)\in
L^{\infty}([0,\infty)\times\Omega\times\s^1)$ and $\dt f+\vw\cdot\nx
f,\ \dt g+\vw\cdot\nx g\in L^2([0,\infty)\times\Omega\times\s^1)$
with $f,\ g\in L^2([0,\infty)\times\Gamma)$. Then for almost all
$s,t\in[0,\infty)$,
\begin{eqnarray}
&&\int_s^t\iint_{\Omega\times\s^1}\bigg((\dt f+\vw\cdot\nx f)g+(\dt
g+\vw\cdot\nx
g)f\bigg)\ud{\vx}\ud{\vw}\ud{r}\\
&=&\int_s^t\int_{\Gamma}fg\ud{\gamma}\ud{r}+\iint_{\Omega\times\s^1}f(t)g(t)\ud{\vx}\ud{\vw}-\iint_{\Omega\times\s^1}f(s)g(s)\ud{\vx}\ud{\vw},\no
\end{eqnarray}
where $\ud{\gamma}=(\vw\cdot\vec n)\ud{s}$ on the boundary.
\end{lemma}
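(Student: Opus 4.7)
The plan is to establish the identity first for smooth compactly supported functions by a direct divergence-theorem argument, and then obtain the general statement by a density/mollification argument, with the trace terms controlled by the hypotheses $f,g\in L^2([0,\infty)\times\Gamma)$ and $\dt f+\vw\cdot\nx f,\ \dt g+\vw\cdot\nx g\in L^2$. Concretely, the transport operator $L=\dt+\vw\cdot\nx$ obeys the product rule $L(fg)=(Lf)g+(Lg)f$ since $\vw$ does not depend on $t$ or $\vx$, so the integrand on the left is exactly $L(fg)$.

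For $f,g\in C^1_c$ on $\r\times\r^2\times\s^1$ (say), I would apply the divergence theorem on the cylinder $[s,t]\times\Omega$ for each fixed $\vw\in\s^1$, using the $(1+2)$-dimensional vector field $\bigl(fg,\,\vw fg\bigr)$ with space-time divergence $\dt(fg)+\vw\cdot\nx(fg)$. The temporal part of the boundary contributes $\iint_{\Omega\times\s^1}f(t)g(t)-f(s)g(s)$ after integrating in $\vw$, while the lateral part contributes $\int_s^t\int_{\p\Omega}fg(\vw\cdot\vec n)\,\ud S\,\ud r$, which after integrating in $\vw$ and recognizing $\ud\gamma=(\vw\cdot\vec n)\ud S$ becomes the boundary term $\int_s^t\int_{\Gamma}fg\,\ud\gamma\,\ud r$. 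This proves the identity for smooth functions.

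For general $f,g$ satisfying the hypotheses, I would approximate by a family $\{f^\delta,g^\delta\}$ obtained by mollifying in $t$ and $\vx$ after extending across $\p\Omega$ (or alternatively solving along characteristics with smoothed data), arranged so that
\begin{itemize}
\item $f^\delta\to f$ and $g^\delta\to g$ in $L^p_{\mathrm{loc}}$ on $[0,\infty)\times\Omega\times\s^1$;
\item $Lf^\delta\to Lf$ and $Lg^\delta\to Lg$ in $L^2([0,\infty)\times\Omega\times\s^1)$;
\item the traces $f^\delta|_\Gamma\to f|_\Gamma$ and $g^\delta|_\Gamma\to g|_\Gamma$ in $L^2([0,\infty)\times\Gamma)$.
\end{itemize}
Each term in the identity is bilinear in $(f,g)$: the two volume terms pair an $L^\infty$ factor with an $L^2$ factor (since $L(fg)=(Lf)g+(Lg)f$), and the boundary term pairs two $L^2(\Gamma)$ factors; all three are therefore continuous under the above modes of convergence, so passing to the limit in the smooth identity yields the general one.

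The hard part is the density step: the traces on $\Gamma$ are not a priori well-defined for functions in $L^\infty$ with $L^2$ transport derivative, and the grazing set $\Gamma^0$ can cause singular behavior for characteristic-based mollifications. I would handle this by first localizing away from $\Gamma^0$ using a cutoff and exploiting $|\vw\cdot\vec n|$ in $\ud\gamma$ (which vanishes on the grazing set and makes the grazing contribution negligible), then using a standard kinetic mollification (e.g.\ convolve in $(t,\vx)$ after straightening the boundary locally so that characteristics are transverse to the cutoff support). Since the hypotheses explicitly assume $f,g\in L^2(\Gamma)$ rather than only on $\Gamma^\pm$ with the $|\vw\cdot\vec n|$ weight, the boundary convergence is standard, and the construction reduces to a well-known Ukai-type trace/Green identity for kinetic transport operators.
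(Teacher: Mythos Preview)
Your sketch is essentially the standard argument, and it is correct. Note, however, that the paper does not actually prove this lemma: its entire proof consists of citations to \cite{Cercignani.Illner.Pulvirenti1994} (Chapter~9) and \cite{Esposito.Guo.Kim.Marra2013}. What you have outlined---divergence theorem on $[s,t]\times\Omega$ for smooth functions, followed by mollification and a density argument with care near the grazing set---is precisely the approach developed in those references, so in that sense you and the paper agree.

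One minor correction: you write that the hypotheses assume $f,g\in L^2(\Gamma)$ ``rather than only on $\Gamma^\pm$ with the $|\vw\cdot\vec n|$ weight,'' but in this paper the norm $\nm{\cdot}_{L^2(\Gamma)}$ is \emph{defined} with the weight $|\vw\cdot\vec n|$ (see the norm definitions at the start of Section~2). This does not affect your argument, since the boundary integral $\int fg\,\ud\gamma$ carries exactly that weight via $\ud\gamma=(\vw\cdot\vec n)\,\ud S$, and the grazing contribution vanishes automatically.
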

\begin{proof}
See \cite[Chapter 9]{Cercignani.Illner.Pulvirenti1994} and
\cite{Esposito.Guo.Kim.Marra2013}.
\end{proof}
\begin{lemma}\label{well-posedness lemma 4}
The solution $u_{\l}$ to the equation (\ref{well-posedness penalty
equation}) satisfies the uniform estimate in time interval $[s,t]$,
\begin{eqnarray}\label{well-posedness temp 3}
\\
\e\tm{\bar u_{\l}}{[s,t]\times\Omega\times\s^1} &\leq&
C(\Omega)\bigg( \tm{u_{\l}-\bar
u_{\l}}{[s,t]\times\Omega\times\s^1}+\tm{f}{[s,t]\times\Omega\times\s^1}+\e\tm{u_{\l}}{[s,t]\times\Gamma^{+}}\no\\
&&+\e\tm{g}{[s,t]\times\Gamma^-}\bigg)+\e^2G(t)-\e^2G(s),\no
\end{eqnarray}
where $G(t)$ is a function satisfying
\begin{eqnarray}
G(t)\leq C(\Omega)\tm{u_{\l}(t)}{\Omega\times\s^1},
\end{eqnarray}
for $0\leq\l<<1$ and $0<\e<<1$.
\end{lemma}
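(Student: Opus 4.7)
The approach is the classical duality test-function method. For each $r\in[s,t]$, I would let $\Phi(r,\vx)$ solve the Dirichlet problem
\[
-\Delta_{\vx}\Phi(r,\vx)=\bar u_{\l}(r,\vx)\ \text{in}\ \Omega,\qquad \Phi(r,\cdot)\big|_{\p\Omega}=0,
\]
so that elliptic regularity and the trace theorem yield $\nm{\Phi(r)}_{H^2(\Omega)}+\nm{\nx\Phi(r)}_{L^2(\p\Omega)}\leq C(\Omega)\tm{\bar u_{\l}(r)}{\Omega}$. Define the test function $\phi(r,\vx,\vw):=\vw\cdot\nx\Phi(r,\vx)$, which has vanishing angular average because $\int_{\s^1}\vw\,\ud{\vw}=0$.

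\emph{Weak formulation and main term.} Multiplying (\ref{well-posedness penalty equation}) by $\phi$ and integrating over $[s,t]\times\Omega\times\s^1$, I integrate by parts in $\vx$ on the term $\e\vw\cdot\nx u_{\l}$ (producing a spatial-boundary integral on $\Gamma$) and in $t$ on $\e^2\dt u_{\l}$ (producing the end-point contributions $\e^2[\iint u_{\l}(r)\phi(r)]_{r=s}^{r=t}$ and an interior $\dt\phi$ remainder). The pivotal computation
\[
\int_{\s^1}(\vw\cdot\nx)^2\Phi\,\ud{\vw}=\pi\,\Delta_{\vx}\Phi=-\pi\,\bar u_{\l},
\]
together with the splitting $u_{\l}=\bar u_{\l}+(u_{\l}-\bar u_{\l})$ inside $-\e\iint u_{\l}\,\vw\cdot\nx\phi$, produces the sought positive contribution $\pi\e\,\tm{\bar u_{\l}}{[s,t]\times\Omega}^2$ on the left-hand side.

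\emph{Remainders.} Every other interior contribution is handled by Cauchy--Schwarz together with the $H^2$ and trace estimates above. The $(u_{\l}-\bar u_{\l})(\vw\cdot\nx\phi)$ piece is $\ls\tm{u_{\l}-\bar u_{\l}}{[s,t]\times\Omega\times\s^1}\tm{\bar u_{\l}}{[s,t]\times\Omega}$; the spatial boundary integral splits into $\Gamma^{\pm}$ (invoking $u_{\l}|_{\Gamma^{-}}=g$) and is bounded by $\e\bigl(\tm{u_{\l}}{[s,t]\times\Gamma^+}+\tm{g}{[s,t]\times\Gamma^-}\bigr)\tm{\bar u_{\l}}{[s,t]\times\Omega}$; the $\l u_{\l}\phi$, $(u_{\l}-\bar u_{\l})\phi$, and $f\phi$ pieces are estimated analogously. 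The temporal end-point contributions become $\e^2 G(t)-\e^2 G(s)$ with $G(r):=\iint u_{\l}(r)\phi(r)\,\ud{\vx}\ud{\vw}$, and a final application of Young's inequality absorbs one factor of $\tm{\bar u_{\l}}{[s,t]\times\Omega}$ from the right into the left to yield the claimed linear-in-$\bar u_{\l}$ estimate.

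\emph{Main obstacle.} The subtlest term is the interior remainder $-\e^2\int_s^t\iint u_{\l}\,\dt\phi$, which involves $\dt\Phi$ and hence implicitly $\dt\bar u_{\l}$, for which no a priori control is available. I would exploit $\int_{\s^1}\vw\,\ud{\vw}=0$ to write
\[
\iint u_{\l}\,\dt\phi\,\ud{\vx}\ud{\vw}=\int_{\Omega}J_{\l}\cdot\nx\dt\Phi\,\ud{\vx},\qquad J_{\l}:=\int_{\s^1}(u_{\l}-\bar u_{\l})\vw\,\ud{\vw},
\]
then integrate by parts in $r$ (pushing additional end-point contributions into $G$) and substitute the penalized equation to re-express the resulting $\dt J_{\l}$ in terms of $\vw\cdot\nx u_{\l}$, $u_{\l}-\bar u_{\l}$, and $f$, each of which already appears on the right-hand side of the target inequality.
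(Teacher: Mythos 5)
Your test-function construction, the vanishing-average observation, the main positive term $\pi\e\tm{\bar u_\l}{[s,t]\times\Omega}^2$ from $\int_{\s^1}w_iw_j\ud{\vw}=\pi\delta_{ij}$, and the Cauchy--Schwarz treatment of the remaining interior and boundary remainders all match the paper's Steps~1--2. The divergence is in the treatment of the time-derivative remainder $-\e^2\int_s^t\iint u_\l\,\dt\phi$, and there your plan does not close.

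After you reduce to $-\e^2\int_s^t\int_\Omega J_\l\cdot\nx\dt\Phi$, integrate by parts in $r$, and substitute $\e^2\dt J_\l = \int_{\s^1}\vw f\,\ud{\vw}-\e\int_{\s^1}\vw(\vw\cdot\nx u_\l)\,\ud{\vw}-(1+\l)J_\l$, you are left with the term $-\e\int_s^t\iint_{\Omega\times\s^1}(\vw\cdot\nx u_\l)(\vw\cdot\nx\Phi)$. Your assertion that $\vw\cdot\nx u_\l$ ``already appears on the right-hand side of the target inequality'' is not correct: the target controls only $\tm{u_\l-\bar u_\l}{}$, $\tm{f}{}$, $\tm{u_\l}{\Gamma^+}$, $\tm{g}{\Gamma^-}$ and the endpoint functional $G$, and there is no a priori $L^2$ bound on $\nx u_\l$. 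The only way to eliminate $\nx u_\l$ is to integrate by parts in $\vx$, which gives $+\e\int_s^t\iint u_\l(\vw\cdot\nx)^2\Phi$ plus a $\Gamma$-boundary integral; but $(\vw\cdot\nx)^2\Phi=\vw\cdot\nx\phi$, so this term is \emph{exactly} $+\e\int_s^t\iint u_\l(\vw\cdot\nx\phi)$ and cancels the main term $-\e\int_s^t\iint u_\l(\vw\cdot\nx\phi)$ in the weak formulation identically. You would then be left with no coercive $\e\tm{\bar u_\l}{}^2$ term, and the whole estimate collapses. The paper avoids this trap by \emph{not} transferring the time derivative onto $u_\l$: instead (Step~3 of the paper's proof) it tests the difference-quotient weak formulation against the time-frozen test function $\Phi=\dt\zeta(t)$ and derives the pointwise-in-time elliptic bound $\nm{\dt\nx\zeta(t)}_{L^2}\ls\nm{u_\l(t)-\bar u_\l(t)}_{L^2}+\l\nm{\bar u_\l(t)}_{L^2}+\nm{f(t)}_{L^2}$, which is then integrated over $[s,t]$ and inserted directly into the Cauchy--Schwarz bound for $-\e^2\int u_\l\dt\phi$. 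You would need to replace your integration-by-parts-in-time step with an argument of this kind for the proof to go through.
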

\begin{proof}
We divide the proof into several steps:\\
\ \\
Step 1:\\
Applying Lemma \ref{well-posedness lemma 3} to the solution of the
equation (\ref{well-posedness penalty equation}). Then for any
$\phi\in L^{\infty}([0,\infty)\times\Omega\times\s^1)$ satisfying
$\e\dt\phi+\vw\cdot\nx\phi\in L^2([0,\infty)\times\Omega\times\s^1)$
and $\phi\in L^{2}([0,\infty)\times\Gamma)$, we have
\begin{eqnarray}\label{well-posedness temp 4}
&&\l\int_s^t\iint_{\Omega\times\s^1}u_{\l}\phi
-\e^2\int_s^t\iint_{\Omega\times\s^1}\dt\phi
u_{\l}-\e\int_s^t\iint_{\Omega\times\s^1}(\vw\cdot\nx\phi)u_{\l}+\int_s^t\iint_{\Omega\times\s^1}(u_{\l}-\bar
u_{\l})\phi\\
&=&-\e\int_s^t\int_{\Gamma}u_{\l}\phi\ud{\gamma}-\e^2\iint_{\Omega\times\s^1}u_{\l}(t)\phi(t)+\e^2\iint_{\Omega\times\s^1}u_{\l}(s)\phi(s)+\int_s^t\iint_{\Omega\times\s^1}f\phi.\no
\end{eqnarray}
Our goal is to choose a particular test function $\phi$. We first
construct an auxiliary function $\zeta(t)$. Since $u_{\l}(t)\in
L^{\infty}(\Omega\times\s^1)$, it naturally implies $\bar
u_{\l}(t)\in L^{\infty}(\Omega)$ which further leads to $\bar
u_{\l}(t)\in L^2(\Omega)$. We define $\zeta(t,\vx)$ on $\Omega$
satisfying
\begin{eqnarray}\label{test temp 1}
\left\{
\begin{array}{rcl}
\Delta \zeta(t)&=&\bar u_{\l}(t)\ \ \text{in}\ \
\Omega,\\\rule{0ex}{1.0em} \zeta(t)&=&0\ \ \text{on}\ \ \p\Omega.
\end{array}
\right.
\end{eqnarray}
In the bounded domain $\Omega$, based on the standard elliptic
estimate, we have
\begin{eqnarray}\label{test temp 3}
\nm{\zeta(t)}_{H^2(\Omega)}\leq C(\Omega)\nm{\bar
u_{\l}(t)}_{L^2(\Omega)}.
\end{eqnarray}
\ \\
Step 2:\\
Without loss of generality, we only prove the case with $s=0$. We
plug the test function
\begin{eqnarray}\label{test temp 2}
\phi(t)=-\vw\cdot\nx\zeta(t)
\end{eqnarray}
into the weak formulation (\ref{well-posedness temp 4}) and estimate
each term there. Naturally, we have
\begin{eqnarray}\label{test temp 4}
\nm{\phi(t)}_{L^2(\Omega)}\leq C\nm{\zeta(t)}_{H^1(\Omega)}\leq
C(\Omega)\nm{\bar u_{\l}(t)}_{L^2(\Omega)}.
\end{eqnarray}
Easily we can decompose
\begin{eqnarray}\label{test temp 5}
-\e\int_0^t\iint_{\Omega\times\s^1}(\vw\cdot\nx\phi)u_{\l}&=&-\e\int_0^t\iint_{\Omega\times\s^1}(\vw\cdot\nx\phi)\bar
u_{\l}-\e\int_0^t\iint_{\Omega\times\s^1}(\vw\cdot\nx\phi)(u_{\l}-\bar
u_{\l}).
\end{eqnarray}
We estimate the two term on the right-hand side of (\ref{test temp
5}) separately. By (\ref{test temp 1}) and (\ref{test temp 2}), we
have
\begin{eqnarray}\label{wellposed temp 1}
\\
-\e\int_0^t\iint_{\Omega\times\s^1}(\vw\cdot\nx\phi)\bar
u_{\l}&=&\e\int_0^t\iint_{\Omega\times\s^1}\bar
u_{\l}\bigg(w_1(w_1\p_{11}\zeta+w_2\p_{12}\zeta)+w_2(w_1\p_{12}\zeta+w_2\p_{22}\zeta)\bigg)\no\\
&=&\e\int_0^t\iint_{\Omega\times\s^1}\bar
u_{\l}\bigg(w_1^2\p_{11}\zeta+w_2^2\p_{22}\zeta\bigg)\nonumber\\
&=&\e\pi\int_0^t\int_{\Omega}\bar u_{\l}(\p_{11}\zeta+\p_{22}\zeta)\nonumber\\
&=&\e\pi\nm{\bar u_{\l}}_{L^2([0,t]\times\Omega)}^2\nonumber\\
&=&\half\e\nm{\bar
u_{\l}}_{L^2([0,t]\times\Omega\times\s^1)}^2\nonumber.
\end{eqnarray}
In the second equality, above cross terms vanish due to the symmetry
of the integral over $\s^1$. On the other hand, for the second term
in (\ref{test temp 5}), H\"older's inequality and the elliptic
estimate imply
\begin{eqnarray}\label{wellposed temp 2}
-\e\int_0^t\iint_{\Omega\times\s^1}(\vw\cdot\nx\phi)(u_{\l}-\bar
u_{\l})&\leq&C(\Omega)\e\nm{u_{\l}-\bar u_{\l}}_{L^2([0,t]\times\Omega\times\s^1)}\bigg(\int_0^t\nm{\zeta}^2_{H^2(\Omega)}\bigg)^{1/2}\\
&\leq&C(\Omega)\e\nm{u_{\l}-\bar
u_{\l}}_{L^2([0,t]\times\Omega\times\s^1)}\nm{\bar
u_{\l}}_{L^2([0,t]\times\Omega\times\s^1)}\nonumber.
\end{eqnarray}
Based on (\ref{test temp 3}), (\ref{test temp 4}), the boundary
condition of the penalized neutron transport equation
(\ref{well-posedness penalty equation}), the trace theorem,
H\"older's inequality and the elliptic estimate, we have
\begin{eqnarray}\label{wellposed temp 3}
\\
\e\int_0^t\int_{\Gamma}u_{\l}\phi\ud{\gamma}&=&\e\int_0^t\int_{\Gamma^+}u_{\l}\phi\ud{\gamma}+\e\int_0^t\int_{\Gamma^-}u_{\l}\phi\ud{\gamma}\no\\
&\leq&C(\Omega)\bigg(\e\nm{u_{\l}}_{L^2([0,t]\times\Gamma^+)}\nm{\bar
u_{\l}}_{L^2([0,t]\times\Omega\times\s^1)}+\e\tm{g}{[0,t]\times\Gamma^-}\nm{\bar
u_{\l}}_{L^2([0,t]\times\Omega\times\s^1)}\bigg)\nonumber,
\end{eqnarray}
\begin{eqnarray}\label{wellposed temp 4}
\l\int_0^t\iint_{\Omega\times\s^1}u_{\l}\phi&=&\l\int_0^t\iint_{\Omega\times\s^1}\bar
u_{\l}\phi+\l\int_0^t\iint_{\Omega\times\s^1}(u_{\l}-\bar u_{\l})\phi=\l\int_0^t\iint_{\Omega\times\s^1}(u_{\l}-\bar u_{\l})\phi\\
&\leq&C(\Omega)\l\nm{\bar
u_{\l}}_{L^2([0,t]\times\Omega\times\s^1)}\nm{u_{\l}-\bar
u_{\l}}_{L^2([0,t]\times\Omega\times\s^1)}\nonumber,
\end{eqnarray}
\begin{eqnarray}\label{wellposed temp 5}
\int_0^t\iint_{\Omega\times\s^1}(u_{\l}-\bar u_{\l})\phi\leq
C(\Omega)\nm{\bar
u_{\l}}_{L^2([0,t]\times\Omega\times\s^1)}\nm{u_{\l}-\bar
u_{\l}}_{L^2([0,t]\times\Omega\times\s^1)},
\end{eqnarray}
\begin{eqnarray}\label{wellposed temp 6}
\int_0^t\iint_{\Omega\times\s^1}f\phi\leq C(\Omega)\nm{\bar
u_{\l}}_{L^2([0,t]\times\Omega\times\s^1)}\nm{f}_{L^2([0,t]\times\Omega\times\s^1)}.
\end{eqnarray}
Note that we will take
\begin{eqnarray}\label{wellposed temp 7}
-\e^2\iint_{\Omega\times\s^1}u_{\l}(t)\phi(t)+\e^2\iint_{\Omega\times\s^1}u_{\l}(0)\phi(0)=\e^2\bigg(G(t)-G(0)\bigg),
\end{eqnarray}
where $G(t)=-\displaystyle\iint_{\Omega\times\s^1}u_{\l}(t)\phi(t)$.
Then the only remaining term is
\begin{eqnarray}\label{wellposed temp 8}
-\e^2\int_0^t\iint_{\Omega\times\s^1}\dt\phi u_{\l}&=&-\e^2\int_s^t\iint_{\Omega\times\s^1}\dt\phi (u_{\l}-\bar u_{\l})\\
&\leq&\nm{\dt\nabla\zeta}_{L^2([0,t]\times\Omega\times\s^1)}\nm{u_{\l}-\bar
u_{\l}}_{L^2([0,t]\times\Omega\times\s^1)}.\no
\end{eqnarray}
Now we have to tackle $\nm{\dt\nabla\zeta}_{L^2([0,t]\times\Omega\times\s^1)}$.\\
\ \\
Step 3:\\
For test function $\phi(\vx,\vw)$ which is independent of time $t$,
in time interval $[t-\delta,t]$ the weak formulation in
(\ref{well-posedness temp 4}) can be simplified as
\begin{eqnarray}\label{test temp 6}
&&\l\int_{t-\delta}^t\iint_{\Omega\times\s^1}u_{\l}\phi
-\e\int_{t-\delta}^t\iint_{\Omega\times\s^1}(\vw\cdot\nx\phi)u_{\l}+\int_{t-\delta}^t\iint_{\Omega\times\s^1}(u_{\l}-\bar
u_{\l})\phi\\
&=&-\e\int_{t-\delta}^t\int_{\Gamma}u_{\l}\phi\ud{\gamma}
-\e^2\iint_{\Omega\times\s^1}u_{\l}(t)\phi+\e^2\iint_{\Omega\times\s^1}u_{\l}({t-\delta})\phi+\int_{t-\delta}^t\iint_{\Omega\times\s^1}f\phi.\no
\end{eqnarray}
Taking difference quotient as $\delta\rt0$, we know
\begin{eqnarray}
\frac{\e^2\displaystyle\iint_{\Omega\times\s^1}u_{\l}(t)\phi-\e^2\displaystyle\iint_{\Omega\times\s^1}u_{\l}({t-\delta})\phi}{\delta}\rt
\e^2\iint_{\Omega\times\s^1}\dt
u_{\l}(t)\phi.
\end{eqnarray}
Then (\ref{test temp 6}) can be simplified into
\begin{eqnarray}\label{test temp 7}
&&\e^2\iint_{\Omega\times\s^1}\dt u_{\l}(t)\phi\\
&=&-\l\iint_{\Omega\times\s^1}u_{\l}(t)\phi
+\e\iint_{\Omega\times\s^1}(\vw\cdot\nx\phi)u_{\l}(t)-\iint_{\Omega\times\s^1}(u_{\l}(t)-\bar
u_{\l}(t))\phi\no\\
&&-\e\int_{\Gamma}u_{\l}(t)\phi\ud{\gamma}+\iint_{\Omega\times\s^1}f(t)\phi.\no
\end{eqnarray}
For fixed $t$, taking $\phi=\Phi(\vx)$ which satisfies
\begin{eqnarray}
\left\{
\begin{array}{rcl}
\Delta \Phi&=&\dt\bar u_{\l}(t)\ \ \text{in}\ \
\Omega,\\\rule{0ex}{1.0em} \Phi&=&0\ \ \text{on}\ \ \p\Omega,
\end{array}
\right.
\end{eqnarray}
which further implies $\Phi=\dt\zeta$.
Then the left-hand side of (\ref{test temp 7}) is actually
\begin{eqnarray}
LHS&=&\e^2\iint_{\Omega\times\s^1}\Phi\dt u_{\l}(t)=\e^2\iint_{\Omega\times\s^1}\Phi\dt\bar u_{\l}\\
&=&\e^2\iint_{\Omega\times\s^1}\Phi\Delta\Phi=\e^2\iint_{\Omega\times\s^1}\abs{\nabla\Phi}^2\no\\
&=&\nm{\dt\nabla\zeta(t)}_{L^2(\Omega\times\s^1)}^2.\no
\end{eqnarray}
By a similar argument as in Step 2 and the Poincar\'e inequality, the right-hand side of
(\ref{test temp 7}) can be bounded as
\begin{eqnarray}
\\
RHS\ls \nm{\dt\nabla\zeta(t)}_{L^2(\Omega\times\s^1)}\bigg(\nm{u_{\l}(t)-\bar
u_{\l}(t)}_{L^2(\Omega\times\s^1)}+\l\nm{\bar
u_{\l}(t)}_{L^2(\Omega\times\s^1)}
+\nm{f(t)}_{L^2(\Omega\times\s^1)}\bigg).\no
\end{eqnarray}
Therefore, we have
\begin{eqnarray}
\nm{\dt\nabla\zeta(t)}_{L^2(\Omega\times\s^1)}\ls
\nm{u_{\l}(t)-\bar u_{\l}(t)}_{L^2(\Omega\times\s^1)}+\l\nm{\bar
u_{\l}(t)}_{L^2(\Omega\times\s^1)}
+\nm{f(t)}_{L^2(\Omega\times\s^1)}.
\end{eqnarray}
For all $t$, we can further integrate over $[0,t]$ to obtain
\begin{eqnarray}\label{wellposed temp 9}
&&\nm{\dt\nabla\zeta(t)}_{L^2([0,t]\times\Omega\times\s^1)}\\
&\ls& \nm{u_{\l}(t)-\bar
u_{\l}(t)}_{L^2([0,t]\times\Omega\times\s^1)}+\l\nm{\bar
u_{\l}(t)}_{L^2([0,t]\times\Omega\times\s^1)}
+\nm{f(t)}_{L^2([0,t]\times\Omega\times\s^1)}.\no
\end{eqnarray}
\ \\
Step 4:\\
Collecting terms in (\ref{wellposed temp 1}), (\ref{wellposed temp
2}), (\ref{wellposed temp 3}), (\ref{wellposed temp 4}),
(\ref{wellposed temp 5}), (\ref{wellposed temp 6}), (\ref{wellposed
temp 7}), (\ref{wellposed temp 8}), and (\ref{wellposed temp 9}), we
obtain
\begin{eqnarray}
&&\e\nm{\bar u_{\l}}_{L^2([0,t]\times\Omega\times\s^1)}\\
&\leq& C(\Omega)\bigg((1+\e+\l)\nm{u_{\l}-\bar
u_{\l}}_{L^2([0,t]\times\Omega\times\s^1)}+\e\nm{u_{\l}}_{L^2([0,t]\times\Gamma^+)}+\nm{f}_{L^2([0,t]\times\Omega\times\s^1)}+\e\tm{g}{[0,t]\times\Gamma^-}\bigg)\nonumber\\
&&+\e^2G(t)-\e^2G(0).\no
\end{eqnarray}
When $0\leq\l<1$ and $0<\e<1$, we get the desired uniform estimate
with respect to $\lambda$.
\end{proof}
\begin{theorem}\label{LT estimate}
Assume $\ue^{\l_0 t}f(t,\vx,\vw)\in
L^{2}([0,\infty)\times\Omega\times\s^1)$, $h(\vx,\vw)\in
L^{2}(\Omega\times\s^1)$ and $\ue^{\l_0 t}g(t,x_0,\vw)\in
L^{2}([0,\infty)\times\Gamma^-)$ for some $\l_0>0$. Then for the unsteady neutron
transport equation (\ref{neutron}), there exists $\l_0^{\ast}$ satisfying $0<\l_0^{\ast}\leq\l_0$ and a unique solution
$u(t,\vx,\vw)\in L^2([0,\infty)\times\Omega\times\s^1)$ satisfying
\begin{eqnarray}
&&\frac{1}{\e^{1/2}}\nm{\ue^{\l t}u}_{L^2([0,\infty)\times\Gamma^+)}+\nm{\ue^{\l t}u(t)}_{L^2(\Omega\times\s^1)}+\nm{\ue^{\l t}u}_{L^2([0,\infty)\times\Omega\times\s^1)}\\
&\leq& C(\Omega)\bigg( \frac{1}{\e^2}\nm{\ue^{\l
t}f}_{L^2([0,\infty)\times\Omega\times\s^1)}
+\nm{h}_{L^2(\Omega\times\s^1)}+\frac{1}{\e^{1/2}}\nm{\ue^{\l
t}g}_{L^2([0,\infty)\times\Gamma^-)}\bigg),\no
\end{eqnarray}
for any $0\leq\l\leq\l_0^{\ast}$. When $\l_0=0$, we have $\l_0^{\ast}=0$.
\end{theorem}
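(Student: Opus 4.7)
The plan is to obtain the solution as the $\mu\rt 0$ limit of the penalized solutions from Lemma \ref{well-posedness lemma 2}, after reducing to a weighted equation for $v:=\ue^{\l t}u$ and establishing a uniform (in the penalty $\mu$) weighted $L^2$ estimate that simultaneously controls the three left-hand-side quantities.

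First I perform the substitution $v=\ue^{\l t}u$, which turns (\ref{neutron}) into
\begin{eqnarray*}
\e^2\dt v+\e\vw\cdot\nx v+(1-\e^2\l)v-\bar v=\ue^{\l t}f,
\end{eqnarray*}
with initial datum $h$ and in-flow datum $\ue^{\l t}g$. Choosing $\l_0^\ast\leq\l_0$ small enough that $\e^2\l_0^\ast\leq\half$ ensures $1-\e^2\l\geq\half$ for every $\l\in[0,\l_0^\ast]$, so the transformed equation has the same structural features as the original one and the analogues of Lemma \ref{well-posedness lemma 2} and Lemma \ref{well-posedness lemma 4} hold for its penalized version with essentially identical proofs. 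For $L^2$ data one first approximates by $L^\infty$ data, invokes these lemmas, and passes to a limit at the end.

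Next I add a penalization $\mu v_\mu$ with $\mu>0$ and combine two energy estimates for the resulting equation. Testing against $v_\mu$ itself and using Green's identity (Lemma \ref{well-posedness lemma 3}) together with the orthogonality $\iint_{\Omega\times\s^1} v(v-\bar v)=\iint_{\Omega\times\s^1}(v-\bar v)^2$ yields the standard energy identity
\begin{eqnarray*}
\frac{\e^2}{2}\nm{v_\mu(t)}_{L^2(\Omega\times\s^1)}^2+\frac{\e}{2}\nm{v_\mu}_{L^2([0,t]\times\Gamma^+)}^2+\nm{v_\mu-\bar v_\mu}_{L^2([0,t]\times\Omega\times\s^1)}^2\leq\frac{\e^2}{2}\nm{h}_{L^2}^2+\frac{\e}{2}\nm{\ue^{\l t}g}_{L^2([0,t]\times\Gamma^-)}^2+\iint \ue^{\l t}f\,v_\mu,
\end{eqnarray*}
after dropping the nonnegative $\mu\nm{v_\mu}^2$ and $(1-\e^2\l)\nm{v_\mu}^2$ contributions on the left. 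The weighted $L^2$ version of Lemma \ref{well-posedness lemma 4} then converts the bound on $\nm{v_\mu-\bar v_\mu}$ just obtained into a bound on $\e\nm{\bar v_\mu}_{L^2}$. Splitting $\iint \ue^{\l t}f v_\mu=\iint \ue^{\l t}f(v_\mu-\bar v_\mu)+\iint \ue^{\l t}f\bar v_\mu$ and applying Young's inequality to each piece with weights tuned to produce the correct powers of $\e$ closes the estimate with exactly the coefficients $1/\e^2$ on $\ue^{\l t}f$, $1$ on $h$, and $1/\e^{1/2}$ on $\ue^{\l t}g$, uniformly in $\mu>0$ and in $t\in[0,\infty)$.

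Finally, weak compactness in $L^2([0,\infty)\times\Omega\times\s^1)$ extracts a limit $v$ as $\mu\rt0$; by linearity $v$ solves the transformed equation in the distributional sense and inherits the estimate, and undoing the substitution yields $u=\ue^{-\l t}v$. Uniqueness follows from applying the same energy estimate to the difference of two candidate solutions with zero data, which forces it to vanish. The case $\l_0=0$, hence $\l_0^\ast=0$, is the identical argument with no weight. The main technical obstacle is the precise $\e$-bookkeeping: the $\e$ multiplying the boundary trace in Lemma \ref{well-posedness lemma 4} has to align with the $\e$ appearing in the boundary term of the direct energy identity so that, after taking square roots, the $1/\e^{1/2}$ coefficient in front of the $\Gamma^+$-trace on the left is reproduced exactly, and the Young splittings on $\iint \ue^{\l t}f v_\mu$ must be tuned so that the $f$-contribution lands with coefficient $1/\e^2$ rather than a larger negative power of $\e$.
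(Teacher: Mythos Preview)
Your approach is essentially the same as the paper's: energy identity from testing the penalized equation against itself, combination with Lemma \ref{well-posedness lemma 4} to control $\bar v$, Cauchy/Young on the source term, and passage to the zero-penalty limit, with the exponential weight handled via the substitution $v=\ue^{\l t}u$ (the paper merely orders the steps differently, doing the unweighted estimate first and the weight afterward). One small correction: testing $(1-\e^2\l)v-\bar v$ against $v$ gives $\nm{v-\bar v}_{L^2}^2-\e^2\l\nm{v}_{L^2}^2$, not an additional nonnegative $(1-\e^2\l)\nm{v}_{L^2}^2$ term you can drop; the paper deals with this by moving $\e^2\l v$ to the source side and absorbing it for small $\l$, which is what your closure needs as well.
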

\begin{proof}
We divide the proof into several steps:\\
\ \\
Step 1: Weak formulation.\\
In the weak formulation (\ref{well-posedness temp 4}), we may take
the test function $\phi=u_{\l}$ to get the energy estimate
\begin{eqnarray}
&&\l\nm{u_{\l}}_{L^2([0,t]\times\Omega\times\s^1)}^2+\half\e\int_0^t\int_{\Gamma}\abs{u_{\l}}^2\ud{\gamma}\\
&&+\half\e^2\nm{u_{\l}(t)}_{L^2(\Omega\times\s^1)}^2-\half\e^2\nm{h}_{L^2(\Omega\times\s^1)}^2+\nm{u_{\l}-\bar
u_{\l}}_{L^2([0,t]\times\Omega\times\s^1)}^2\no\\
&=&\int_0^t\iint_{\Omega\times\s^1}fu_{\l}.\no
\end{eqnarray}
Hence, this naturally implies
\begin{eqnarray}\label{well-posedness temp 5}
&&\half\e\nm{u_{\l}}_{L^2([0,t]\times\Gamma^+)}^2+\half\e^2\nm{u_{\l}(t)}_{L^2(\Omega\times\s^1)}^2+\nm{u_{\l}-\bar
u_{\l}}_{L^2([0,t]\times\Omega\times\s^1)}^2\\
&\leq&\int_0^t\iint_{\Omega\times\s^1}fu_{\l}+\half\e^2\nm{h}_{L^2(\Omega\times\s^1)}^2+\half\e\nm{g}_{L^2([0,t]\times\Gamma^-)}^2.\no
\end{eqnarray}
On the other hand, we can square on both sides of
(\ref{well-posedness temp 3}) to obtain
\begin{eqnarray}\label{well-posedness temp 6}
&&\e^2\tm{\bar u_{\l}}{[0,t]\times\Omega\times\s^1}^2\\
&\leq& C(\Omega)\bigg( \tm{u_{\l}-\bar
u_{\l}}{[0,t]\times\Omega\times\s^1}^2+\tm{f}{[0,t]\times\Omega\times\s^1}^2+\e^2\tm{u_{\l}}{[0,t]\times\Gamma^{+}}^2+\e^2\tm{g}{[0,t]\times\Gamma^-}^2\no\\
&&+\e^4\tm{u_{\l}(t)}{\Omega\times\s^1}^2+\e^4\tm{h}{\Omega\times\s^1}^2\bigg).\nonumber
\end{eqnarray}
Multiplying a sufficiently small constant on both sides of
(\ref{well-posedness temp 6}) and adding it to (\ref{well-posedness
temp 5}) to absorb $\nm{u_{\l}}_{L^2(\Gamma^+)}^2$,
$\tm{u_{\l}(t)}{\Omega\times\s^1}^2$ and $\nm{u_{\l}-\bar
u_{\l}}_{L^2(\Omega\times\s^1)}^2$, we deduce
\begin{eqnarray}
&&\e\nm{u_{\l}}_{L^2([0,t]\times\Gamma^+)}^2+\e^2\nm{u_{\l}(t)}_{L^2(\Omega\times\s^1)}^2+\e^2\nm{\bar
u_{\l}}_{L^2([0,t]\times\Omega\times\s^1)}^2+\nm{u_{\l}-\bar
u_{\l}}_{L^2([0,t]\times\Omega\times\s^1)}^2\\&&\qquad\qquad\qquad\leq
C(\Omega)\bigg(\tm{f}{[0,t]\times\Omega\times\s^1}^2+
\int_0^t\iint_{\Omega\times\s^1}fu_{\l}+\e^2\nm{h}_{L^2(\Omega\times\s^1)}^2+\e\nm{g}_{L^2([0,t]\times\Gamma^-)}^2\bigg).\nonumber
\end{eqnarray}
Hence, we have
\begin{eqnarray}\label{well-posedness temp 7}
&&\e\nm{u_{\l}}_{L^2([0,t]\times\Gamma^+)}^2+\e^2\nm{u_{\l}(t)}_{L^2(\Omega\times\s^1)}^2+\e^2\nm{u_{\l}}_{L^2([0,t]\times\Omega\times\s^1)}^2\\
&\leq& C(\Omega)\bigg(\tm{f}{[0,t]\times\Omega\times\s^1}^2+
\int_0^t\iint_{\Omega\times\s^1}fu_{\l}+\e^2\nm{h}_{L^2(\Omega\times\s^1)}^2+\e\nm{g}_{L^2([0,t]\times\Gamma^-)}^2\bigg).\no
\end{eqnarray}
A simple application of Cauchy's inequality leads to
\begin{eqnarray}
\int_0^t\iint_{\Omega\times\s^1}fu_{\l}\leq\frac{1}{4C\e^2}\tm{f}{[0,t]\times\Omega\times\s^1}^2+C\e^2\tm{u_{\l}}{[0,t]\times\Omega\times\s^1}^2.
\end{eqnarray}
Taking $C$ sufficiently small, we can divide (\ref{well-posedness
temp 7}) by $\e^2$ to obtain
\begin{eqnarray}\label{well-posedness temp 21}
&&\frac{1}{\e}\nm{u_{\l}}_{L^2([0,t]\times\Gamma^+)}^2+\nm{u_{\l}(t)}_{L^2(\Omega\times\s^1)}^2+\nm{u_{\l}}_{L^2([0,t]\times\Omega\times\s^1)}^2\\
&\leq& C(\Omega)\bigg(
\frac{1}{\e^4}\nm{f}_{L^2([0,t]\times\Omega\times\s^1)}^2+\nm{h}_{L^2(\Omega\times\s^1)}^2+\frac{1}{\e}\nm{g}_{L^2([0,t]\times\Gamma^-)}^2\bigg).\no
\end{eqnarray}
\ \\
Step 2: Convergence.\\
Since above estimate does not depend on $\l$, it gives a uniform
estimate for the penalized neutron transport equation
(\ref{well-posedness penalty equation}). Thus, we can extract a
weakly convergent subsequence $u_{\l}\rt u$ as $\l\rt0$. The weak
lower semi-continuity of norms
$\nm{\cdot}_{L^2([0,t]\times\Omega\times\s^1)}$ and
$\nm{\cdot}_{L^2([0,t]\times\Gamma^+)}$ implies $u$ also satisfies
the estimate (\ref{well-posedness temp 21}). Hence, in the weak
formulation (\ref{well-posedness temp 4}), we can take $\l\rt0$ to
deduce that $u$ satisfies equation (\ref{neutron}). Also $u_{\l}-u$
satisfies the equation
\begin{eqnarray}
\\
\left\{
\begin{array}{rcl}
\e^2\dt(u_{\l}-u)+\e\vec w\cdot\nabla_x(u_{\l}-u)+(u_{\l}-u)-(\bar
u_{\l}-\bar u)&=&-\l u_{\l}\ \ \text{in}\ \
\Omega\label{remainder},\\\rule{0ex}{1.0em}
(u_{\l}-u)(0,\vx,\vw)&=&0\ \ \text{in}\ \ \Omega,\\\rule{0ex}{1.0em}
(u_{\l}-u)(\vec x_0,\vec w)&=&0\ \ \text{for}\ \ \vec
x_0\in\p\Omega\ \ \text{and}\ \vw\cdot\vec n<0.\no
\end{array}
\right.
\end{eqnarray}
By a similar argument as above, we can achieve
\begin{eqnarray}
\nm{u_{\l}-u}_{L^2([0,t]\times\Omega\times\s^1)}^2\leq
C(\Omega)\bigg(\frac{\l}{\e^4}\nm{u_{\l}}_{L^2([0,t]\times\Omega\times\s^1)}^2\bigg).
\end{eqnarray}
When $\l\rt0$, the right-hand side approaches zero, which implies
the convergence is actually in the strong sense. The uniqueness
easily follows from the energy estimates.\\
\ \\
Step 3: $L^2$ Decay.\\
Let $v=\ue^{\l t}u$. Then $v$ satisfies the equation
\begin{eqnarray}
\left\{
\begin{array}{rcl}
\e^2\dt v+\e\vw\cdot\nabla_xv+v-\bar v&=&f+\l\e^2v\ \ \text{in}\ \
\Omega,\\\rule{0ex}{1.0em} v(0,\vx,\vw)&=&h(\vx,\vw)\ \ \text{in}\ \
\Omega,\\\rule{0ex}{1.0em} v(\vx_0,\vw)&=&\ue^{\l t}g(t,\vx_0,\vw)\
\ \text{for}\ \ \vx_0\in\p\Omega\ \ \text{and}\ \vw\cdot\vec n<0.
\end{array}
\right.
\end{eqnarray}
Similar to the argument in Step 1, we can obtain
\begin{eqnarray}
&&\frac{1}{\e}\nm{v}_{L^2([0,t]\times\Gamma^+)}^2+\nm{v(t)}_{L^2(\Omega\times\s^1)}^2+\nm{v}_{L^2([0,t]\times\Omega\times\s^1)}^2\\
&\leq& C(\Omega)\bigg( \frac{1}{\e^4}\nm{\ue^{\l
t}f}_{L^2([0,t]\times\Omega\times\s^1)}^2+\frac{1}{\e^4}\nm{\l^2\e^4
v}_{L^2([0,t]\times\Omega\times\s^1)}^2
+\nm{h}_{L^2(\Omega\times\s^1)}^2+\frac{1}{\e}\nm{\ue^{\l
t}g}_{L^2([0,t]\times\Gamma^-)}^2\bigg).\no
\end{eqnarray}
Then when $\l$ is sufficiently small, we have
\begin{eqnarray}
&&\frac{1}{\e}\nm{v}_{L^2([0,t]\times\Gamma^+)}^2+\nm{v(t)}_{L^2(\Omega\times\s^1)}^2+\nm{v}_{L^2([0,t]\times\Omega\times\s^1)}^2\\
&\leq& C(\Omega)\bigg( \frac{1}{\e^4}\nm{\ue^{\l
t}f}_{L^2([0,t]\times\Omega\times\s^1)}^2
+\nm{h}_{L^2(\Omega\times\s^1)}^2+\frac{1}{\e}\nm{\ue^{\l
t}g}_{L^2([0,t]\times\Gamma^-)}^2\bigg),\no
\end{eqnarray}
which implies exponential decay of $u$.

\end{proof}

\subsection{$L^{\infty}$ Estimate}

\begin{theorem}\label{LI estimate}
Assume $\ue^{\l_0 t}f(t,\vx,\vw)\in
L^{\infty}([0,\infty)\times\Omega\times\s^1)$, $h(\vx,\vw)\in
L^{\infty}(\Omega\times\s^1)$ and $\ue^{\l_0 t}g(t,x_0,\vw)\in
L^{\infty}([0,\infty)\times\Gamma^-)$ for some $\l_0>0$. Then for the unsteady neutron
transport equation (\ref{neutron}), there exists $\l_0^{\ast}$ satisfying $0<\l_0^{\ast}\leq\l_0$ and a unique solution
$u(t,\vx,\vw)\in L^{\infty}([0,\infty)\times\Omega\times\s^1)$ satisfying
\begin{eqnarray}
&&\nm{\ue^{\l t}u}_{L^{\infty}([0,\infty)\times\Gamma^+)}+\nm{\ue^{\l t}u(t)}_{L^{\infty}(\Omega\times\s^1)}+\nm{\ue^{\l t}u}_{L^{\infty}([0,\infty)\times\Omega\times\s^1)}\\
&\leq& C(\Omega)\bigg( \frac{1}{\e^{4}}\nm{\ue^{\l
t}f}_{L^{2}([0,\infty)\times\Omega\times\s^1)}
+\frac{1}{\e^{2}}\nm{h}_{L^{2}(\Omega\times\s^1)}+\frac{1}{\e^{5/2}}\nm{\ue^{\l
t}g}_{L^{2}([0,\infty)\times\Gamma^-)}\no\\
&&+\nm{\ue^{\l
t}f}_{L^{\infty}([0,\infty)\times\Omega\times\s^1)}
+\nm{h}_{L^{\infty}(\Omega\times\s^1)}+\nm{\ue^{\l
t}g}_{L^{\infty}([0,\infty)\times\Gamma^-)}\bigg),\no
\end{eqnarray}
for any $0\leq\l\leq\l_0^{\ast}$. When $\l_0=0$, we have $\l_0^{\ast}=0$.
\end{theorem}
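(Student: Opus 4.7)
The plan is to bootstrap from the $L^2$ control provided by Theorem \ref{LT estimate} to an $L^\infty$ bound via the method of characteristics combined with a double Duhamel (Vidav--Guo type) iteration, handling the exponential decay by the standard substitution $v=\ue^{\l t}u$. As in Step 3 of the proof of Theorem \ref{LT estimate}, $v$ satisfies
\begin{eqnarray*}
\e^2\dt v+\e\vw\cdot\nx v+v-\bar v=\ue^{\l t}f+\l\e^2 v,
\end{eqnarray*}
with initial datum $h$ and boundary datum $\ue^{\l t}g$. For $\l_0^\ast$ chosen small enough that $\l\e^2\leq \tfrac{1}{2}$ uniformly in $\e\in(0,1)$, the term $\l\e^2 v$ will be absorbed into the left-hand side at the end.

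The first step is to write $v$ along the characteristics (\ref{character}), splitting into the two regimes $t\geq\e^2t_b$ and $t\leq\e^2 t_b$ as in (\ref{well-posedness temp 31}). The boundary, initial, and source contributions can be directly bounded in $L^\infty$ by $\im{\ue^{\l t}g}{[0,\infty)\times\Gamma^-}$, $\im{h}{\Omega\times\s^1}$, and $\im{\ue^{\l t}f}{[0,\infty)\times\Omega\times\s^1}$, respectively, thanks to the exponentially decaying kernel $\ue^{-(t_b-s)}$. The only genuine difficulty is the nonlocal contribution
\begin{eqnarray*}
\mathcal{J}(t,\vx,\vw)=\int_{0}^{\sigma}\bar v\bigl(t-\e^2(\sigma-s),\vx-\e(\sigma-s)\vw,\vw\bigr)\ue^{-(\sigma-s)}\ud{s},
\end{eqnarray*}
where $\sigma\in\{t_b,t/\e^2\}$.

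The key step is to substitute the same characteristic representation for $v$ into $\bar v=\frac{1}{2\pi}\int_{\s^1}v\,\ud{\vw}$ inside $\mathcal{J}$. This produces a double integral in which we first perform the velocity integration and then apply a change of variables from $(s,\vw)\in[0,\sigma]\times\s^1$ to the corresponding backward point in $\Omega$. In the two-dimensional geometry the Jacobian of this transformation has size $\e$ (one factor from $\ud{X}/\ud{s}=\e\vw$, times an angle factor), so the iterated integral is dominated by $\e^{-2}\tm{\bar v}{[0,t]\times\Omega\times\s^1}$ up to a harmless $L^\infty$ piece from the innermost boundary/initial/source terms. Combining this with Theorem \ref{LT estimate} applied to $v$ yields
\begin{eqnarray*}
\tm{\bar v}{[0,\infty)\times\Omega\times\s^1}\leq C(\Omega)\Bigl(\tfrac{1}{\e^{2}}\tnm{\ue^{\l t}f}+\tnm{h}+\tfrac{1}{\e^{1/2}}\tnm{\ue^{\l t}g}\Bigr),
\end{eqnarray*}
which, multiplied by the $\e^{-2}$ factor above, produces the claimed weights $\e^{-4}$, $\e^{-2}$, $\e^{-5/2}$. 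The $L^\infty$ bounds on $v(t)$ and on the outgoing trace follow by taking the supremum in $(t,\vx,\vw)$ in the characteristic formula; uniqueness was already established in Theorem \ref{LT estimate}.

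The main obstacle is the rigorous execution of the change of variables in the double Duhamel: the backward characteristic may either hit $\Gamma^-$ or terminate at the initial time, and uniformity near the grazing set $\Gamma^0$ has to be argued (e.g.\ by a small-measure cutoff on $|\vw\cdot\vec n|\ll 1$, estimated crudely by the $L^\infty$ bound on $v$ from Lemma \ref{well-posedness lemma 2}, combined with the iteration on the non-grazing part). A secondary technical point is the uniform control of the perturbation $\l\e^2 v$: after running the above estimate, one obtains a term $C(\Omega)\l\e^{-2}\lnm{v}$ on the right, which is absorbed by choosing $\l_0^\ast$ still smaller, depending only on $\Omega$. Taking $\l_0^\ast=0$ when $\l_0=0$ reduces to the plain $L^\infty$ estimate without decay.
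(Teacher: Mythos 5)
Your overall strategy matches the paper's: double Duhamel iteration along the characteristics, a change of variables whose degenerate Jacobian is handled by cutoffs, and an application of the $L^2$ bound from Theorem \ref{LT estimate} via H\"older; the weight $v=\ue^{\l t}u$ then gives the decay. But one step in your absorption argument for the perturbation $\l\e^2 v$ is wrong as stated. After the Duhamel bootstrap, the $\l\e^2 v$ source contributes only $\l\e^2\lnm{v}$ to the \emph{direct} $L^\infty$ pieces (analogous to (\ref{im temp 3}), (\ref{im temp 8})), and $\frac{C}{\e^2\delta}\tnm{v}$ to the \emph{iterated} piece ($I_{2,2}$). The first term is $\l\e^2\lnm{v}\leq\l\lnm{v}$ for $\e<1$, absorbable with $\l_0^*$ depending only on $\Omega$. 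For the second, you should \emph{not} try to dominate $\tnm{v}$ by $\lnm{v}$ (that is what produces your phantom term $C(\Omega)\l\e^{-2}\lnm{v}$, which cannot be absorbed uniformly in $\e$); instead, apply Step 3 of Theorem \ref{LT estimate} directly to $v$, which already absorbs the $\l\e^2 v$ perturbation for $\l$ small depending only on $\Omega$ and gives $\tnm{v}\leq C(\Omega)\bigl(\frac{1}{\e^2}\tnm{\ue^{\l t}f}+\tnm{h}+\frac{1}{\e^{1/2}}\tnm{\ue^{\l t}g}\bigr)$. Substituting this produces exactly the weights $\e^{-4},\e^{-2},\e^{-5/2}$ with $\l_0^*$ independent of $\e$.

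Two further imprecisions worth noting, though they do not change the conclusion. The Jacobian of the $(s^{\ast},\psi,r^{\ast})\rt(t',y_1,y_2)$ map in the double Duhamel is $\e^4(s_b-r^{\ast})\bigl(1-\vw\cdot\vw_t\bigr)$, i.e.\ $O(\e^4)$ (and degenerate), not ``size $\e$''; the $\e^{-2}$ in front of $\tnm{\bar v}$ is what comes out \emph{after} taking a square root in H\"older, together with the $\delta^{-1}$ from the cutoffs. Also, the paper's cutoffs are on $s_b-r^{\ast}\leq\delta$ and on $\vw\cdot\vw_t>1-\delta$ rather than on $|\vw\cdot\vec n|\ll 1$; your version needs care since $\vw\cdot\vec n$ is defined only on $\p\Omega$, whereas the degenerate set for the interior change of variables is where the two velocity directions nearly align.
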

\begin{proof}
We divide the proof into several steps to bootstrap an $L^2$ solution to an $L^{\infty}$ solution:\\
\ \\
Step 1: Double Duhamel iterations.\\
The characteristics of the equation (\ref{neutron}) is given by
(\ref{character}). Hence, we can rewrite the equation
(\ref{neutron}) along the characteristics as
\begin{eqnarray}
&&u(t,\vx,\vw)\\
&=&{\bf 1}_{\{t\geq \e^2t_b\}}\bigg( g(t-\e^2t_b,\vx-\e
t_b\vw,\vw)\ue^{-t_b}+\int_{0}^{t_b}(\bar u+f)(t-\e^2(t_b-s),\vx-\e(t_b-s)\vw,\vw)\ue^{-(t_b-s)}\ud{s}\bigg)\no\\
&&+{\bf 1}_{\{t\leq \e^2t_b\}}\bigg( h(\vx-(\e
t\vw)/\e^2,\vw)\ue^{-t/\e^2}+\int_{0}^{t/\e^2}(\bar
u+f)(\e^2s,\vx-\e(t/\e^2-s)\vw,\vw)\ue^{-(t/\e^2-s)}\ud{s}\bigg)\no,
\end{eqnarray}
where the backward exit time $t_b$ is defined as (\ref{exit time}).
For the convenience of analysis, we transform it into a simpler form
\begin{eqnarray}
&&u(t,\vx,\vw)\\
&=&{\bf 1}_{\{t\geq \e^2t_b\}}g(t-\e^2t_b,\vx-\e
t_b\vw,\vw)\ue^{-t_b}+{\bf 1}_{\{t\leq \e^2t_b\}} h(\vx-(\e t\vw)/\e^2,\vw)\ue^{-t/\e^2}\no\\
&&+\int_{t/\e^2-t_b\wedge(t/\e^2)}^{t/\e^2}f(\e^2s,\vx-\e(t/\e^2-s)\vw,\vw)\ue^{-(t/\e^2-s)}\ud{s}\no\\
&&+\frac{1}{2\pi}\int_{t/\e^2-t_b\wedge(t/\e^2)}^{t/\e^2}\bigg(\int_{-\pi}^{\pi}u(\e^2s,\vx-\e(t/\e^2-s)\vw,\vw_t)\ud{\vw_t}\bigg)\ue^{-(t/\e^2-s)}\ud{s}.\no
\end{eqnarray}
Here $a\wedge b$ denotes $\min\{a,b\}$.
Note we have replaced $\bar u$ by the integral of $u$ over the dummy
velocity variable $\vw_t$. For the last term in this formulation, we
apply the Duhamel's principle again to
$u(\e^2s,\vx-\e(t/\e^2-s)\vw,\vw_t)$ and obtain
\begin{eqnarray}\label{well-posedness temp 8}
&&u(t,\vx,\vw)\\
&=&{\bf 1}_{\{t\geq \e^2t_b\}}g(t-\e^2t_b,\vx-\e
t_b\vw,\vw)\ue^{-t_b}+{\bf 1}_{\{t\leq \e^2t_b\}} h(\vx-(\e t\vw)/\e^2,\vw)\ue^{-t/\e^2}\no\\
&&+\int_{t/\e^2-t_b\wedge(t/\e^2)}^{t/\e^2}f(\e^2s,\vx-\e(t/\e^2-s)\vw,\vw)\ue^{-(t/\e^2-s)}\ud{s}\no\\
&&+\frac{1}{2\pi}\int_{t/\e^2-t_b\wedge(t/\e^2)}^{t/\e^2}\bigg(\int_{-\pi}^{\pi}{\bf 1}_{\{s\geq s_b\}}g(\e^2(s-s_b),\vx-\e(t/\e^2-s)\vw,\vw_t)\ue^{-s_b}\ud{\vw_t}\bigg)\ue^{-(t/\e^2-s)}\ud{s}\no\\
&&+\frac{1}{2\pi}\int_{t/\e^2-t_b\wedge(t/\e^2)}^{t/\e^2}\bigg(\int_{-\pi}^{\pi}{\bf 1}_{\{s\geq s_b\}}h(\vx-\e(t/\e^2-s)\vw-\e s\vw_t,\vw_t)\ue^{-s}\ud{\vw_t}\bigg)\ue^{-(t/\e^2-s)}\ud{s}\no\\
&&+\frac{1}{2\pi}\int_{t/\e^2-t_b\wedge(t/\e^2)}^{t/\e^2}
\bigg(\int_{-\pi}^{\pi}\int_{s-s_b\wedge s}^{s}f(\e^2r,\vx-\e(t/\e^2-s)\vw-\e(s-r)\vw_t,\vw_t)\ue^{-(s-r)}\ud{r}\ud{\vw_t}\bigg)\ue^{-(t/\e^2-s)}\ud{s}\no\\
&&+\frac{1}{2\pi}\int_{t/\e^2-t_b\wedge(t/\e^2)}^{t/\e^2}\no\\
&&\bigg(\int_{-\pi}^{\pi}\int_{s-s_b\wedge
s}^{s}\bar u(\e^2r,\vx-\e(t/\e^2-s)\vw-\e(s-r)\vw_t)\ue^{-(s-r)}\ud{r}\ud{\vw_t}\bigg)\ue^{-(t/\e^2-s)}\ud{s}.\no
\end{eqnarray}
where we introduce another dummy velocity variable $\vw_s$ and
\begin{eqnarray}
s_b(\vx,\vw,s,\vw_t)=\inf\{r\geq0: (\vx-\e(t/\e^2-s)\vw-\e
r\vw_t,\vw_t)\in\Gamma^-\}.
\end{eqnarray}
\ \\
Step 2: Estimates of all but the last term in (\ref{well-posedness temp 8}).\\
We can directly estimate as follows:
\begin{eqnarray}\label{im temp 1}
\abs{{\bf 1}_{\{t\geq \e^2t_b\}}g(t-\e^2t_b,\vx-\e
t_b\vw,\vw)\ue^{-t_b}}&\leq&\im{g}{[0,t]\times\Gamma^-},
\end{eqnarray}
\begin{eqnarray}\label{im temp 2}
\abs{{\bf 1}_{\{t\leq \e^2t_b\}} h(\vx-(\e
t\vw)/\e^2,\vw)\ue^{-t/\e^2}} \leq \im{h}{\Omega\times\s^1},
\end{eqnarray}
\begin{eqnarray}\label{im temp 3}
\abs{\int_{t/\e^2-t_b\wedge(t/\e^2)}^{t/\e^2}f(\e^2s,\vx-\e(t/\e^2-s)\vw,\vw)\ue^{-(t/\e^2-s)}\ud{s}}\leq
\im{f}{[0,t]\times\Omega\times\s^1},
\end{eqnarray}
\begin{eqnarray}\label{im temp 4}
&&\frac{1}{2\pi}\int_{t/\e^2-t_b\wedge(t/\e^2)}^{t/\e^2}\bigg(\int_{-\pi}^{\pi}{\bf 1}_{\{s\geq s_b\}}g(\e^2(s-s_b),\vx-\e(t/\e^2-s)\vw,\vw_t)\ue^{-s_b}\ud{\vw_t}\bigg)\ue^{-(t/\e^2-s)}\ud{s}\\
&\leq& \im{g}{[0,t]\times\Gamma^-},\no
\end{eqnarray}
\begin{eqnarray}\label{im temp 7}
&&\abs{\frac{1}{2\pi}\int_{t/\e^2-t_b\wedge(t/\e^2)}^{t/\e^2}\bigg(\int_{-\pi}^{\pi}{\bf 1}_{\{s\geq s_b\}}h(\vx-\e(t/\e^2-s)\vw-\e s\vw_t,\vw_t)\ue^{-s}\ud{\vw_t}\bigg)\ue^{-(t/\e^2-s)}\ud{s}}\\
&\leq& \im{h}{\Omega\times\s^1},\nonumber
\end{eqnarray}
\begin{eqnarray}\label{im temp 8}
\\
&&\abs{\frac{1}{2\pi}\int_{t/\e^2-t_b\wedge(t/\e^2)}^{t/\e^2}
\bigg(\int_{-\pi}^{\pi}\int_{s-s_b\wedge s}^{s}f(\e^2r,\vx-\e(t/\e^2-s)\vw-\e(s-r)\vw_t,\vw_t)\ue^{-(s-r)}\ud{r}\ud{\vw_t}\bigg)\ue^{-(t/\e^2-s)}\ud{s}}\no\\
&\leq& \im{f}{[0,t]\times\Omega\times\s^1}\nonumber.
\end{eqnarray}
\ \\
Step 3: Estimates of the last term in (\ref{well-posedness temp 8}).\\
We can first transform the last term $I$ in (\ref{well-posedness temp 8}) into
\begin{eqnarray}\label{im temp 9}
\
\end{eqnarray}
\begin{eqnarray}
\abs{I}
&\leq&\frac{1}{2\pi}\int_{t/\e^2-t_b\wedge(t/\e^2)}^{t/\e^2}\bigg(\int_{-\pi}^{\pi}\int_{s-s_b\wedge
s}^{s}\abs{\bar u(\e^2r,\vx-\e(t/\e^2-s)\vw-\e(s-r)\vw_t)}\ue^{-(s-r)}\ud{r}\ud{\vw_t}\bigg)\ue^{-(t/\e^2-s)}\ud{s}\no\\
&\leq&\frac{1}{2\pi}\int_{0}^{t_b}\bigg(\int_{-\pi}^{\pi}\int_{0}^{s_b}\no\\
&&\abs{\bar u(\e^2(r^{\ast}+s^{\ast}+t/\e^2-t_b-s_b),\vx-\e(t_b-s^{\ast})\vw-\e(s_b-r^{\ast})\vw_t)}
\ue^{-(s_b-r^{\ast})}\ud{r^{\ast}}\ud{\vw_t}\bigg)\ue^{-(t_b-s^{\ast})}\ud{s^{\ast}}\no,
\end{eqnarray}
by substitution $s\rt s^{\ast}=(s-t/\e^2+t_b)$ and $r\rt r^{\ast}=(r-s+s_b)$.
Now we decompose the right-hand side in (\ref{im temp 9}) as
\begin{eqnarray}
\int_{0}^{t_b}\int_{\s^1}\int_0^{s_b}=\int_{0}^{t_b}\int_{\s^1}\int_{s_b-r^{\ast}\leq\delta}+
\int_{0}^{t_b}\int_{\s^1}\int_{s_b-r^{\ast}\geq\delta}=I_1+I_2,
\end{eqnarray}
for some $\delta>0$. We can estimate $I_1$ directly as
\begin{eqnarray}\label{im temp 5}
I_1
&\leq&\int_{0}^{t_b}\ue^{-(t_b-r^{\ast})}\bigg(\int_{\max\{0,s_b-\delta\}}^{s_b}
\im{u}{[0,t]\times\Omega\times\s^1}\ud{r^{\ast}}\bigg)\ud{s^{\ast}}\leq\delta\im{u}{[0,t]\times\Omega\times\s^1}.
\end{eqnarray}
Then we can bound $I_2$ as
\begin{eqnarray}
\\
I_2&\leq&C\int_{0}^{t_b}\int_{\s^1}\int_{0}^{\max\{0,s_b-\delta\}}\no\\
&&\abs{\bar u(\e^2(r^{\ast}+s^{\ast}+t/\e^2-t_b-s_b),\vx-\e(t_b-s^{\ast})\vw-\e(s_b-r^{\ast})\vw_t)}\ue^{-(s_b-r^{\ast})-(t_b-s^{\ast})}\ud{r^{\ast}}\ud{\vw_t}\ud{s^{\ast}}.\no
\end{eqnarray}
By the definition of $t_b$ and $s_b$, we always have $\e^2(r^{\ast}+s^{\ast}+t/\e^2-t_b-s_b)\in[0,t]$ and
$\vx-\e(t_b-s^{\ast})\vw-\e(s_b-r^{\ast})\vw_t\in\bar\Omega$.
Hence, we may interchange the order of integration and apply
H\"older's inequality to obtain
\begin{eqnarray}\label{well-posedness temp 22}
\
\end{eqnarray}
\begin{eqnarray}
I_2&\leq&C\int_{0}^{t_b}\int_{\s^1}\int_{0}^{\max\{0,s_b-\delta\}}{\bf{1}}_{[0,t]\times\Omega}
(\e^2(r^{\ast}+s^{\ast}+t/\e^2-t_b-s_b),\vx-\e(t_b-s^{\ast})\vw-\e(s_b-r^{\ast})\vw_t)\ue^{-(s_b-r^{\ast})-(t_b-s^{\ast})}\no\\
&&\abs{\bar u(\e^2(r^{\ast}+s^{\ast}+t/\e^2-t_b-s_b),\vx-\e(t_b-s^{\ast})\vw-\e(s_b-r^{\ast})\vw_t)}\ud{r^{\ast}}\ud{\vw_t}\ud{s^{\ast}}\nonumber\\
&\leq&C\bigg(\int_{0}^{t_b}\int_{\s^1}\int_{0}^{\max\{0,s_b-\delta\}}\ue^{-(s_b-r^{\ast})-(t_b-s^{\ast})}\ud{r^{\ast}}\ud{\vw_t}\ud{s^{\ast}}\bigg)^{1/2}\no\\
&&\times \bigg(\int_{0}^{t_b}\int_{\s^1}\int_{0}^{\max\{0,s_b-\delta\}}\no\\
&&{\bf{1}}_{[0,t]\times\Omega}
(\e^2(r^{\ast}+s^{\ast}+t/\e^2-t_b-s_b),\vx-\e(t_b-s^{\ast})\vw-\e(s_b-r^{\ast})\vw_t)\ue^{-(s_b-r^{\ast})-(t_b-s^{\ast})}
\no\\
&&\abs{\bar u^2(\e^2(r^{\ast}+s^{\ast}+t/\e^2-t_b-s_b),\vx-\e(t_b-s^{\ast})\vw-\e(s_b-r^{\ast})\vw_t)}\ud{r^{\ast}}\ud{\vw_t}\ud{s^{\ast}}\bigg)^{1/2}.\nonumber
\end{eqnarray}
Note $\vw_t\in\s^1$, which is essentially a one-dimensional
variable. Thus, we may write it in a new variable $\psi$ as
$\vw_t=(\cos\psi,\sin\psi)$. Then we define the change of variable
$[0,t/\e^2]\times[-\pi,\pi)\times\r\rt [0,t]\times\Omega: (s^{\ast},\psi,r^{\ast})\rt(t',y_1,y_2)=(\e^2(r^{\ast}+s^{\ast}+t/\e^2-t_b-s_b),\vx-\e(t_b-s^{\ast})\vw-\e(s_b-r^{\ast})\vw_t)$, i.e.
\begin{eqnarray}
\left\{
\begin{array}{rcl}
t'&=&\e^2(r^{\ast}+s^{\ast}+t/\e^2-t_b-s_b),\\
y_1&=&x_1-\e(t_b-s^{\ast})w_1-\e(s_b-r^{\ast})\cos\psi,\\
y_2&=&x_2-\e(t_b-s^{\ast})w_2-\e(s_b-r^{\ast})\sin\psi.
\end{array}
\right.
\end{eqnarray}
Therefore, for $s_b-r^{\ast}\geq\delta$, we can directly compute the
Jacobian
\begin{eqnarray}
\\
\abs{\frac{\p{(t',y_1,y_2)}}{\p{(s^{\ast},\psi,r^{\ast})}}}=\abs{\abs{\begin{array}{ccc}
\e^2&0&\e^2\\
\e w_1&\e(s_b-r^{\ast})\sin\psi&\e\cos\psi\\
\e w_2&-\e(s_b-r^{\ast})\cos\psi&\e\sin\psi
\end{array}}}=\e^4(s_b-r^{\ast})\bigg(1-(w_1\cos\psi+w_2\sin\psi)\bigg).\no
\end{eqnarray}
Thus, in order to guarantee the Jacobian is strictly positive, we may further decompose $I_2$ into $I_{2,1}+I_{2,2}$ where in $I_{2,1}$, we have $w_1\cos\psi+w_2\sin\psi>1-\delta$ and in $I_{2,2}$, we have $w_1\cos\psi+w_2\sin\psi\leq1-\delta$. Since $\vw=(w_1,w_2)\in\s^1$, based on trigonometric identity, we obtain
\begin{eqnarray}
\\
I_{2,1}&\leq&C\int_{0}^{t_b}\int_{\vw\cdot\vw_t>1-\delta}\int_{0}^{\max\{0,s_b-\delta\}}\no\\
&&\abs{\bar u(\e^2(r^{\ast}+s^{\ast}+t/\e^2-t_b-s_b),\vx-\e(t_b-s^{\ast})\vw-\e(s_b-r^{\ast})\vw_t)}\ue^{-(s_b-r^{\ast})-(t_b-s^{\ast})}\ud{r^{\ast}}\ud{\vw_t}\ud{s^{\ast}}\no\\
&\leq&\delta\im{u}{[0,t]\times\Omega\times\s^1}.\no
\end{eqnarray}
Hence, we may simplify (\ref{well-posedness temp 22}) as
\begin{eqnarray}
I_{2,2}&\leq&C\bigg(\int_{0}^{t}\int_{\s^1}\int_{\Omega}\frac{1}{\e^4\delta^2}\abs{\bar u^2(t',y)}\ud{\vec y}\ud{t'}\bigg)^{1/2}.
\end{eqnarray}
Then we may further utilize $L^2$
estimate of $u$ in Theorem \ref{LT estimate} to obtain
\begin{eqnarray}\label{im temp 6}
I_{2,2}&\leq&\frac{C}{\e^2\delta}\tm{u}{[0,t]\times\Omega\times\s^1}\\
&\leq&\frac{C(\Omega)}{\delta}\bigg(\frac{1}{\e^{4}}\tm{f}{[0,t]\times\Omega\times\s^1}
+\frac{1}{\e^{2}}\tm{h}{\Omega\times\s^1}+\frac{1}{\e^{5/2}}\tm{g}{[0,t]\times\Gamma^-}\bigg)\nonumber.
\end{eqnarray}
\ \\
Step 4: $L^{\infty}$ estimate.\\
In summary, collecting (\ref{im temp 1}), (\ref{im temp 2}),
(\ref{im temp 3}), (\ref{im temp 4}), (\ref{im temp 7}), (\ref{im
temp 8}), (\ref{im temp 5}) and (\ref{im temp 6}), for fixed
$0<\delta<1$, we have
\begin{eqnarray}
&&\abs{u(t,\vx,\vw)}\\
&\leq& \delta
\im{u}{[0,t]\times\Omega\times\s^1}+\frac{C(\Omega)}{\delta}\bigg(\frac{1}{\e^{4}}\tm{f}{[0,t]\times\Omega\times\s^1}
+\frac{1}{\e^{2}}\tm{h}{\Omega\times\s^1}+\frac{1}{\e^{5/2}}\tm{g}{[0,t]\times\Gamma^-}\bigg)\no\\
&&+\bigg(\im{f}{[0,t]\times\Omega\times\s^1}
+\im{h}{\Omega\times\s^1}+\im{g}{[0,t]\times\Gamma^-}\bigg).\no
\end{eqnarray}
Then we may take $0<\delta\leq1/2$ to obtain
\begin{eqnarray}
&&\abs{u(t,\vx,\vw)}\\
&\leq&
\half\im{u}{[0,t]\times\Omega\times\s^1}+C(\Omega)\bigg(\frac{1}{\e^{4}}\tm{f}{[0,t]\times\Omega\times\s^1}
+\frac{1}{\e^{2}}\tm{h}{\Omega\times\s^1}+\frac{1}{\e^{5/2}}\tm{g}{[0,t]\times\Gamma^-}\bigg)\no\\
&&+\bigg(\im{f}{[0,t]\times\Omega\times\s^1}
+\im{h}{\Omega\times\s^1}+\im{g}{[0,t]\times\Gamma^-}\bigg).\no
\end{eqnarray}
Taking supremum of $u$ over all $(t,\vx,\vw)$, we have
\begin{eqnarray}
&&\im{u}{[0,t]\times\Omega\times\s^1}\\
&\leq&
\half\im{u}{[0,t]\times\Omega\times\s^1}+C(\Omega)\bigg(\frac{1}{\e^{4}}\tm{f}{[0,t]\times\Omega\times\s^1}
+\frac{1}{\e^{2}}\tm{h}{\Omega\times\s^1}+\frac{1}{\e^{5/2}}\tm{g}{[0,t]\times\Gamma^-}\bigg)\no\\
&&+\bigg(\im{f}{[0,t]\times\Omega\times\s^1}
+\im{h}{\Omega\times\s^1}+\im{g}{[0,t]\times\Gamma^-}\bigg).\no
\end{eqnarray}
Finally, absorbing $\im{u}{[0,t]\times\Omega\times\s^1}$, we get
\begin{eqnarray}
\im{u}{[0,t]\times\Omega\times\s^1}&\leq&
C(\Omega)\bigg(\frac{1}{\e^{4}}\tm{f}{[0,t]\times\Omega\times\s^1}
+\frac{1}{\e^{2}}\tm{h}{\Omega\times\s^1}+\frac{1}{\e^{5/2}}\tm{g}{[0,t]\times\Gamma^-}\\
&&+\im{f}{[0,t]\times\Omega\times\s^1}
+\im{h}{\Omega\times\s^1}+\im{g}{[0,t]\times\Gamma^-}\bigg).\no
\end{eqnarray}
\ \\
Step 5: $L^{\infty}$ Decay.\\
Let $v=\ue^{\l t}u$. Then $v$ satisfies the equation
\begin{eqnarray}
\left\{
\begin{array}{rcl}
\e^2\dt v+\e\vw\cdot\nabla_xv+(1-\l\e^2)v-\bar v&=&f\ \ \text{in}\ \
\Omega,\\\rule{0ex}{1.0em} v(0,\vx,\vw)&=&h(\vx,\vw)\ \ \text{in}\ \
\Omega,\\\rule{0ex}{1.0em} v(\vx_0,\vw)&=&\ue^{\l t}g(t,\vx_0,\vw)\
\ \text{for}\ \ \vx_0\in\p\Omega\ \ \text{and}\ \vw\cdot\vec n<0.
\end{array}
\right.
\end{eqnarray}
By a similar argument as in Step 3 and Step 4, combined with the $L^2$ decay, we can
finally show the desired estimate.
\end{proof}

\subsection{Maximum Principle}

\begin{theorem}\label{maximum principle}
When $f=0$, the solution $u(t,\vx,\vw)$ to the unsteady neutron transport equation (\ref{neutron}) satisfies the maximum principle, i.e.
\begin{eqnarray}
\inf\{g(t,\vx_0,\vw), h(\vx,\vw)\}\leq u(t,\vx,\vw)\leq \sup\{g(t,\vx_0,\vw), h(\vx,\vw)\}.
\end{eqnarray}
\end{theorem}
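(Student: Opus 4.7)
The plan is to run an iterative Duhamel scheme that preserves the interval spanned by the data, exploiting the fact that with $f=0$ the collision operator annihilates constants ($\bar c = c$) so the equation is invariant under the shift $u \mapsto u-c$. First I would translate: pick any $c \in [\inf\{g,h\},\sup\{g,h\}]$ and set $v := u-c$. Then $v$ solves the same equation with data $h-c$ and $g-c$, and now $0 \in [\inf\{g-c,h-c\},\sup\{g-c,h-c\}]$. So it suffices to treat the case $m := \inf\{g,h\} \leq 0 \leq M := \sup\{g,h\}$.

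Next, I would set $u^0 \equiv 0$ and, in analogy with Lemma~\ref{well-posedness lemma 2} but taking $\l=0$, define $u^k$ as the solution (provided by Lemma~\ref{well-posedness lemma 1}) of
\begin{equation*}
\e^2 \dt u^k + \e\vw\cdot\nx u^k + u^k = \bar u^{k-1},\quad u^k(0,\vx,\vw)=h(\vx,\vw),\quad u^k|_{\Gamma^-}=g.
\end{equation*}
Along characteristics this expresses $u^k$ via $g$, $h$ and $\bar u^{k-1}$ with non-negative weights $\ue^{-t_b}$ (or $\ue^{-t/\e^2}$) on the data and $\ue^{-(t/\e^2-s)}\ud{s}$ on $\bar u^{k-1}$; the identity $\ue^{-\tau}+\int_0^\tau \ue^{-(\tau-s)}\ud{s} = 1$, with $\tau := \min\{t_b,t/\e^2\}$, makes this a convex combination. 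Since $m \leq 0 \leq M$ and $u^0 = 0 \in [m,M]$, induction yields $m \leq u^k \leq M$ for every $k$.

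The remaining task is convergence $u^k \to u$ in $L^\infty$. The increment $w^k := u^{k+1}-u^k$ solves the same equation with zero data and source $\bar w^{k-1}$, so the Duhamel formula gives
\begin{equation*}
\im{w^{k+1}}{[0,T]\times\Omega\times\s^1} \leq (1-\ue^{-\tau_*})\im{w^k}{[0,T]\times\Omega\times\s^1},
\end{equation*}
with $\tau_* \leq \max\{T/\e^2, 2/\e\} < \infty$ for fixed $\e>0$ and $T<\infty$, since $t_b \leq 2/\e$ along any backward characteristic in the unit disk $\Omega$. This contraction yields geometric convergence to some $u^\infty \in L^\infty$; passing to the limit in the mild formulation identifies $u^\infty$ as an $L^\infty$ solution of (\ref{neutron}) with $f=0$, and the uniqueness part of Theorem~\ref{LI estimate} forces $u^\infty = u$. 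Passing $m \leq u^k \leq M$ to the limit and undoing the initial translation concludes the proof.

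The main obstacle is this convergence step: unlike the penalized scheme of Lemma~\ref{well-posedness lemma 2}, the unpenalized iteration is not a uniform contraction, and the contraction factor depends on $\e$ and $T$ through the bound on the exit time. This is acceptable here because the theorem is a pointwise statement with no uniformity claim in $\e$, but one must localize on finite time windows $[0,T]$ and rely on the boundedness/convexity of $\Omega$ to ensure $t_b$ remains finite along every characteristic.
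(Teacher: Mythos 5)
Your proof is correct, and the core strategy — an iterative Duhamel scheme whose convex-combination structure preserves the data interval — is the same as the paper's, but your convergence step is cleaner and more elementary. The paper first reduces to showing that $g,h\leq 0$ forces $u\leq 0$, then establishes this for the \emph{penalized} scheme of Lemma~\ref{well-posedness lemma 2}, where the factor $1/(1+\l)$ gives a contraction precisely because $\l>0$, and finally passes $\l\to 0$ using the $L^2$ convergence $u_{\l}\to u$ from the proof of Theorem~\ref{LT estimate}. You dispense with the penalization entirely by noting that $t_b\leq 2/\e$ on the unit disk, so the unpenalized iteration already contracts with factor $1-\ue^{-2/\e}<1$ for fixed $\e$; this gives $L^{\infty}$ convergence of $u^k\to u$ in one shot and avoids the detour through the penalty parameter and the $L^2$ limit. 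Your translation by $c$ plays the same role as the paper's subtraction of $M$ (resp.\ $m$): in both proofs the point is to place the initial iterate $u^0\equiv 0$ inside the admissible interval so the induction can start. Two small corrections to your write-up: the relevant quantity is $\min\{t/\e^2,t_b\}$ rather than $\max$, and since $t_b\leq 2/\e$ uniformly in $(\vx,\vw)$ the contraction factor $1-\ue^{-2/\e}$ holds on all of $[0,\infty)\times\Omega\times\s^1$ — the localization to a finite window $[0,T]$ that you flag as the ``main obstacle'' in your closing paragraph is in fact unnecessary, so that obstacle dissolves.
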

\begin{proof}
We claim that it suffices to show $u(t,\vx,\vw)\leq 0$ whenever $g(t,\vx_0,\vw)\leq 0$ and $h(\vx,\vw)\leq0$. Suppose the claim is justified. Then define
\begin{eqnarray}
m&=&\inf\{g(t,\vx_0,\vw), h(\vx,\vw)\},\\
M&=&\sup\{g(t,\vx_0,\vw), h(\vx,\vw)\}.
\end{eqnarray}
We have $u_1=u-M$ satisfies the equation
\begin{eqnarray}
\left\{ \begin{array}{rcl} \e^2\dt u_1+\e\vec w\cdot\nabla_xu_1+u_1-\bar
u_1&=&0\ \ \text{in}\ \
[0,\infty)\times\Omega,\\\rule{0ex}{1.0em}
u_1(0,\vx,\vw)&=&h(\vx,\vw)-M\ \ \text{in}\ \ \Omega\\\rule{0ex}{1.0em}
u_1(t,\vec x_0,\vec w)&=&g(t,\vec x_0,\vec w)-M\ \ \text{for}\ \ \vw\cdot\vec n<0\ \
\text{and}\ \ \vx_0\in\p\Omega,
\end{array}
\right.
\end{eqnarray}
Hence, $h-M\leq 0$ and $g-M\leq0$ implies $u_1\leq 0$, which is actually $u\leq M$. Similarly, we have $u_2=m-u$ satisfies the equation
\begin{eqnarray}
\left\{ \begin{array}{rcl} \e^2\dt u_2+\e\vec w\cdot\nabla_xu_2+u_2-\bar
u_2&=&0\ \ \text{in}\ \
[0,\infty)\times\Omega,\\\rule{0ex}{1.0em}
u_2(0,\vx,\vw)&=&m-h(\vx,\vw)\ \ \text{in}\ \ \Omega\\\rule{0ex}{1.0em}
u_2(t,\vec x_0,\vec w)&=&m-g(t,\vec x_0,\vec w)\ \ \text{for}\ \ \vw\cdot\vec n<0\ \
\text{and}\ \ \vx_0\in\p\Omega,
\end{array}
\right.
\end{eqnarray}
Hence, $m-h\leq 0$ and $m-g\leq0$ implies $u_2\leq 0$, which is actually $u\geq m$. Therefore, the maximum principle is established.

We now prove the claim that if $g(t,\vx_0,\vw)\leq 0$ and $h(\vx,\vw)\leq0$, we have $u(t,\vx,\vw)\leq 0$. We first consider the penalized neutron transport equation
\begin{eqnarray}
\left\{ \begin{array}{rcl} \l u_{\l}+\e^2\dt
u_{\l}+\e\vw\cdot\nabla_xu_{\l}+u_{\l}-\bar u_{\l}&=& 0\
\ \text{in}\ \ [0,\infty)\times\Omega,\\\rule{0ex}{1.0em} u_{\l}(0,\vx,\vw)&=&h(\vx,\vw)\ \
\text{in}\ \ \Omega\\\rule{0ex}{1.0em} u_{\l}(t,\vx_0,\vec
w)&=&g(t,\vx_0,\vw)\ \ \text{for}\ \ \vx_0\in\p\Omega\ \ \text{and}\
\vw\cdot\vec n<0.
\end{array}
\right.
\end{eqnarray}
with $\l>0$. Based on Lemma \ref{well-posedness lemma 2}, there exists a solution $u_{\l}(t,\vx,\vw)\in
L^{\infty}([0,\infty)\times\Omega\times\s^1)$. We use the notation in the proof of Lemma \ref{well-posedness lemma 2}.
Define an approximating sequence $\{u_{\l}^k\}_{k=0}^{\infty}$,
where $u_{\l}^0=0$ and
\begin{eqnarray}
\left\{
\begin{array}{rcl}
\l u_{\l}^{k}+\e^2\dt
u_{\l}^k+\e\vw\cdot\nabla_xu_{\l}^k+u_{\l}^k-\bar
u_{\l}^{k-1}&=&0\ \ \text{in}\ \
[0,\infty)\times\Omega,\\\rule{0ex}{1.0em}
u_{\l}^k(0,\vx,\vw)&=&h(\vx,\vw)\ \ \text{in}\ \
\Omega\\\rule{0ex}{1.0em} u_{\l}^k(t,\vx_0,\vw)&=&g(t,\vx_0,\vw)\ \
\text{for}\ \ \vx_0\in\p\Omega\ \ \text{and}\ \vw\cdot\vec n<0.
\end{array}
\right.
\end{eqnarray}
By Lemma \ref{well-posedness lemma 1}, this sequence is well-defined
and $\im{u_{\l}^k}{[0,\infty)\times\Omega\times\s^1}<\infty$. Then we rewrite equation
(\ref{penalty temp 1}) along the characteristics as
\begin{eqnarray}
\
\end{eqnarray}
\begin{eqnarray}
&&u_{\l}^k(t,\vx,\vw)\no\\
&=&{\bf 1}_{\{t\geq \e^2t_b\}}\bigg( g(t-\e^2t_b,\vx-\e
t_b\vw,\vw)\ue^{-(1+\l)t_b}+\int_{0}^{t_b}\bar u_{\l}^{k-1}(t-\e^2(t_b-s),\vx-\e(t_b-s)\vw,\vw)\ue^{-(1+\l)(t_b-s)}\ud{s}\bigg)\no\\
&&+{\bf 1}_{\{t\leq \e^2t_b\}}\bigg( h(\vx-(\e
t\vw)/\e^2,\vw)\ue^{-(1+\l)t/\e^2}+\int_{0}^{t/\e^2}\bar
u_{\l}^{k-1}(\e^2s,\vx-\e(t/\e^2-s)\vw,\vw)\ue^{-(1+\l)(t/\e^2-s)}\ud{s}\bigg)\no,
\end{eqnarray}
where
\begin{equation}
t_b(\vx,\vw)=\inf\{s\geq0: (\vx-\e s\vw,\vw)\in\Gamma^-\}.
\end{equation}
Since $u_{\l}^k(t,\vx,\vw)\leq 0$ naturally implies $\bar u_{\l}^k(t,\vx)\leq 0$, we naturally have $u_{\l}^k(t,\vx,\vw)\leq 0$ when $g(t,\vx_0,\vw)\leq 0$ and $h(\vx,\vw)\leq0$. In the proof of Lemma \ref{well-posedness lemma 2}, we have shown $u_{\l}^k\rt u_{\l}$ in $L^{\infty}$ as $k\rt\infty$. Therefore, we have $u_{\l}(t,\vx,\vw)\leq 0$. Based on the proof of Lemma \ref{LT estimate}, we know $u_{\l}\rt u$ in $L^2$ as $\l\rt0$, where $u$ is the solution of the equation (\ref{neutron}). Then we naturally obtain $u\leq 0$. Also, this is the unique solution to the equation (\ref{neutron}). This justifies the claim and completes the proof.
\end{proof}
Theorem \ref{maximum principle} naturally leads to the $L^{\infty}$ estimate of the equation (\ref{neutron}).
\begin{corollary}\label{wellposedness estimate 2}
Assume $h(\vx,\vw)\in
L^{\infty}(\Omega\times\s^1)$ and $g(t,x_0,\vw)\in
L^{\infty}([0,\infty)\times\Gamma^-)$. Then for the unsteady neutron
transport equation (\ref{neutron}) with $f=0$, there exists a unique solution
$u(t,\vx,\vw)\in L^{\infty}([0,\infty)\times\Omega\times\s^1)$ satisfying
\begin{eqnarray}
&&\nm{u}_{L^{\infty}([0,\infty)\times\Gamma^+)}+\nm{u(t)}_{L^{\infty}(\Omega\times\s^1)}+\nm{u}_{L^{\infty}([0,\infty)\times\Omega\times\s^1)}
\leq\nm{h}_{L^{\infty}(\Omega\times\s^1)}+\nm{g}_{L^{\infty}([0,\infty)\times\Gamma^-)}.
\end{eqnarray}
\end{corollary}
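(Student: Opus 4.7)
The plan is to obtain this corollary essentially for free from Theorem \ref{maximum principle}, together with the existence and uniqueness machinery developed in the earlier subsections. Since no source term $f$ is present, the maximum principle already controls $u$ pointwise by the initial and boundary data, so no delicate $L^2$-to-$L^\infty$ bootstrap as in Theorem \ref{LI estimate} is required, and in particular no $\e$-dependent constants appear on the right-hand side.

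First I would invoke the existence of a unique $L^\infty$ solution exactly as in the proof of Theorem \ref{maximum principle}: the penalized solutions $u_\l$ from Lemma \ref{well-posedness lemma 2} form a family whose $\l$-uniform $L^2$ bound is provided by Theorem \ref{LT estimate}, hence they converge (strongly in $L^2$, as in Step 2 of the proof of Theorem \ref{LT estimate}) to a limit $u$ solving (\ref{neutron}) with $f=0$; uniqueness follows from the energy identity applied to the difference of two putative solutions. Then Theorem \ref{maximum principle}, with the choice $M=\sup\{g,h\}$ and $m=\inf\{g,h\}$, gives $m\leq u(t,\vx,\vw)\leq M$ pointwise on $[0,\infty)\times\Omega\times\s^1$, so
\[
\abs{u(t,\vx,\vw)}\leq\max\{\abs{M},\abs{m}\}\leq\nm{h}_{L^{\infty}(\Omega\times\s^1)}+\nm{g}_{L^{\infty}([0,\infty)\times\Gamma^-)},
\]
which, after taking the supremum on the left over the appropriate set, delivers both the $L^{\infty}([0,\infty)\times\Omega\times\s^1)$ and the $L^{\infty}(\Omega\times\s^1)$ parts of the claimed estimate.

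For the outflow trace $\nm{u}_{L^{\infty}([0,\infty)\times\Gamma^+)}$, I would use the characteristic representation analogous to (\ref{well-posedness temp 31}) (with $\l=0$ and the source $\bar u$ in place of $f$): since the weights $\ue^{-t_b}$ and $\int_0^{t_b}\ue^{-(t_b-s)}\ud{s}$ together with the corresponding initial-layer terms form a convex combination, and since $\nm{\bar u}_{L^{\infty}}\leq\nm{u}_{L^{\infty}}$ is already controlled by the previous step, the same bound passes directly to $(t,\vx_0,\vw)\in[0,\infty)\times\Gamma^+$. The main obstacle, if any, is purely technical: verifying that the $L^\infty$ limit solution produced by the penalization scheme attains its boundary and initial data in the pointwise sense required for the statement of Theorem \ref{maximum principle} to apply directly. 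This point, however, has already been handled in the construction used inside the proof of that theorem, so no new work is needed and the corollary follows.
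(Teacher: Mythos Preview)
Your proposal is correct and follows the same route as the paper: the corollary is stated immediately after Theorem \ref{maximum principle} with the remark that it ``naturally leads to the $L^{\infty}$ estimate,'' and no further proof is given. Your argument simply fills in the obvious details (bounding $\max\{|M|,|m|\}$ by $\nm{h}_{L^{\infty}}+\nm{g}_{L^{\infty}}$ and handling the outflow trace), which the paper leaves implicit.
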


\subsection{Well-posedness of Transport Equation}

Combining the results in Theorem \ref{LI estimate} and Corollary \ref{wellposedness estimate 2}, we can show an improved $L^{\infty}$ estimate of the equation (\ref{neutron}).
\begin{theorem}\label{improved LI estimate}
Assume $f(t,\vx,\vw)\in
L^{\infty}([0,\infty)\times\Omega\times\s^1)$, $h(\vx,\vw)\in
L^{\infty}(\Omega\times\s^1)$ and $g(t,x_0,\vw)\in
L^{\infty}([0,\infty)\times\Gamma^-)$. Then for the unsteady neutron
transport equation (\ref{neutron}), there exists a unique solution
$u(t,\vx,\vw)\in L^{\infty}([0,\infty)\times\Omega\times\s^1)$ satisfying
\begin{eqnarray}
&&\nm{u}_{L^{\infty}([0,\infty)\times\Gamma^+)}+\nm{u(t)}_{L^{\infty}(\Omega\times\s^1)}+\nm{u}_{L^{\infty}([0,\infty)\times\Omega\times\s^1)}\\
&\leq& C(\Omega)\bigg(\frac{1}{\e^{4}}\nm{f}_{L^{2}([0,\infty)\times\Omega\times\s^1)}+\nm{f}_{L^{\infty}([0,\infty)\times\Omega\times\s^1)}\bigg)
+\nm{h}_{L^{\infty}(\Omega\times\s^1)}+\nm{g}_{L^{\infty}([0,\infty)\times\Gamma^-)}.\no
\end{eqnarray}
\end{theorem}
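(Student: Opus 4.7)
The plan is to exploit linearity and split the solution into two pieces, each controlled by one of the two earlier results. Specifically, write $u = u_1 + u_2$, where $u_1$ solves the homogeneous problem
\begin{eqnarray*}
\e^2\dt u_1+\e\vw\cdot\nx u_1+u_1-\bar u_1 &=& 0 \ \ \text{in}\ \ [0,\infty)\times\Omega,\\
u_1(0,\vx,\vw) &=& h(\vx,\vw),\\
u_1(t,\vx_0,\vw) &=& g(t,\vx_0,\vw) \ \ \text{on}\ \ \Gamma^-,
\end{eqnarray*}
and $u_2$ solves the inhomogeneous problem with trivial initial and boundary data
\begin{eqnarray*}
\e^2\dt u_2+\e\vw\cdot\nx u_2+u_2-\bar u_2 &=& f(t,\vx,\vw) \ \ \text{in}\ \ [0,\infty)\times\Omega,\\
u_2(0,\vx,\vw) &=& 0,\\
u_2(t,\vx_0,\vw) &=& 0 \ \ \text{on}\ \ \Gamma^-.
\end{eqnarray*}
Both subproblems satisfy the compatibility condition trivially, so by Theorem~\ref{LI estimate} each admits a unique $L^\infty$ solution, and then $u = u_1 + u_2$ is the unique $L^\infty$ solution to the original equation (\ref{neutron}) by linearity and uniqueness.

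For the first piece, since the source vanishes, Corollary~\ref{wellposedness estimate 2} (the maximum principle) applies directly, giving
\[
\nm{u_1}_{L^{\infty}([0,\infty)\times\Gamma^+)}+\nm{u_1(t)}_{L^{\infty}(\Omega\times\s^1)}+\nm{u_1}_{L^{\infty}([0,\infty)\times\Omega\times\s^1)}
\leq \nm{h}_{L^{\infty}(\Omega\times\s^1)}+\nm{g}_{L^{\infty}([0,\infty)\times\Gamma^-)},
\]
with no $\e$ losses. For the second piece, we apply Theorem~\ref{LI estimate} with $h=0$, $g=0$, and $\l_0=0$ (so $\l_0^{\ast}=0$), which gives
\[
\nm{u_2}_{L^{\infty}([0,\infty)\times\Gamma^+)}+\nm{u_2(t)}_{L^{\infty}(\Omega\times\s^1)}+\nm{u_2}_{L^{\infty}([0,\infty)\times\Omega\times\s^1)}
\leq C(\Omega)\bigg(\frac{1}{\e^{4}}\nm{f}_{L^{2}([0,\infty)\times\Omega\times\s^1)}+\nm{f}_{L^{\infty}([0,\infty)\times\Omega\times\s^1)}\bigg).
\]
Adding the two estimates via the triangle inequality yields the stated bound.

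There is no real obstacle here; the only thing to verify is that the decomposition is legitimate, i.e.\ that $u_1$ and $u_2$ each exist in $L^\infty$ and sum to the unique solution $u$ from Theorem~\ref{LI estimate}. Existence and uniqueness of $u_1, u_2$ come directly from Theorem~\ref{LI estimate}; the sum $u_1+u_2$ solves (\ref{neutron}) with data $(h,g,f)$ and lies in $L^\infty$, hence must coincide with $u$ by the uniqueness part of Theorem~\ref{LI estimate}. The point of the argument is conceptual rather than technical: the wasteful $\e^{-2}\nm{h}_{L^2}$ and $\e^{-5/2}\nm{g}_{L^2}$ contributions appearing in Theorem~\ref{LI estimate} are artifacts of the double-Duhamel bootstrap and can be bypassed for the data-driven component using the maximum principle, while the source-driven component genuinely requires the $L^2$-to-$L^\infty$ bootstrap, producing the $\e^{-4}\nm{f}_{L^2}$ term that cannot be improved by this method.
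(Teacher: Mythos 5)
Your proposal is correct and follows the paper's own proof essentially verbatim: split $u=u_1+u_2$ by linearity, bound the homogeneous/data piece $u_1$ via the maximum-principle corollary (Corollary~\ref{wellposedness estimate 2}), bound the source piece $u_2$ via Theorem~\ref{LI estimate} with zero data, and add. No discrepancies.
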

\begin{proof}
Since the equation (\ref{neutron}) is a linear equation, then we can utilize the superposition property, i.e. we can separate the solution $u=u_1+u_2$ where
$u_1$ satisfies the equation
\begin{eqnarray}\label{improved temp 1}
\left\{ \begin{array}{rcl} \e^2\dt u_1+\e\vec w\cdot\nabla_xu_1+u_1-\bar
u_1&=&0\ \ \text{in}\ \
[0,\infty)\times\Omega,\\\rule{0ex}{1.0em}
u_1(0,\vx,\vw)&=&h(\vx,\vw)\ \ \text{in}\ \ \Omega\\\rule{0ex}{1.0em}
u_1(t,\vec x_0,\vec w)&=&g(t,\vec x_0,\vec w)\ \ \text{for}\ \ \vw\cdot\vec n<0\ \
\text{and}\ \ \vx_0\in\p\Omega,
\end{array}
\right.
\end{eqnarray}
and $u_2$ satisfies the equation
\begin{eqnarray}\label{improved temp 2}
\left\{ \begin{array}{rcl} \e^2\dt u_2+\e\vec w\cdot\nabla_xu_2+u_2-\bar
u_2&=&f(t,\vx,\vw)\ \ \text{in}\ \
[0,\infty)\times\Omega,\\\rule{0ex}{1.0em}
u_2(0,\vx,\vw)&=&0\ \ \text{in}\ \ \Omega\\\rule{0ex}{1.0em}
u_2(t,\vec x_0,\vec w)&=&0\ \ \text{for}\ \ \vw\cdot\vec n<0\ \
\text{and}\ \ \vx_0\in\p\Omega,
\end{array}
\right.
\end{eqnarray}
Note that the data in (\ref{improved temp 1}) and (\ref{improved temp 2}) satisfy the compatibility condition (\ref{compatibility condition}). Therefore, we can apply the previous results in this section. Corollary \ref{wellposedness estimate 2} yields
\begin{eqnarray}\label{improved temp 3}
&&\nm{u_1}_{L^{\infty}([0,\infty)\times\Gamma^+)}+\nm{u_1(t)}_{L^{\infty}(\Omega\times\s^1)}+\nm{u_1}_{L^{\infty}([0,\infty)\times\Omega\times\s^1)}
\leq\nm{h}_{L^{\infty}(\Omega\times\s^1)}+\nm{g}_{L^{\infty}([0,\infty)\times\Gamma^-)}.
\end{eqnarray}
Also, Theorem \ref{LI estimate} leads to
\begin{eqnarray}\label{improved temp 4}
&&\nm{u_2}_{L^{\infty}([0,\infty)\times\Gamma^+)}+\nm{u_2(t)}_{L^{\infty}(\Omega\times\s^1)}+\nm{u_2}_{L^{\infty}([0,\infty)\times\Omega\times\s^1)}\\
&\leq& C(\Omega)\bigg( \frac{1}{\e^{4}}\nm{f}_{L^{2}([0,\infty)\times\Omega\times\s^1)}+\nm{f}_{L^{\infty}([0,\infty)\times\Omega\times\s^1)}
\bigg).\no
\end{eqnarray}
Combining (\ref{improved temp 3}) and (\ref{improved temp 4}), we have the desired result.
\end{proof}
Finally, we can apply Theorem \ref{improved LI estimate} to the equation (\ref{transport}) and obtain Theorem \ref{main 1}.
\begin{theorem}\label{well-posedness 2}
Assume $g(t,x_0,\vw)\in L^{\infty}([0,\infty)\times\Gamma^-)$ and
$h(\vx,\vw)\in L^{\infty}(\Omega\times\s^1)$. Then for the unsteady
neutron transport equation (\ref{transport}), there exists a unique
solution $u^{\e}(t,\vx,\vw)\in
L^{\infty}([0,\infty)\times\Omega\times\s^1)$ satisfying
\begin{eqnarray}
\im{u^{\e}}{[0,\infty)\times\Omega\times\s^1}\leq
\nm{h}_{L^{\infty}(\Omega\times\s^1)}+\nm{g}_{L^{\infty}([0,\infty)\times\Gamma^-)}.
\end{eqnarray}
\end{theorem}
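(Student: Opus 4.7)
The plan is to observe that Theorem \ref{well-posedness 2} is nothing more than the specialization of the general theory developed in this section to the homogeneous case. Indeed, the transport equation (\ref{transport}) is exactly equation (\ref{neutron}) with source term $f\equiv 0$, and the stated initial/boundary data $h\in L^{\infty}(\Omega\times\s^1)$, $g\in L^{\infty}([0,\infty)\times\Gamma^-)$ satisfy the compatibility condition (\ref{compatibility condition}) required for invoking the preceding results.

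With $f=0$, I would simply quote Corollary \ref{wellposedness estimate 2}, which was derived from the maximum principle in Theorem \ref{maximum principle}. That corollary already delivers existence, uniqueness, and the clean bound
\begin{equation*}
\nm{u}_{L^{\infty}([0,\infty)\times\Gamma^+)}+\nm{u(t)}_{L^{\infty}(\Omega\times\s^1)}+\nm{u}_{L^{\infty}([0,\infty)\times\Omega\times\s^1)}\leq \nm{h}_{L^{\infty}(\Omega\times\s^1)}+\nm{g}_{L^{\infty}([0,\infty)\times\Gamma^-)},
\end{equation*}
with no multiplicative constant, which is a fortiori the inequality claimed in the theorem. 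Alternatively, one could invoke the more general Theorem \ref{improved LI estimate} and drop every $f$-term on the right-hand side. Either route is immediate.

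Since this is essentially a repackaging step, there is no real obstacle; the only thing to verify is that the hypotheses of Corollary \ref{wellposedness estimate 2} are met, namely $f=0$ and the compatibility condition (\ref{compatibility condition}), both of which are built into the statement of (\ref{transport}). Thus the proof should be a one-paragraph reduction to Corollary \ref{wellposedness estimate 2}, and this also completes the proof of Theorem \ref{main 1} announced in the introduction (with $C(\Omega)=1$, in fact).
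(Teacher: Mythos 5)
Your proposal is correct and matches the paper's approach: the paper obtains this result by specializing Theorem \ref{improved LI estimate} (equivalently Corollary \ref{wellposedness estimate 2}) to the case $f\equiv 0$, since (\ref{transport}) is precisely (\ref{neutron}) with vanishing source. Your observation that $C(\Omega)=1$ here is also accurate, since in both Corollary \ref{wellposedness estimate 2} and Theorem \ref{improved LI estimate} the constant multiplies only the $f$-dependent terms.
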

%
%

\section{Asymptotic Analysis}

In this section, we construct the asymptotic expansion of the equation (\ref{transport}).

\subsection{Discussion of Compatibility Condition}

The initial and boundary data satisfy the
compatibility condition
\begin{eqnarray}
h(\vx_0,\vw)=g(0,\vx_0,\vw)\ \ \text{for}\ \ \vw\cdot\vec n<0.
\end{eqnarray}
Then in the half-space $\vw\cdot\vec n<0$ at $(0,\vx_0,\vw)$, the equation
\begin{eqnarray}
\e^2\dt u^{\e}+\e \vw\cdot\nabla_x u^{\e}+u^{\e}-\bar
u^{\e}&=&0,
\end{eqnarray}
is valid, which implies
\begin{eqnarray}\label{comp 1}
\e^2\dt g(0,\vx_0,\vw)+\e \vw\cdot\nabla_xh(\vx_0,\vw)+h(\vx_0,\vw)-\bar
h(\vx_0)&=&0.
\end{eqnarray}
In order to show the diffusive limit, the condition (\ref{comp 1}) holds for arbitrary $\e$. Since $g$ and $h$ are all independent of $\e$, we must have for $\vw\cdot\vec n<0$,
\begin{eqnarray}
\dt g(0,\vx_0,\vw)&=&0,\\
\vw\cdot\nabla_xh(\vx_0,\vw)&=&0,\\
h(\vx_0,\vw)-\bar
h(\vx_0)&=&0.\label{comp 2}
\end{eqnarray}
The relation (\ref{comp 2}) implies the improved compatibility condition
\begin{eqnarray}\label{improved compatibility condition}
h(\vx_0,\vw)=g(0,\vx_0,\vw)=C_0\ \ \text{for}\ \ \vw\cdot\vec n<0,
\end{eqnarray}
for some constant $C_0$. This fact is of great importance in the following analysis.

\subsection{Interior Expansion}

We define the interior expansion as follows:
\begin{eqnarray}\label{interior expansion}
\u(t,\vx,\vw)\sim\sum_{k=0}^{\infty}\e^k\u_k(t,\vx,\vw),
\end{eqnarray}
where $\u_k$ can be defined by comparing the order of $\e$ via
plugging (\ref{interior expansion}) into the equation
(\ref{transport}). Thus, we have
\begin{eqnarray}
\u_0-\bu_0&=&0,\label{expansion temp 1}\\
\u_1-\bu_1&=&-\vw\cdot\nx\u_0,\label{expansion temp 2}\\
\u_2-\bu_2&=&-\vw\cdot\nx\u_1-\dt\u_0,\label{expansion temp 3}\\
\ldots\nonumber\\
\u_k-\bu_k&=&-\vw\cdot\nx\u_{k-1}-\dt\u_{k-2}.
\end{eqnarray}
\ \\
The following analysis reveals the equation satisfied by
$\u_k$:\\
Plugging (\ref{expansion temp 1}) into (\ref{expansion temp 2}), we
obtain
\begin{eqnarray}
\u_1=\bu_1-\vw\cdot\nx\bu_0.\label{expansion temp 4}
\end{eqnarray}
Plugging (\ref{expansion temp 4}) into (\ref{expansion temp 3}), we
get
\begin{eqnarray}\label{expansion temp 13}
\u_2-\bu_2+\dt\u_0=-\vw\cdot\nx(\bu_1-\vw\cdot\nx\bu_0)=-\vw\cdot\nx\bu_1+\abs{\vw}^2\Delta_x\bu_0+2w_1w_2\p_{x_1x_2}\bu_0.
\end{eqnarray}
Integrating (\ref{expansion temp 13}) over $\vw\in\s^1$, we achieve
the final form
\begin{eqnarray}
\dt\bu_0-\Delta_x\bu_0=0,
\end{eqnarray}
which further implies $\u_0(t,\vx,\vw)$ satisfies the equation
\begin{eqnarray}\label{interior 1}
\left\{
\begin{array}{rcl}
\u_0&=&\bu_0,\\
\dt\bu_0-\Delta_x\bu_0&=&0.
\end{array}
\right.
\end{eqnarray}
Similarly, we can derive $\u_1(t,\vx,\vw)$ satisfies
\begin{eqnarray}\label{interior 2}
\left\{
\begin{array}{rcl}
\u_1&=&\bu_1-\vw\cdot\nx\u_0,\\
\dt\bu_1-\Delta_x\bu_1&=&0,
\end{array}
\right.
\end{eqnarray}
and $\u_k(t,\vx,\vw)$ for $k\geq2$ satisfies
\begin{eqnarray}\label{interior 3}
\left\{
\begin{array}{rcl}
\u_k&=&\bu_k-\vw\cdot\nx\u_{k-1}-\dt\u_{k-2},\\
\dt\bu_k-\Delta_x\bu_k&=&0.
\end{array}
\right.
\end{eqnarray}
Note that in order to determine $\u_k$, we need to define the initial data and boundary data.

\subsection{Initial Layer Expansion}

In order to determine the initial condition for $\u_k$, we need to define the initial layer expansion. Hence, we need a substitution:\\
\ \\
Temporal Substitution:\\
We define the stretched variable $\tau$ by making the
scaling transform for $u^{\e}(t)\rt u^{\e}(\tau)$
with $\tau\in [0,\infty)$ as
\begin{eqnarray}\label{substitution 0}
\tau&=&\frac{t}{\e^2},
\end{eqnarray}
which implies
\begin{eqnarray}
\frac{\p u^{\e}}{\p t}=\frac{1}{\e^2}\frac{\p u^{\e}}{\p\tau}.
\end{eqnarray}
In this new variable,
equation (\ref{transport}) can be rewritten as
\begin{eqnarray}\label{initial temp}
\left\{ \begin{array}{l}\displaystyle \p_{\tau}u^{\e}+\e\vw\cdot\nabla_xu^{\e}+u^{\e}-\bar u^{\e}=0,\\\rule{0ex}{1.0em}
u^{\e}(0,\vx,\vw)=h(\vx,\vw),\\\rule{0ex}{1.0em}
u^{\e}(\tau,\vx_0,\vw)=g(\tau,\vx_0,\vw)\ \ \text{for}\ \
\vw\cdot\vec n<0.
\end{array}
\right.
\end{eqnarray}
We define the initial layer expansion as follows:
\begin{eqnarray}\label{initial layer expansion}
\ub_I(\tau,\vx,\vw)\sim\sum_{k=0}^{\infty}\e^k\ub_{I,k}(\tau,\vx,\vw),
\end{eqnarray}
where $\ub_{I,k}$ can be determined by comparing the order of $\e$ via
plugging (\ref{initial layer expansion}) into the equation
(\ref{initial temp}). Thus, we
have
\begin{eqnarray}
\p_{\tau}\ub_{I,0}+\ub_{I,0}-\bub_{I,0}&=&0,\label{initial expansion 1}\\
\p_{\tau}\ub_{I,1}+\ub_{I,1}-\bub_{I,1}&=&-\vw\cdot\nabla_x\ub_{I,0},\label{initial expansion 2}\\
\p_{\tau}\ub_{I,2}+\ub_{I,2}-\bub_{I,2}&=&-\vw\cdot\nabla_x\ub_{I,1},\label{initial expansion 3}\\
\ldots\no\\
\p_{\tau}\ub_{I,k}+\ub_{I,k}-\bub_{I,k}&=&-\vw\cdot\nabla_x\ub_{I,k-1}.\label{initial expansion 4}
\end{eqnarray}
\ \\
The following analysis reveals the equation satisfied by
$\ub_{I,k}$:\\
Integrate (\ref{initial expansion 1}) over $\vw\in\s^1$, we have
\begin{eqnarray}
\p_{\tau}\bub_{I,0}=0.
\end{eqnarray}
which further implies
\begin{eqnarray}
\bub_{I,0}(\tau,\vx)=\bub_{I,0}(0,\vx).
\end{eqnarray}
Therefore, from (\ref{initial expansion 1}), we can deduce
\begin{eqnarray}
\ub_{I,0}(\tau,\vx,\vw)&=&\ue^{-\tau}\ub_{I,0}(0,\vx,\vw)+\int_0^{\tau}\bub_{I,0}(s,\vx)\ue^{s-\tau}\ud{s}\\
&=&\ue^{-\tau}\ub_{I,0}(0,\vx,\vw)+(1-\ue^{-\tau})\bub_{I,0}(0,\vx).\no
\end{eqnarray}
This means we have
\begin{eqnarray}
\left\{
\begin{array}{rcl}
\p_{\tau}\bub_{I,0}&=&0,\\\rule{0ex}{1.0em}
\ub_{I,0}(\tau,\vx,\vw)&=&\ue^{-\tau}\ub_{I,0}(0,\vx,\vw)+(1-\ue^{-\tau})\bub_{I,0}(0,\vx).
\end{array}
\right.
\end{eqnarray}
Similarly, we can derive $\ub_{I,k}(\tau,\vx,\vw)$ for $k\geq1$ satisfies
\begin{eqnarray}
\left\{
\begin{array}{rcl}
\p_{\tau}\bub_{I,k}&=&-\displaystyle\int_{\s^1}\bigg(\vw\cdot\nabla_x\ub_{I,k-1}\bigg)\ud{\vw},\\\rule{0ex}{1.5em}
\ub_{I,k}(\tau,\vx,\vw)&=&\ue^{-\tau}\ub_{I,k}(0,\vx,\vw)+\displaystyle\int_0^{\tau}\bigg(\bub_{I,k}-\vw\cdot\nabla_x\ub_{I,k-1}\bigg)(s,\vx,\vw)\ue^{s-\tau}\ud{s}.
\end{array}
\right.
\end{eqnarray}

\subsection{Boundary Layer Expansion with Geometric Correction}

In order to determine the boundary condition for $\u_k$, we need to define the boundary layer expansion. Hence, we need several substitutions:\\
\ \\
Spacial Substitution 1:\\
We consider the substitution into quasi-polar coordinates
$u^{\e}(x_1,x_2)\rt u^{\e}(\mu,\theta)$ with $(\mu,\theta)\in
[0,1)\times[-\pi,\pi)$ defined as
\begin{eqnarray}\label{substitution 1}
\left\{
\begin{array}{rcl}
x_1&=&(1-\mu)\cos\theta,\\
x_2&=&(1-\mu)\sin\theta.
\end{array}
\right.
\end{eqnarray}
Here $\mu$ denotes the distance to the boundary $\p\Omega$ and
$\theta$ is the space angular variable. In these new variables,
equation (\ref{transport}) can be rewritten as
\begin{eqnarray}
\\
\left\{ \begin{array}{l}\displaystyle \e^2\frac{\p u^{\e}}{\p
t}-\e\bigg(w_1\cos\theta+w_2\sin\theta\bigg)\frac{\p
u^{\e}}{\p\mu}-\frac{\e}{1-\mu}\bigg(w_1\sin\theta-w_2\cos\theta\bigg)\frac{\p
u^{\e}}{\p\theta}+u^{\e}-\frac{1}{2\pi}\int_{\s^1}u^{\e}\ud{\vw}=0,\\\rule{0ex}{1.0em}
u^{\e}(0,\mu,\theta,w_1,w_2)=h(\mu,\theta,w_1,w_2),\\\rule{0ex}{1.0em}
u^{\e}(t,0,\theta,w_1,w_2)=g(t,\theta,w_1,w_2)\ \ \text{for}\ \
w_1\cos\theta+w_2\sin\theta<0.\no
\end{array}
\right.
\end{eqnarray}
\ \\
Spacial Substitution 2:\\
We further define the stretched variable $\eta$ by making the
scaling transform for $u^{\e}(\mu,\theta)\rt u^{\e}(\eta,\theta)$
with $(\eta,\theta)\in [0,1/\e)\times[-\pi,\pi)$ as
\begin{eqnarray}\label{substitution 2}
\left\{
\begin{array}{rcl}
\eta&=&\dfrac{\mu}{\e},\\\rule{0ex}{1.5em}
\theta&=&\theta,
\end{array}
\right.
\end{eqnarray}
which implies
\begin{eqnarray}
\frac{\p u^{\e}}{\p\mu}=\frac{1}{\e}\frac{\p u^{\e}}{\p\eta}.
\end{eqnarray}
Then equation (\ref{transport}) is transformed into
\begin{eqnarray}
\\
\left\{ \begin{array}{l}\displaystyle \e^2\frac{\p u^{\e}}{\p
t}-\bigg(w_1\cos\theta+w_2\sin\theta\bigg)\frac{\p
u^{\e}}{\p\eta}-\frac{\e}{1-\e\eta}\bigg(w_1\sin\theta-w_2\cos\theta\bigg)\frac{\p
u^{\e}}{\p\theta}+u^{\e}-\frac{1}{2\pi}\int_{\s^1}u^{\e}\ud{\vw}=0,\\\rule{0ex}{1.0em}
u^{\e}(0,\eta,\theta,\vw)=h(\eta,\theta,w_1,w_2),\\\rule{0ex}{1.0em}
u^{\e}(t,0,\theta,w_1,w_2)=g(t,\theta,w_1,w_2)\ \ \text{for}\ \
w_1\cos\theta+w_2\sin\theta<0.\no
\end{array}
\right.
\end{eqnarray}
\ \\
Spacial Substitution 3:\\
Define the velocity substitution for $u^{\e}(w_1,w_2)\rt
u^{\e}(\xi)$ with $\xi\in [-\pi,\pi)$ as
\begin{eqnarray}\label{substitution 3}
\left\{
\begin{array}{rcl}
w_1&=&-\sin\xi,\\
w_2&=&-\cos\xi.
\end{array}
\right.
\end{eqnarray}
Here $\xi$ denotes the velocity angular variable. We have the
succinct form for (\ref{transport}) as
\begin{eqnarray}\label{classical temp}
\left\{ \begin{array}{l}\displaystyle \e^2\frac{\p u^{\e}}{\p
t}+\sin(\theta+\xi)\frac{\p
u^{\e}}{\p\eta}-\frac{\e}{1-\e\eta}\cos(\theta+\xi)\frac{\p
u^{\e}}{\p\theta}+u^{\e}-\frac{1}{2\pi}\int_{-\pi}^{\pi}u^{\e}\ud{\xi}=0,\\\rule{0ex}{1.0em}
u^{\e}(0,\eta,\theta,\xi)=h(\eta,\theta,\xi),\\\rule{0ex}{1.0em}
u^{\e}(t,0,\theta,\xi)=g(t,\theta,\xi)\ \ \text{for}\ \
\sin(\theta+\xi)>0.
\end{array}
\right.
\end{eqnarray}
\ \\
Spacial Substitution 4:\\
We make the rotation substitution for $u^{\e}(\xi)\rt u^{\e}(\phi)$
with $\phi\in [-\pi,\pi)$ as
\begin{eqnarray}\label{substitution 4}
\begin{array}{rcl}
\phi&=&\theta+\xi,
\end{array}
\end{eqnarray}
and transform the equation (\ref{transport}) into
\begin{eqnarray}\label{transport temp}
\left\{ \begin{array}{l}\displaystyle \e^2\dfrac{\p u^{\e}}{\p
t}+\sin\phi\frac{\p
u^{\e}}{\p\eta}-\frac{\e}{1-\e\eta}\cos\phi\bigg(\frac{\p
u^{\e}}{\p\phi}+\frac{\p
u^{\e}}{\p\theta}\bigg)+u^{\e}-\frac{1}{2\pi}\int_{-\pi}^{\pi}u^{\e}\ud{\phi}=0,\\\rule{0ex}{1.0em}
u^{\e}(0,\eta,\theta,\phi)=h(\eta,\theta,\phi),\\\rule{0ex}{1.0em}
u^{\e}(t,0,\theta,\phi)=g(t,\theta,\phi)\ \ \text{for}\ \
\sin\phi>0.
\end{array}
\right.
\end{eqnarray}
We define the boundary layer expansion with geometric correction as follows:
\begin{eqnarray}\label{boundary layer expansion}
\ub_B(t,\eta,\theta,\phi)\sim\sum_{k=0}^{\infty}\e^k\ub_{B,k}(t,\eta,\theta,\phi),
\end{eqnarray}
where $\ub_{B,k}$ can be determined by comparing the order of $\e$ via
plugging (\ref{boundary layer expansion}) into the equation
(\ref{transport temp}). Following the idea in \cite{AA003}, in a neighborhood of the boundary, we
require
\begin{eqnarray}
\sin\phi\frac{\p \ub_{B,0}}{\p\eta}-\frac{\e}{1-\e\eta}\cos\phi\frac{\p
\ub_{B,0}}{\p\phi}+\ub_{B,0}-\bub_{B,0}&=&0,\label{expansion temp 5}\\
\sin\phi\frac{\p \ub_{B,1}}{\p\eta}-\frac{\e}{1-\e\eta}\cos\phi\frac{\p
\ub_{B,1}}{\p\phi}+\ub_{B,1}-\bub_{B,1}&=&\frac{1}{1-\e\eta}\cos\phi\frac{\p
\ub_{B,0}}{\p\theta},\label{expansion temp 6}\\
\sin\phi\frac{\p \ub_{B,2}}{\p\eta}-\frac{\e}{1-\e\eta}\cos\phi\frac{\p
\ub_{B,2}}{\p\phi}+\ub_{B,2}-\bub_{B,2}&=&\frac{1}{1-\e\eta}\cos\phi\frac{\p
\ub_{B,1}}{\p\theta}-\frac{\p\ub_{B,0}}{\p t},\label{expansion temp 7}\\
\ldots\nonumber\\
\sin\phi\frac{\p \ub_{B,k}}{\p\eta}-\frac{\e}{1-\e\eta}\cos\phi\frac{\p
\ub_{B,k}}{\p\phi}+\ub_{B,k}-\bub_{B,k}&=&\frac{1}{1-\e\eta}\cos\phi\frac{\p
\ub_{B,k-1}}{\p\theta}-\frac{\p\ub_{B,k-2}}{\p t}.
\end{eqnarray}
where
\begin{eqnarray}
\bub_{B,k}(t,\eta,\theta)=\frac{1}{2\pi}\int_{-\pi}^{\pi}\ub_{B,k}(t,\eta,\theta,\phi)\ud{\phi}.
\end{eqnarray}
It is important to note the solution $\ub_{B,k}$ depends on $\e$ and this is the reason why we add the superscript $\e$ to $\u_k$, $\ub_{I,k}$ and $\ub_{B,k}$.

\subsection{Initial-Boundary Layer Expansion}

Above construction of initial layer and boundary layer yields an interesting fact that at the corner point $(t,\vx)=(0,\vx_0)$ for $\vx_0\in\p\Omega$, the initial layer starting from this point has a contribution on the boundary data, and the boundary layer starting from this point has a contribution on the initial data. Therefore, we have to find some additional functions to compensate these noises. The classical theory of asymptotic analysis requires the so-called initial-boundary layer, where both the temporal scaling and spacial scaling should be used simultaneously. Fortunately, based on our analysis, the improved compatibility condition (\ref{improved compatibility condition}) implies the value at this corner point is a constant for $\vw\cdot\vec n<0$. Then
These contribution must be zero at the zeroth order, i.e.
\begin{eqnarray}
\ub_{I,0}(\tau,\vx_0,\vw)&=&0,\\
\ub_{B,0}(0,\eta,\theta,\phi)&=&0,
\end{eqnarray}
Therefore, the zeroth order initial-boundary layer is absent.

\subsection{Construction of Asymptotic Expansion}

The bridge between the interior solution, the initial layer, and the boundary layer
is the initial and boundary condition of (\ref{transport}). To avoid the introduction of higher order initial-boundary layer, we only require the zeroth order expansion of initial and boundary data be satisfied, i.e. we have
\begin{eqnarray}
\u_0(0,\vx,\vw)+\ub_{I,0}(0,\vx,\vw)+\ub_{B,0}(0,\vx,\vw)&=&h(\vx,\vw),\\
\u_0(t,\vx_0,\vw)+\ub_{I,0}(t,\vx_0,\vw)+\ub_{B,0}(t,\vx_0,\vw)&=&g(t,\vx_0,\vw).
\end{eqnarray}
The construction of $\u_k$. $\ub_{I,k}$ and $\ub_{B,k}$ are as follows:\\
\ \\
Assume the cut-off function $\psi$ and $\psi_0$ are defined as
\begin{eqnarray}\label{cut-off 1}
\psi(\mu)=\left\{
\begin{array}{ll}
1&0\leq\mu\leq1/2,\\
0&3/4\leq\mu\leq\infty.
\end{array}
\right.
\end{eqnarray}
\begin{eqnarray}\label{cut-off 2}
\psi_0(\mu)=\left\{
\begin{array}{ll}
1&0\leq\mu\leq1/4,\\
0&3/8\leq\mu\leq\infty.
\end{array}
\right.
\end{eqnarray}
and define the force as
\begin{eqnarray}\label{force}
F(\e;\eta)=-\frac{\e\psi(\e\eta)}{1-\e\eta},
\end{eqnarray}
\ \\
Step 1: Construction of zeroth order terms.\\
The zeroth order boundary layer solution is defined as
\begin{eqnarray}\label{expansion temp 9}
\left\{
\begin{array}{rcl}
\ub_{B,0}(t,\eta,\theta,\phi)&=&\psi_0(\e\eta)\bigg(\f_0^{\e}(t,\eta,\theta,\phi)-f_0^{\e}(t,\infty,\theta)\bigg),\\
\sin\phi\dfrac{\p \f_0^{\e}}{\p\eta}-F(\e;\eta)\cos\phi\dfrac{\p
\f_0^{\e}}{\p\phi}+\f_0^{\e}-\bar \f_0^{\e}&=&0,\\
\f_0^{\e}(t,0,\theta,\phi)&=&g(t,\theta,\phi)\ \ \text{for}\ \
\sin\phi>0,\\\rule{0ex}{1em}
\lim_{\eta\rt\infty}\f_0^{\e}(t,\eta,\theta,\phi)&=&f_0^{\e}(t,\infty,\theta).
\end{array}
\right.
\end{eqnarray}
Assuming
$g\in L^{\infty}$, by Theorem \ref{Milne theorem 1}, we can show
there exists a unique solution $\f_0^{\e}(t,\eta,\theta,\phi)\in
L^{\infty}$. Hence, $\ub_{B,0}$ is well-defined.\\
The zeroth order initial layer is defined as
\begin{eqnarray}\label{expansion temp 21}
\left\{
\begin{array}{rcl}
\ub_{I,0}(\tau,\vx,\vw)&=&\ff_0^{\e}(\tau,\vx,\vw)-\ff_0^{\e}(\infty,\vx)\\
\p_{\tau}\bar\ff_0^{\e}&=&0,\\\rule{0ex}{1.0em}
\ff_0^{\e}(\tau,\vx,\vw)&=&\ue^{-\tau}\ff_0^{\e}(0,\vx,\vw)+(1-\ue^{-\tau})\bar\ff_0^{\e}(0,\vx),\\
\ff_0^{\e}(0,\vx,\vw)&=&h(\vx,\vw),\\
\lim_{\tau\rt\infty}\ff_0^{\e}(\tau,\vx,\vw)&=&\ff_0^{\e}(\infty,\vx).
\end{array}
\right.
\end{eqnarray}
Assuming
$h\in L^{\infty}$. Then we can show
there exists a unique solution $\ff_0^{\e}(\tau,\vx,\vw)\in
L^{\infty}$. Hence, $\ub_{I,0}$ is well-defined.\\
Then we can define the zeroth order interior solution as
\begin{eqnarray}\label{expansion temp 8}
\left\{
\begin{array}{rcl}
\u_0&=&\bu_0,\\\rule{0ex}{1em}
\dt\bu_0-\Delta_x\bu_0&=&0,\\\rule{0ex}{1em}\bu_0(0,\vx)&=&\ff_0^{\e}(\infty,\vx)\
\ \text{in}\ \ \Omega,\\\rule{0ex}{1em}
\bu_0(t,\vx_0)&=&f_0^{\e}(t,\infty,\theta)\ \ \text{on}\ \ \p\Omega,
\end{array}
\right.
\end{eqnarray}
where $(t,\vx,\vw)$ is the same point as $(\tau,\eta,\theta,\phi)$. Note that due to the improved compatibility condition (\ref{improved compatibility condition}), we have
$\ub_{B,0}(0,\eta,\theta,\phi)=\ub_{I,0}(\tau,\vx_0,\vw)=0$.\\
\ \\
Step 2: Construction of first order terms. \\
Define the first order boundary layer solution as
\begin{eqnarray}\label{expansion temp 11}
\left\{
\begin{array}{rcl}
\ub_{B,1}(t,\eta,\theta,\phi)&=&\psi_0(\e\eta)\bigg(\f_1^{\e}(t,\eta,\theta,\phi)-f_1^{\e}(t,\infty,\theta)\bigg),\\
\sin\phi\dfrac{\p \f_1^{\e}}{\p\eta}-F(\e;\eta)\cos\phi\dfrac{\p
\f_1^{\e}}{\p\phi}+\f_1^{\e}-\bar
\f_1^{\e}&=&\cos\phi\dfrac{\psi(\e\eta)}{1-\e\eta}\dfrac{\p
\ub_{B,0}}{\p\theta},\\\rule{0ex}{1em}
\f_1^{\e}(t,0,\theta,\phi)&=&\vw\cdot\nx\u_0(t,\vx_0,\vw)\ \ \text{for}\
\ \sin\phi>0,\\\rule{0ex}{1em}
\lim_{\eta\rt\infty}\f_1^{\e}(t,\eta,\theta,\phi)&=&f_1^{\e}(t,\infty,\theta).
\end{array}
\right.
\end{eqnarray}
Define the first order initial layer as
\begin{eqnarray}\label{expansion temp 22}
\left\{
\begin{array}{rcl}
\ub_{I,1}(\tau,\vx,\vw)&=&\ff_1^{\e}(\tau,\vx,\vw)-\ff_1^{\e}(\infty,\vx)\\
\p_{\tau}\bar\ff_1^{\e}&=&-\displaystyle\int_{\s^1}\bigg(\vw\cdot\nabla_x\ub_{I,0}\bigg)\ud{\vw},\\\rule{0ex}{1.5em}
\ff_1^{\e}(\tau,\vx,\vw)&=&\ue^{-\tau}\ff_1^{\e}(0,\vx,\vw)+\displaystyle\int_0^{\tau}\bigg(\bar\ff_1^{\e}-\vw\cdot\nabla_x\ub_{I,0}\bigg)(s,\vx,\vw)\ue^{s-\tau}\ud{s},\\
\ff_1^{\e}(0,\vx,\vw)&=&\vw\cdot\nx\u_0(0,\vx,\vw),\\
\lim_{\tau\rt\infty}\ff_1^{\e}(\tau,\vx,\vw)&=&\ff_1^{\e}(\infty,\vx).
\end{array}
\right.
\end{eqnarray}
Define the first order interior solution as
\begin{eqnarray}\label{expansion temp 10}
\left\{
\begin{array}{rcl}
\u_1&=&\bu_1-\vw\cdot\nx\u_0,\\
\dt\bu_1-\Delta_x\bu_1&=&0,\\\rule{0ex}{1em}\bu_1(0,\vx)&=&\ff_1^{\e}(\infty,\vx)\ \
\text{in}\ \ \Omega,\\\rule{0ex}{1em}
\bu_1(t,\vx)&=&f_1^{\e}(t,\infty,\theta)\ \ \text{on}\ \ \p\Omega.
\end{array}
\right.
\end{eqnarray}
\ \\
Step 3: Construction of $\ub_2$ and $\u_2$. \\
Define the second order boundary layer solution as
\begin{eqnarray}
\
\end{eqnarray}
\begin{eqnarray}
\left\{
\begin{array}{rcl}
\ub_{B,2}(t,\eta,\theta,\phi)&=&\psi_0(\e\eta)\bigg(\f_2^{\e}(t,\eta,\theta,\phi)-f_2^{\e}(t,\infty,\theta)\bigg),\\
\sin\phi\dfrac{\p \f_2^{\e}}{\p\eta}-F(\e;\eta)\cos\phi\dfrac{\p
\f_2^{\e}}{\p\phi}+\f_2^{\e}-\bar
\f_2^{\e}&=&\cos\phi\dfrac{\psi(\e\eta)}{1-\e\eta}\dfrac{\p
\ub_{B,1}}{\p\theta}-\dfrac{\p\ub_{B,0}}{\p t},\\\rule{0ex}{1em}
\f_2^{\e}(t,0,\theta,\phi)&=&\vw\cdot\nx\u_1(t,\vx_0,\vw)+\dt\u_0(t,\vx_0,\vw)\
\ \text{for}\ \ \sin\phi>0,\\\rule{0ex}{1em}
\lim_{\eta\rt\infty}\f_2^{\e}(t,\eta,\theta,\phi)&=&f_2^{\e}(t,\infty,\theta).\no
\end{array}
\right.
\end{eqnarray}
Define the second order initial layer as
\begin{eqnarray}\label{expansion temp 23}
\left\{
\begin{array}{rcl}
\ub_{I,2}(\tau,\vx,\vw)&=&\ff_2^{\e}(\tau,\vx,\vw)-\ff_2^{\e}(\infty,\vx)\\
\p_{\tau}\bar\ff_2^{\e}&=&-\displaystyle\int_{\s^1}\bigg(\vw\cdot\nabla_x\ub_{I,1}\bigg)\ud{\vw},\\\rule{0ex}{1.5em}
\ff_2^{\e}(\tau,\vx,\vw)&=&\ue^{-\tau}\ff_2^{\e}(0,\vx,\vw)+\displaystyle\int_0^{\tau}\bigg(\bar\ff_2^{\e}-\vw\cdot\nabla_x\ub_{I,1}\bigg)(s,\vx,\vw)\ue^{s-\tau}\ud{s},\\
\ff_2^{\e}(0,\vx,\vw)&=&\vw\cdot\nx\u_1(0,\vx,\vw)+\dt\u_0(0,\vx,\vw),\\
\lim_{\tau\rt\infty}\ff_2^{\e}(\tau,\vx,\vw)&=&\ff_2^{\e}(\infty,\vx).
\end{array}
\right.
\end{eqnarray}
Define the first order interior solution as
\begin{eqnarray}
\left\{
\begin{array}{rcl}
\u_2&=&\bu_2-\vw\cdot\nx\u_1-\dt\u_0,\\
\dt\bu_2-\Delta_x\bu_2&=&0,\\\rule{0ex}{1em}\bu_2(0,\vx)&=&\ff_2^{\e}(\infty,\vx)\ \
\text{in}\ \ \Omega,\\\rule{0ex}{1em}
\bu_2(t,\vx)&=&f_2^{\e}(t,\infty,\theta)\ \ \text{on}\ \ \p\Omega.
\end{array}
\right.
\end{eqnarray}
\ \\
Step 4: Generalization to arbitrary $k$.\\
Similar to above procedure, we can define the $k^{th}$ order
boundary layer solution as
\begin{eqnarray}
\
\end{eqnarray}
\begin{eqnarray}
\left\{
\begin{array}{rcl}
\ub_{B,k}(t,\eta,\theta,\phi)&=&\psi_0(\e\eta)\bigg(\f_k^{\e}(t,\eta,\theta,\phi)-f_k^{\e}(t,\infty,\theta)\bigg),\\
\sin\phi\dfrac{\p \f_k^{\e}}{\p\eta}-F(\e;\eta)\cos\phi\dfrac{\p
\f_k^{\e}}{\p\phi}+\f_k^{\e}-\bar
\f_k^{\e}&=&\cos\phi\dfrac{\psi(\e\eta)}{1-\e\eta}\dfrac{\p
\ub_{B,k-1}}{\p\theta}-\dfrac{\p\ub_{B,k-2}}{\p t},\\\rule{0ex}{1em}
\f_k^{\e}(t,0,\theta,\phi)&=&\vw\cdot\nx\u_{k-1}(t,\vx_0,\vw)+\dt\u_{k-2}(t,\vx_0,\vw)\
\ \text{for}\ \ \sin\phi>0,\\\rule{0ex}{1em}
\lim_{\eta\rt\infty}\f_k^{\e}(t,\eta,\theta,\phi)&=&f_k^{\e}(t,\infty,\theta).\no
\end{array}
\right.
\end{eqnarray}
Define the $k^{th}$ order initial layer as
\begin{eqnarray}\label{expansion temp 24}
\left\{
\begin{array}{rcl}
\ub_{I,k}(\tau,\vx,\vw)&=&\ff_k^{\e}(\tau,\vx,\vw)-\ff_k^{\e}(\infty,\vx)\\
\p_{\tau}\bar\ff_k^{\e}&=&-\displaystyle\int_{\s^1}\bigg(\vw\cdot\nabla_x\ub_{I,k-1}\bigg)\ud{\vw},\\\rule{0ex}{1.5em}
\ff_k^{\e}(\tau,\vx,\vw)&=&\ue^{-\tau}\ff_k^{\e}(0,\vx,\vw)+\displaystyle\int_0^{\tau}\bigg(\bar\ff_k^{\e}-\vw\cdot\nabla_x\ub_{I,k-1}\bigg)(s,\vx,\vw)\ue^{s-\tau}\ud{s},\\
\ff_k^{\e}(0,\vx,\vw)&=&\vw\cdot\nx\u_{k-1}(0,\vx,\vw)+\dt\u_{k-2}(0,\vx,\vw),\\
\lim_{\tau\rt\infty}\ff_k^{\e}(\tau,\vx,\vw)&=&\ff_k^{\e}(\infty,\vx).
\end{array}
\right.
\end{eqnarray}
Define the $k^{th}$ order interior solution as
\begin{eqnarray}
\left\{
\begin{array}{rcl}
\u_k&=&\bu_k-\vw\cdot\nx\u_{k-1}-\dt\u_{k-2},\\\rule{0ex}{1em}
\dt\bu_k-\Delta_x\bu_k&=&0,\\\rule{0ex}{1em}\bu_k(0,\vx)&=&\ff_k^{\e}(\infty,\vx)\ \
\text{in}\ \ \Omega,\\\rule{0ex}{1em} \bu_k&=&f_k^{\e}(t,\infty,\theta)\
\ \text{on}\ \ \p\Omega.
\end{array}
\right.
\end{eqnarray}
When $g$ and $h$ are sufficiently smooth, then all the functions defined above are well-posed. The key point here is in the boundary layer, the source term including $\p_{\theta}\ub_{B,k}$ is in $L^{\infty}$ due to the substitution (\ref{substitution 4}).

\section{$\e$-Milne Problem}

In this section, we study the $\e$-Milne problem for
$f^{\e}(\eta,\theta,\phi)$ in the domain
$(\eta,\theta,\phi)\in[0,\infty)\times[-\pi,\pi)\times[-\pi,\pi)$
\begin{eqnarray}\label{Milne problem}
\left\{
\begin{array}{rcl}\displaystyle
\sin\phi\frac{\p f^{\e}}{\p\eta}+F(\e;\eta)\cos\phi\frac{\p
f^{\e}}{\p\phi}+f^{\e}-\bar f^{\e}&=&S^{\e}(\eta,\theta,\phi),\\
f^{\e}(0,\theta,\phi)&=&H^{\e}(\theta,\phi)\ \ \text{for}\ \ \sin\phi>0,\\\rule{0ex}{1.0em}
\lim_{\eta\rt\infty}f^{\e}(\eta,\theta,\phi)&=&f^{\e}_{\infty}(\theta),
\end{array}
\right.
\end{eqnarray}
where
\begin{eqnarray}\label{Milne average}
\bar
f^{\e}(\eta,\theta)=\frac{1}{2\pi}\int_{-\pi}^{\pi}f^{\e}(\eta,\theta,\phi)\ud{\phi},
\end{eqnarray}
\begin{eqnarray}
F(\e;\eta)=-\frac{\e\psi(\e\eta)}{1-\e\eta},
\end{eqnarray}
\begin{eqnarray}
\psi(\mu)=\left\{
\begin{array}{ll}
1&0\leq\mu\leq1/2,\\
0&3/4\leq\mu\leq\infty,
\end{array}
\right.
\end{eqnarray}
\begin{eqnarray}\label{Milne bounded}
\abs{H^{\e}(\theta,\phi)}\leq M,
\end{eqnarray}
and
\begin{eqnarray}\label{Milne decay}
\abs{S^{\e}(\eta,\theta,\phi)}\leq Me^{-K\eta},
\end{eqnarray}
for $M>0$ and $K>0$ uniform in $\e$ and $\theta$. In this section, since the key variables here are $\eta$ and $\phi$, we temporarily ignore the dependence on $\e$ and $\theta$. We define the norms in the
space $(\eta,\phi)\in[0,\infty)\times[-\pi,\pi)$ as follows:
\begin{eqnarray}
\tnnm{f}&=&\bigg(\int_0^{\infty}\int_{-\pi}^{\pi}\abs{f(\eta,\phi)}^2\ud{\phi}\ud{\eta}\bigg)^{1/2},\\
\lnnm{f}&=&\sup_{(\eta,\phi)\in[0,\infty)\times[-\pi,\pi)}\abs{f(\eta,\phi)}.
\end{eqnarray}
In \cite[Section 4]{AA003}, the authors proved the following results:
\begin{theorem}\label{Milne theorem 1}
There exists a unique solution $f(\eta,\phi)$ to the $\e$-Milne
problem (\ref{Milne problem}) satisfying
\begin{eqnarray}
\tnnm{f-f_{\infty}}\leq C\bigg(1+M+\frac{M}{K}\bigg).
\end{eqnarray}
\end{theorem}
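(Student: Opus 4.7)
The plan is to follow the strategy developed in \cite[Section 4]{AA003}: approximate \eqref{Milne problem} by truncating $\eta\in[0,L]$ and adding a penalty $\lambda f^\e$, solve the resulting problem by Duhamel iteration along the characteristics of $\sin\phi\,\p_\eta+F(\e;\eta)\cos\phi\,\p_\phi$ (analogous to Lemmas \ref{well-posedness lemma 1}--\ref{well-posedness lemma 2}), and pass $\lambda\rt0$ and $L\rt\infty$ using $L^2$ estimates that are uniform in both parameters.

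The central estimate uses the integrating factor $\mu(\eta):=\exp\big(\int_0^\eta F(\e;s)\,\ud s\big)$. Multiplying \eqref{Milne problem} by $f^\e$, integrating over $\phi\in[-\pi,\pi)$, and using periodicity together with $\p_\phi\cos\phi=-\sin\phi$ to integrate by parts in the geometric term, one arrives at
\begin{eqnarray*}
\frac{d}{d\eta}\bigg(\mu(\eta)\int_{-\pi}^{\pi}\sin\phi\,(f^\e)^2\,\ud\phi\bigg)+2\mu(\eta)\int_{-\pi}^{\pi}(f^\e-\bar f^\e)^2\,\ud\phi=2\mu(\eta)\int_{-\pi}^{\pi}S^\e f^\e\,\ud\phi.
\end{eqnarray*}
The explicit form $F(\e;\eta)=-\e\psi(\e\eta)/(1-\e\eta)$ gives $\mu(\eta)=1-\e\eta$ on $\{\psi=1\}$ and $\mu$ equal to a constant in $[1/4,1]$ for larger $\eta$, so $\mu$ is bounded above and below uniformly in $\e$. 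Integrating in $\eta$ over $[0,\infty)$, using $\int\sin\phi\,\ud\phi=0$ to kill the boundary term at infinity, splitting the trace at $\eta=0$ into an incoming piece controlled by $M$ via $H^\e$ and an outgoing piece to be bounded, and applying Cauchy--Schwarz to the source term (whose exponential decay contributes the factor $M/K$), one obtains uniform control on the outgoing trace at $\eta=0$ and on $\tnnm{f^\e-\bar f^\e}$ in terms of $M$ and $M/K$.

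To upgrade this to $\tnnm{f^\e-f^\e_\infty}$, I would integrate \eqref{Milne problem} in $\phi$ to obtain $(\mu J)'=\mu\int S^\e\,\ud\phi$ for $J(\eta):=\int\sin\phi\,f^\e\,\ud\phi$, select $f^\e_\infty$ by the solvability condition $J(\infty)=0$ (the $\e$-Milne analogue of the classical Milne compatibility), and combine with the Poincar\'e-type inequality on $\s^1$ to bound $\bar f^\e-f^\e_\infty$ in $L^2_\eta$ by $\tnnm{f^\e-\bar f^\e}$. The hard part is precisely the interplay between $F$ and the selection of $f^\e_\infty$: the geometric correction deforms the classical orthogonality relation, and one must verify that the deformed condition uniquely determines $f^\e_\infty$ with an estimate that is stable as $\e\rt0$. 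Uniqueness of the full problem then follows from linearity by applying the same estimate to the difference of two solutions with vanishing data and vanishing $f^\e_\infty$.
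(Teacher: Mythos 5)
The paper does not actually prove Theorem~\ref{Milne theorem 1}; it cites the result from \cite[Section~4]{AA003}. So you are reconstructing a proof, and the question is whether your sketch would go through.

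Your energy identity is correct and is indeed the right starting point: multiplying by $f^\e$, integrating in $\phi$, and using $\mu'=\mu F$ does yield
\begin{eqnarray*}
\frac{d}{d\eta}\bigg(\mu(\eta)\int_{-\pi}^{\pi}\sin\phi\,(f^\e)^2\,\ud\phi\bigg)+2\mu(\eta)\int_{-\pi}^{\pi}(f^\e-\bar f^\e)^2\,\ud\phi=2\mu(\eta)\int_{-\pi}^{\pi}S^\e f^\e\,\ud\phi,
\end{eqnarray*}
and your observation that $\mu(\eta)\in[1/4,1]$ uniformly in $\e$ (equal to $1-\e\eta$ on the region $\psi=1$ and constant thereafter) is also correct. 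Likewise, integrating in $\phi$ to get $(\mu J)'=\mu\int S^\e\,\ud\phi$ for $J=\int\sin\phi\,f^\e$ is right. So the mean-free estimate $\tnnm{f^\e-\bar f^\e}\lesssim M+M/K$ is within reach, modulo care with the $\int S^\e\bar f^\e$ piece, which cannot be closed by Cauchy--Schwarz alone since $\bar f^\e$ is not a priori in $L^2_\eta$; this is handled in \cite{AA003} on the truncated domain $[0,L]$ where the unknown is automatically controlled.

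The genuine gap is in the step from $\tnnm{f^\e-\bar f^\e}$ to $\tnnm{f^\e-f^\e_\infty}$. You propose to ``combine with the Poincar\'e-type inequality on $\s^1$ to bound $\bar f^\e-f^\e_\infty$ in $L^2_\eta$ by $\tnnm{f^\e-\bar f^\e}$.'' No such inequality can exist: both $\bar f^\e(\eta)$ and $f^\e_\infty$ are constant in $\phi$, so a Poincar\'e inequality in the angular variable (which compares a function on $\s^1$ to its angular mean via its angular derivative) carries no information whatsoever about $\bar f^\e-f^\e_\infty$. The two parts $\tnnm{f^\e-\bar f^\e}$ and $\tnnm{\bar f^\e-f^\e_\infty}$ are orthogonal pieces of $\tnnm{f^\e-f^\e_\infty}^2$ and must be estimated by different means. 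The actual mechanism in the Milne theory is to take a \emph{second} moment of the equation (multiply by $\sin\phi$ and integrate), producing an identity of the form
\begin{eqnarray*}
\pi\,\p_\eta\bar f^\e=-\p_\eta\!\int\sin^2\phi\,(f^\e-\bar f^\e)\,\ud\phi+F\!\int\cos(2\phi)\,(f^\e-\bar f^\e)\,\ud\phi-J+\int\sin\phi\,S^\e\,\ud\phi,
\end{eqnarray*}
and then to integrate this from $\eta$ to $\infty$ to express $\bar f^\e(\eta)-f^\e_\infty$ in terms of $f^\e-\bar f^\e$, $J$, and $S^\e$, using the exponential-decay results (Theorems~\ref{Milne theorem 3}--\ref{Milne theorem 4}) to make the tail integrals converge. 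Relatedly, the ``solvability condition $J(\infty)=0$'' is vacuous once $f^\e\rt f^\e_\infty$, since $\int\sin\phi\,\ud\phi=0$ makes it automatic; it does not select $f^\e_\infty$. The quantity $f^\e_\infty$ is determined from the data through the zero-flux structure on the truncated problem, not by imposing $J(\infty)=0$ at the end. Your outline therefore needs to replace the Poincar\'e step with the moment identity above (or an equivalent argument) before it closes.
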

\begin{theorem}\label{Milne theorem 2}
There exists a unique solution $f(\eta,\phi)$ to the $\e$-Milne
problem (\ref{Milne problem}) satisfying
\begin{eqnarray}
\lnnm{f-f_{\infty}}\leq C\bigg(1+M+\frac{M}{K}\bigg).
\end{eqnarray}
\end{theorem}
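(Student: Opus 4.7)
The strategy is to bootstrap the $L^2$ bound of Theorem \ref{Milne theorem 1} to an $L^\infty$ bound via a double-Duhamel iteration along the characteristics of the $\epsilon$-Milne operator, in direct analogy with Steps 1--4 of the proof of Theorem \ref{LI estimate}. Setting $g^\epsilon = f^\epsilon - f_\infty^\epsilon$, the limit value $|f_\infty^\epsilon|$ is already controlled by $C(1+M+M/K)$ through Theorem \ref{Milne theorem 1} together with a trace argument, so it suffices to estimate $\lnnm{g^\epsilon}$; note that $g^\epsilon$ solves the same transport equation with source $S^\epsilon$ and in-flow data $H^\epsilon - f_\infty^\epsilon$, both of size $O(1+M+M/K)$.

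The characteristic field $\sin\phi\,\p_\eta + F(\epsilon;\eta)\cos\phi\,\p_\phi$ preserves $(1-\epsilon\eta)\cos\phi$ on the region $\psi(\epsilon\eta)=1$, so its orbits are bounded arcs (possibly trapped) in the inner region and straight lines in the outer region $\epsilon\eta \geq 3/4$. Letting $\eta_b(\eta,\phi)$ denote the backward exit time to $\{\eta=0, \sin\phi>0\}$ (with $\eta_b=+\infty$ for trapped orbits), the mild formulation reads
\begin{equation*}
g^\epsilon(\eta,\phi) = \mathbf{1}_{\{\eta_b<\infty\}}\bigl(H^\epsilon - f_\infty^\epsilon\bigr)\ue^{-\eta_b} + \int_0^{\eta_b}\bigl(\bar g^\epsilon + S^\epsilon\bigr)\bigl(H(-s),\Phi(-s)\bigr)\ue^{-s}\,\ud{s},
\end{equation*}
whose boundary and source contributions are immediately $O(1+M+M/K)$. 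I would then substitute a second copy of this representation into $\bar g^\epsilon = \frac{1}{2\pi}\int g^\epsilon\,\ud{\phi_*}$, producing a triple integral of $g^\epsilon$ at points reached by following the primary characteristic for time $s$ and then a secondary characteristic in direction $\phi_*$ for time $r$.

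I would then split this triple integral via a small parameter $\delta > 0$ into a grazing part (where the inner time is within $\delta$ of its endpoint, or $|\sin\phi_*|<\delta$) and a nongrazing part. The grazing part has measure $O(\delta)$ and contributes at most $C\delta\lnnm{g^\epsilon}$. On the nongrazing part, for each fixed $s$ one changes variables $(\phi_*,r)\mapsto(\eta'',\phi_*)$ along the secondary characteristic; the Jacobian is $|\sin\phi_*|$ up to a bounded multiplicative factor coming from the geometric correction ($\psi_0$ keeps $1-\epsilon\eta$ bounded away from zero in the relevant region), so $|\sin\phi_*|>\delta$ together with the $\ue^{-s}\ue^{-r}$ integrability and Cauchy--Schwarz in $(\eta'',\phi_*)$ yields a bound $C\delta^{-1}\tnnm{g^\epsilon} \leq C\delta^{-1}(1+M+M/K)$ via Theorem \ref{Milne theorem 1}. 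Taking $\delta = 1/2$ absorbs the self-referential term $\delta\lnnm{g^\epsilon}$ into the left-hand side and yields the claimed estimate.

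The principal obstacle is verifying the Jacobian estimate uniformly in $\epsilon$ in the presence of curved characteristics and, especially, of trapped orbits for which $\eta_b = +\infty$. The trapped component must be treated separately by iterating the mild formulation around the closed orbit to extract an additional $\ue^{-(\text{period})}$ contraction factor, and on the nongrazing subset one must confirm that the characteristic-to-phase-space map remains proper with Jacobian bounded below by $c\delta$ despite the $F\cos\phi\,\p_\phi$ rotation. These are exactly the technical points handled in the steady analysis of \cite{AA003}, and importing them here completes the proof.
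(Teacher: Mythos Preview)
The paper does not supply its own proof of this statement; it is one of several results quoted verbatim from \cite[Section~4]{AA003}, as announced in the sentence immediately preceding Theorem~\ref{Milne theorem 1}. Your outline is essentially the argument carried out in that reference: reduce to $g^\epsilon=f^\epsilon-f_\infty^\epsilon$, write the mild formulation along the (curved) characteristics, insert a second Duhamel representation for $\bar g^\epsilon$, split off a small-measure grazing set, and on the complement perform the change of variables whose Jacobian is bounded below by $c\delta$ to convert the remaining term into $C\delta^{-1}\tnnm{g^\epsilon}$, which is controlled by Theorem~\ref{Milne theorem 1}. You have correctly flagged the two genuine technical issues---the uniform-in-$\epsilon$ Jacobian bound under the geometric correction $F(\epsilon;\eta)\cos\phi\,\p_\phi$, and the treatment of orbits that never reach $\eta=0$---and correctly located their resolution in \cite{AA003}. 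So your proposal is correct and coincides with the approach the paper defers to.
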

\begin{theorem}\label{Milne theorem 3}
For
$K_0>0$ sufficiently small, the solution $f(\eta,\phi)$ to the
$\e$-Milne problem (\ref{Milne problem}) satisfies
\begin{eqnarray}
\tnnm{e^{K_0\eta}(f-f_{\infty})}\leq C\bigg(1+M+\frac{M}{K}\bigg),
\end{eqnarray}
\end{theorem}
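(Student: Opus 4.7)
The plan is to mimic the $L^2$ energy argument underlying Theorem~\ref{Milne theorem 1} with the weight $e^{K_0\eta}$ baked in. Set $g := f - f_\infty$; since $f_\infty$ depends on neither $\eta$ nor $\phi$, $g$ solves the same $\e$-Milne equation, with source $S$ and in-flow data $H - f_\infty$ (whose $L^\infty$ norm is $\leq C(1 + M + M/K)$ by Theorem~\ref{Milne theorem 2}). Introduce the weighted unknown $h(\eta,\phi) := e^{K_0\eta} g(\eta,\phi)$; a direct computation shows
\begin{equation*}
\sin\phi\,\partial_\eta h + F(\e;\eta)\cos\phi\,\partial_\phi h + (1 - K_0\sin\phi)\,h - \bar h = e^{K_0\eta} S,
\end{equation*}
with $h(0,\phi) = H(\phi) - f_\infty$ for $\sin\phi > 0$ and (to be justified \emph{a posteriori}) $h \to 0$ as $\eta \to \infty$.

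Then I would multiply by $h$ and integrate over $[0,\infty) \times [-\pi,\pi)$. Integration by parts in $\eta$ on $\tfrac12 \sin\phi\,\partial_\eta h^2$ yields the boundary contribution $-\tfrac12 \int \sin\phi\,h(0,\phi)^2 d\phi$, bounded by $C M^2$ using the bound on $H - f_\infty$ together with the favorable sign on $\{\sin\phi < 0\}$. Integration by parts in $\phi$ on the $F$-term, via $\cos\phi\,\partial_\phi h^2 = \partial_\phi(\cos\phi\,h^2) + \sin\phi\,h^2$, produces $\tfrac12 \iint F \sin\phi\,h^2 = O(\e)\tnnm{h}^2$ since $|F|\leq C\e$. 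The absorption term $h^2 - \bar h\,h$ integrates in $\phi$ to the key positive quantity $(h - \bar h)^2$; the new $K_0$-term contributes $O(K_0)\tnnm{h}^2$; and the source is handled via Cauchy--Schwarz and $\tnnm{e^{K_0\eta} S} \leq CM/\sqrt{K - 2K_0}$ provided $2K_0 < K$. Collecting,
\begin{equation*}
\tnnm{h - \bar h}^2 \leq C M^2 + C(K_0 + \e)\tnnm{h}^2 + \tfrac{CM}{\sqrt{K}}\tnnm{h}.
\end{equation*}

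The main obstacle, inherited from the proof of Theorem~\ref{Milne theorem 1}, is to upgrade this orthogonality bound to control of the full $\tnnm{h}$, i.e.\ to handle the macroscopic part $\bar h$. I would close the gap by the moment-based test-function construction of \cite{AA003}: averaging the equation in $\phi$ yields the scalar identity $\partial_\eta \overline{\sin\phi\,h} + (F - K_0)\overline{\sin\phi\,h} = \overline{e^{K_0\eta} S}$, an ODE whose solution inherits exponential decay from $S$; coupling this with a well-chosen test function of the form $\chi(\eta)\sin\phi$, with $\chi$ built from $\bar h$, converts microscopic control of $h - \bar h$ into macroscopic control of $\bar h$. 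The single new ingredient compared to the unweighted case is the perturbation $-K_0\overline{\sin\phi\,h}$, which is lower order when $K_0$ is small. Picking $K_0$ small enough so that $K_0 + \e$ falls below the absorption threshold and $2K_0 < K$, the claimed estimate with constant $C(1 + M + M/K)$ follows.
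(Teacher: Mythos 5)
The paper does not itself prove Theorem~\ref{Milne theorem 3}; it defers all five Milne theorems to \cite[Section~4]{AA003}, so there is no in-paper proof to compare against. Your plan --- conjugate by $\ue^{K_0\eta}$, redo the $L^2$ energy argument for $h=\ue^{K_0\eta}(f-f_\infty)$, treat the new contributions as perturbations --- is the natural route, and the microscopic bookkeeping is correct: the out-flow boundary term has a favorable sign, the $F$-term becomes $\tfrac12\iint F\sin\phi\,h^2=O(\e)\tnnm{h}^2$ after integrating by parts in $\phi$, the coercive quantity $\int(h-\bar h)^2$ survives, and the new $-K_0\sin\phi\,h$ contributes $O(K_0)\tnnm{h}^2$. (Two minor slips: $\tnnm{\ue^{K_0\eta}S}\lesssim M/\sqrt{K-K_0}$, not $M/\sqrt{K-2K_0}$; and the constraint $2K_0<K$ is actually needed when inverting the first-moment ODE, not at the source term.)

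The step that must be made precise is the macroscopic estimate, because the closing remark that the $K_0$-perturbation is ``lower order'' is too optimistic as stated. The auxiliary $\chi$ solving $\chi'+K_0\chi=-\bar h$ with $\chi(\infty)=0$ obeys only $\nm{\chi}_{L^2_\eta}\leq K_0^{-1}\nm{\bar h}_{L^2_\eta}$, and testing the weighted equation against $\chi(\eta)\sin\phi$ produces the term $2\pi\int\chi\,\overline{\sin\phi\,h}\,\ud{\eta}$, in which $\chi$ carries no compensating factor of $K_0$ or $\e$. If you bound $\overline{\sin\phi\,h}$ merely by $\nm{h-\bar h}_{L^2_\phi}$, this term has size $K_0^{-1}\nm{\bar h}\,\tnnm{h-\bar h}$ and cannot be absorbed as $K_0\to0$. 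The argument closes only because $\overline{\sin\phi\,h}$ is controlled \emph{independently of $h$}, directly by the exponentially decaying source, through the very first-moment ODE you write down. You list both ingredients --- the ODE and the $\chi(\eta)\sin\phi$ test function --- but do not flag that this is exactly where they must be coupled; that coupling is the single nontrivial point and should be spelled out. A secondary technical point: the circular $h\to0$ at $\eta=\infty$ should be justified by truncating to $\eta\in[0,L]$, proving $L$-uniform estimates, and sending $L\to\infty$.
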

\begin{theorem}\label{Milne theorem 4}
For
$K_0>0$ sufficiently small, the solution $f(\eta,\phi)$ to the
$\e$-Milne problem (\ref{Milne problem}) satisfies
\begin{eqnarray}
\lnnm{e^{K_0\eta}(f-f_{\infty})}\leq C\bigg(1+M+\frac{M}{K}\bigg),
\end{eqnarray}
\end{theorem}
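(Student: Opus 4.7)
The plan is to bootstrap the weighted $L^2$ decay of Theorem \ref{Milne theorem 3} up to a weighted $L^\infty$ decay via a double Duhamel iteration along the characteristics of the $\e$-Milne operator, in the same spirit as the passage from Theorem \ref{Milne theorem 1} to Theorem \ref{Milne theorem 2}, and parallel to Step 3 in the proof of Theorem \ref{LI estimate}.

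Set $g(\eta,\phi) := e^{K_0\eta}(f(\eta,\phi) - f_\infty)$. Multiplying the Milne equation by $e^{K_0\eta}$ and using that $f_\infty$ is independent of $\phi$, a direct computation yields
$$\sin\phi\,\partial_\eta g + F(\e;\eta)\cos\phi\,\partial_\phi g + g - \bar g = e^{K_0\eta}S + K_0\sin\phi\cdot g,$$
with boundary condition $g(0,\phi) = e^{0}(H^\e(\phi) - f_\infty)$ for $\sin\phi > 0$ and $g \to 0$ as $\eta \to \infty$. For $K_0 < K$ the new inhomogeneity inherits exponential decay, $\abs{e^{K_0\eta}S} \leq M e^{-(K-K_0)\eta}$; the extra feedback term $K_0\sin\phi\cdot g$ is a small perturbation that we shall absorb at the end.

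Next, I integrate this equation along the characteristics of the transport operator $\sin\phi\,\partial_\eta + F(\e;\eta)\cos\phi\,\partial_\phi$, which preserves $(1-\e\eta)\cos\phi$ wherever $\psi(\e\eta) = 1$, so the characteristic picture is well-behaved uniformly in $\e$. This produces a Duhamel representation of $g$ in terms of boundary or limit data, the inhomogeneity $e^{K_0\eta}S + K_0\sin\phi\cdot g$, and an integral of $\bar g$ along the characteristic. Substituting the same representation once more into the inner copy of $g$ hidden in $\bar g$ yields an iterated formula whose sole remaining instance of $g$ is an integral of $\bar g$ over a two-dimensional region of phase space. I then split this region exactly as in Step 3 of the proof of Theorem \ref{LI estimate}: on the small set where characteristics become nearly tangential to the boundary (measured by a parameter $\delta > 0$), a crude bound by $\delta\lnnm{g}$ suffices; on its complement, the change of variables to the physical $(\eta,\phi)$ coordinates has nondegenerate Jacobian, so Cauchy--Schwarz controls the integral by $C \tnnm{g}$, which is bounded by Theorem \ref{Milne theorem 3}.

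Collecting all contributions leads to a pointwise inequality of the form
$$\abs{g(\eta,\phi)} \leq (\delta + C K_0)\lnnm{g} + C\!\left(1 + M + \frac{M}{K}\right).$$
Taking the supremum over $(\eta,\phi)$, fixing $\delta$ small, and then choosing $K_0$ small enough that $\delta + C K_0 < 1$, absorbs $\lnnm{g}$ on the left and yields the claim. The main obstacle is precisely this absorption: the feedback perturbation $K_0\sin\phi\cdot g$ introduced by the exponential weight threatens the contraction structure, and both Theorem \ref{Milne theorem 3} and the present argument require $K_0$ to be fixed sufficiently small, uniformly in $\e$. Keeping the constants uniform in $\e$ is exactly where the conservation of $(1-\e\eta)\cos\phi$ along characteristics of the geometric correction $F(\e;\eta)$ is essential.
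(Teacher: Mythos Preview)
The paper does not actually contain a proof of Theorem \ref{Milne theorem 4}: all of Theorems \ref{Milne theorem 1}--\ref{Milne theorem 5} are merely quoted, with the proofs deferred to \cite[Section 4]{AA003}. So there is no ``paper's own proof'' to compare against here.

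That said, your outline is exactly the approach used in \cite{AA003} and is consistent with the $L^2\to L^\infty$ bootstrap carried out in the present paper for the full transport equation (Theorem \ref{LI estimate}): set $g=e^{K_0\eta}(f-f_\infty)$, derive the perturbed Milne equation, apply a double Duhamel iteration along the characteristics of $\sin\phi\,\partial_\eta+F(\e;\eta)\cos\phi\,\partial_\phi$, split the inner integral into a thin tangential region (controlled by $\delta\lnnm{g}$) and its complement (where the change of variables has nondegenerate Jacobian and Cauchy--Schwarz reduces to $\tnnm{g}$, bounded by Theorem \ref{Milne theorem 3}), and finally absorb the $(\delta+CK_0)\lnnm{g}$ term by choosing $\delta$ and $K_0$ small. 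Your identification of the conserved quantity $(1-\e\eta)\cos\phi$ along characteristics (in the region $\psi(\e\eta)=1$) is correct and is precisely what keeps the Jacobian estimate uniform in $\e$.
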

\begin{theorem}\label{Milne theorem 5}
The solution $f(\eta,\phi)$ to the $\e$-Milne problem (\ref{Milne
problem}) with $S=0$ satisfies the maximum principle, i.e.
\begin{eqnarray}
\min_{\sin\phi>0}h(\phi)\leq f(\eta,\phi)\leq
\max_{\sin\phi>0}h(\phi).
\end{eqnarray}
\end{theorem}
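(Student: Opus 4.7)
The plan is to mirror the penalization-plus-iteration argument used for Theorem~\ref{maximum principle}. Since any constant $c$ is annihilated by the $\e$-Milne operator (because $\bar c = c$ and the transport part kills constants), both $f$ and $f-c$ solve the homogeneous problem with boundary data shifted by $-c$. Hence the theorem reduces to the one-sided assertion: if $h(\phi)\leq 0$ on $\{\sin\phi>0\}$, then the Milne solution satisfies $f(\eta,\phi)\leq 0$ everywhere. Applying this to $f-M$ with $M=\max_{\sin\phi>0}h$ gives $f\leq M$, and applying it to $m-f$ with $m=\min_{\sin\phi>0}h$ gives $f\geq m$.

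For the one-sided statement, I would introduce the penalized $\e$-Milne iteration starting from $f_\lambda^0 \equiv 0$,
\begin{equation*}
\lambda f_\lambda^k + \sin\phi\,\p_\eta f_\lambda^k + F(\e;\eta)\cos\phi\,\p_\phi f_\lambda^k + f_\lambda^k = \bar f_\lambda^{k-1},
\end{equation*}
imposing $f_\lambda^k(0,\phi)=h(\phi)$ on $\sin\phi>0$ and the prescribed behavior at $\eta=\infty$. Each step is a first-order linear transport along the characteristics of the vector field $(\sin\phi,\,F(\e;\eta)\cos\phi)$ and can be integrated explicitly: $f_\lambda^k(\eta,\phi)$ is a sum of an exponentially damped boundary term (either from $h$ at $\eta=0$ or from the prescribed value at $\eta=\infty$, according to the backward trajectory) plus a positively weighted Duhamel integral of $\bar f_\lambda^{k-1}$. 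Since $h\leq 0$, induction gives $\bar f_\lambda^{k-1}\leq 0$, and the exponential weights and factor $1+\lambda>0$ are positive, every iterate stays non-positive. A geometric contraction estimate analogous to Lemma~\ref{well-posedness lemma 2} then produces a non-positive limit $f_\lambda$.

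Finally, by the strong $L^2$/$L^\infty$ convergence behind Theorems~\ref{Milne theorem 1}--\ref{Milne theorem 4}, together with uniqueness of the Milne problem, $f_\lambda\to f$ as $\lambda\to 0$ and the sign is preserved, giving $f\leq 0$ as desired. The main obstacle is the characteristic analysis: unlike the straight characteristics used for Theorem~\ref{maximum principle}, the correction $F(\e;\eta)\cos\phi$ deflects trajectories in $\phi$, so a backward characteristic issuing from an interior point $(\eta,\phi)$ may reach the in-flow boundary $\{\eta=0,\sin\phi>0\}$, turn around and exit through the same boundary after a finite arc, or escape to $\eta=\infty$. One must classify these alternatives using the conserved energy of the flow that underlies the $\e$-Milne construction in \cite{AA003}, verify that the backward trajectory always hits an admissible terminal, and check that the sign of the Duhamel integrand is controlled along the entire traversed arc.
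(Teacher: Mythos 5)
The paper does not actually prove this theorem; it is quoted verbatim from \cite[Section 4]{AA003}, so there is no in-paper argument to compare against. That said, your plan closely mirrors the paper's own Theorem~\ref{maximum principle} (the transport-equation maximum principle), and the high-level structure is sound: constants are annihilated by the $\e$-Milne operator because $\bar c = c$ and $\partial_\eta c = \partial_\phi c = 0$, so the two-sided bound reduces cleanly to the one-sided assertion ``$h\le 0$ implies $f\le 0$,'' and the penalization-plus-iteration scaffolding is the right template.

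Two genuine gaps remain. First, you flag the characteristic classification as ``the main obstacle'' but do not resolve it, and it is precisely there that the $\e$-Milne problem departs from the flat one: the force $F(\e;\eta)\cos\phi$ produces curved trajectories conserving $E=\cos\phi\,\ue^{-V(\eta)}$, and a backward trajectory with $\sin\phi<0$ may escape to $\eta=\infty$, or U-turn through the grazing set $\sin\phi=0$ (where the traversal time diverges) and return to $\{\eta=0,\ \sin\phi>0\}$. One must verify that along every such arc the Duhamel representation converges, that the $\eta=\infty$ contribution is suppressed by the exponential weight, and that the positivity of the kernel persists through the turning point; this verification is the crux of the lemma and is absent. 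Second, the passage $\l\to 0$ is not as automatic as you state. The contraction analogous to Lemma~\ref{well-posedness lemma 2} only yields $\|f_\l\|\le\frac{1+\l}{\l}\|f_\l^1\|$, which blows up as $\l\to 0$; a uniform-in-$\l$ bound must instead come from an energy / $L^\infty$-bootstrap argument as in Section~2. Moreover, Theorems~\ref{Milne theorem 1}--\ref{Milne theorem 4} assert properties of the $\l=0$ Milne solution $f$ itself, not that the penalized approximants $f_\l$ converge to $f$; invoking them to close the limit presupposes that the Milne solution was constructed by this very penalization, which the present paper nowhere states.
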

\begin{remark}\label{Milne remark}
Note that when $F=0$, Theorem \ref{Milne theorem 1}, Theorem \ref{Milne theorem 2}, Theorem \ref{Milne theorem 3}, Theorem \ref{Milne theorem 4}, and
Theorem \ref{Milne theorem 5} still hold. Hence, we can deduce the
well-posedness, decay and maximum principle of the classical Milne
problem
\begin{eqnarray}\label{classical Milne problem}
\left\{
\begin{array}{rcl}\displaystyle
\sin\phi\frac{\p f}{\p\eta}+f-\bar f&=&S(\eta,\phi),\\
f(0,\phi)&=&h(\phi)\ \ \text{for}\ \ \sin\phi>0,\\
\lim_{\eta\rt\infty}f(\eta,\phi)&=&f_{\infty}.
\end{array}
\right.
\end{eqnarray}
\end{remark}
%
%

\section{Diffusive Limit}

In this section, we prove the first part of Theorem \ref{main 2}.
\begin{theorem}
Assume $g(t,\vx_0,\vw)\in C^4([0,\infty)\times\Gamma^-)$ and
$h(\vx,\vw)\in C^4(\Omega\times\s^1)$. Then for the unsteady neutron transport
equation (\ref{transport}), the unique solution
$u^{\e}(t,\vx,\vw)\in L^{\infty}([0,\infty)\times\Omega\times\s^1)$
satisfies
\begin{eqnarray}\label{main theorem 2}
\lnm{u^{\e}-\u_0-\ub_{I,0}-\ub_{B,0}}=o(1),
\end{eqnarray}
where the interior solution $\u_0$ is
defined in (\ref{expansion temp 8}), the initial layer $\ub_{I,0}$ is defined in (\ref{expansion temp 21}), and the boundary layer $\ub_{B,0}$ is defined in (\ref{expansion temp 9}).
\end{theorem}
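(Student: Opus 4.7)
I will use the classical remainder-estimate strategy adapted to the combined interior/initial-layer/boundary-layer expansion with geometric correction. Define the approximate solution
\[
Q^{\e}(t,\vx,\vw) = \sum_{k=0}^{2}\e^k\bigl(\u_k + \ub_{I,k} + \ub_{B,k}\bigr),
\]
using the hierarchy constructed in Section 3. The remainder $R^{\e} = u^{\e} - Q^{\e}$ satisfies
\[
\e^2\dt R^{\e} + \e\vw\cdot\nx R^{\e} + R^{\e} - \bar R^{\e} = \mathcal{R}^{\e}
\]
with residual initial and in-flow boundary data $R^{\e}(0,\vx,\vw)$ and $R^{\e}(t,\vx_0,\vw)$. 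By the matching performed in the construction of $\u_k, \ub_{I,k}, \ub_{B,k}$, and crucially by the improved compatibility condition (\ref{improved compatibility condition}) which forces $\ub_{B,0}(0,\cdot)\equiv 0$ and $\ub_{I,0}(\cdot,\vx_0,\vw)\equiv 0$, these residual data vanish through second order in $\e$.

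The next step is to identify the structure of $\mathcal{R}^{\e}$. It splits into: (a) genuine third-order truncation terms such as $\e^3\dt\u_2$, $\e^3\vw\cdot\nx\u_2$, $\e^3\dt\ub_{B,2}$, and analogous initial-layer pieces; (b) cut-off commutator terms where $\psi_0'(\e\eta)$ hits a boundary-layer function, supported in $\e\eta\in[1/4,3/8]$; and (c) the mismatch between the true transport coefficient $-\e/(1-\e\eta)$ and the truncated one $F(\e;\eta) = -\e\psi(\e\eta)/(1-\e\eta)$ used in the $\e$-Milne equation, supported in $\e\eta\geq 1/2$. Both (b) and (c) occur in regions where $\f_k^{\e}-f_k^{\e}(t,\infty,\theta)$ is exponentially small by Theorems \ref{Milne theorem 3}--\ref{Milne theorem 4}, and a similar structure applies to the initial-layer pieces via their exponential decay in $\tau$.

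With these decompositions in hand, I would bound each contribution using the substitutions $\mu=\e\eta$ (contributing $\e^{1/2}$ in $L^2_x$) and $\tau=t/\e^2$ (contributing $\e$ in $L^2_t$), together with the exponential decay estimates. The target is
\[
\nm{\mathcal{R}^{\e}}_{L^2([0,\infty)\times\Omega\times\s^1)} = o(\e^4), \qquad \nm{\mathcal{R}^{\e}}_{L^{\infty}([0,\infty)\times\Omega\times\s^1)} = o(1),
\]
plus the analogous $o(1)$ control of the residual initial and boundary data in $L^{\infty}$. Theorem \ref{improved LI estimate} then yields $\lnm{R^{\e}} = o(1)$, and since $\lnm{\e^k\u_k} + \lnm{\e^k\ub_{I,k}} + \lnm{\e^k\ub_{B,k}} = O(\e)$ for $k=1,2$, triangle inequality gives (\ref{main theorem 2}).

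The principal obstacle is the factor $\e^{-4}$ in the well-posedness estimate of Theorem \ref{improved LI estimate}: one must extract four powers of $\e$ from the source even though the only explicit gain is the $\e^3$ prefactor in the truncation errors (a), with an additional half power from the boundary-layer $L^2$-scaling. For the cut-off commutator terms in (b) this is tight, and one must use the exponential decay $e^{-K_0\eta}\leq e^{-K_0/(4\e)}$ on $\e\eta\geq 1/4$ to absorb the $\e^{-4}$ amplification with room to spare. A secondary, more combinatorial difficulty is verifying that no lower-order initial-boundary corner layer is needed: this is exactly the role played by (\ref{improved compatibility condition}), and I would verify it carefully before beginning the quantitative estimates.
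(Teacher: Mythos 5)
Your plan follows the paper's remainder-estimate strategy in broad outline, but there are two quantitative gaps that would make it fail as written. First, truncating at $N=2$ is not enough. The residual $\mathcal{R}^{\e}$ then contains genuine interior truncation terms such as $\e^{3}\dt\u_{2}$ and $\e^{3}\vw\cdot\nx\u_{2}$, which are supported throughout $\Omega$ — there is no boundary-layer or initial-layer support gain available for them — so in $L^{2}([0,\infty)\times\Omega\times\s^1)$ they are $O(\e^{3})$, not $o(\e^{4})$. Feeding $O(\e^{3})$ into Theorem \ref{improved LI estimate} with its $\e^{-4}$ amplification yields $O(\e^{-1})$, which diverges. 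Your target threshold $\nm{\mathcal{R}^{\e}}_{L^{2}}=o(\e^{4})$ is correct, but it cannot be reached from $N=2$; the paper expands to $N=4$, so the worst interior truncation terms are $O(\e^{5})$ in $L^{\infty}$ (hence in $L^{2}$ after time decay), and $\e^{-4}\cdot\e^{5}=\e$.

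Second, the claim that the residual initial and in-flow boundary data "vanish through second order in $\e$" is incorrect. The construction in Section 3 imposes \emph{only} zeroth-order matching, so for $k\geq1$ the remainder picks up the unmatched traces of the higher-order boundary layers at $t=0$ and of the higher-order initial layers at $\vx_{0}\in\p\Omega$. In the paper's notation $R_{4}(0,\vx,\vw)=\sum_{k=1}^{4}\e^{k}\ub_{B,k}(0,\vx,\vw)$ and $R_{4}(t,\vx_{0},\vw)=\sum_{k=1}^{4}\e^{k}\ub_{I,k}(t,\vx_{0},\vw)$, both $O(\e)$. That $O(\e)$ is tolerable precisely because Theorem \ref{improved LI estimate} controls the solution by the $L^{\infty}$ norm of the initial and boundary data with constant $1$ (no power of $\e^{-1}$) — which is the point of proving the maximum principle and Corollary \ref{wellposedness estimate 2} rather than using the raw bound in Theorem \ref{LI estimate}. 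The improved compatibility condition (\ref{improved compatibility condition}) eliminates only the zeroth-order corner layer; it does not make the residual data vanish to higher order, and your argument needs to account for the $O(\e)$ data contribution explicitly.
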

\begin{proof}
We divide the proof into several steps:\\
\ \\
Step 1: Remainder definitions.\\
We may rewrite the asymptotic expansion as follows:
\begin{eqnarray}
u^{\e}&\sim&\sum_{k=0}^{\infty}\e^k\u_k+\sum_{k=0}^{\infty}\e^k\ub_{I,k}+\sum_{k=0}^{\infty}\e^k\ub_{B,k}.
\end{eqnarray}
The remainder can be defined as
\begin{eqnarray}\label{pf 1}
R_N&=&u^{\e}-\sum_{k=0}^{N}\e^k\u_k-\sum_{k=0}^{N}\e^k\ub_{I,k}-\sum_{k=0}^{N}\e^k\ub_{B,k}=u^{\e}-\q_N-\qb_{I,N}-\qb_{B,N},
\end{eqnarray}
where
\begin{eqnarray}
\q_N&=&\sum_{k=0}^{N}\e^k\u_k,\\
\qb_{I,N}&=&\sum_{k=0}^{N}\e^k\ub_{I,k},\\
\qb_{B,N}&=&\sum_{k=0}^{N}\e^k\ub_{B,k}.
\end{eqnarray}
Noting the equation is equivalent to the
equations (\ref{initial temp}) and (\ref{transport temp}), we write $\ll$ to denote the neutron
transport operator as follows:
\begin{eqnarray}
\ll u&=&\e^2\dt u+\e\vw\cdot\nx u+u-\bar u\\
&=&\p_{\tau}u+\e\vw\cdot\nabla_xu+u-\bar u\\
&=&\e^2\frac{\p u}{\p t}+\sin\phi\frac{\p
u}{\p\eta}-\frac{\e}{1-\e\eta}\cos\phi\bigg(\frac{\p
u}{\p\phi}+\frac{\p u}{\p\theta}\bigg)+u-\bar u.\nonumber
\end{eqnarray}
\ \\
Step 2: Estimates of $\ll \q_N$.\\
The interior contribution can be estimated as
\begin{eqnarray}
\ll\q_0&=&\e^2\dt\q_0+\e\vw\cdot\nx \q_0+\q_0-\bar
\q_0\\
&=&\e^2\dt\u_0+ \e\vw\cdot\nx
\u_0+(\u_0-\bu_0)=\e^2\dt\u_0+\e\vw\cdot\nx \u_0.\no
\end{eqnarray}
We have
\begin{eqnarray}
\abs{\e^2\dt\u_0}&\leq&C\e^2\abs{\dt\u_0}\leq C\e^2,\\
\abs{\e\vw\cdot\nx \u_0}&\leq& C\e\abs{\nx \u_0}\leq C\e.
\end{eqnarray}
This implies
\begin{eqnarray}
\abs{\ll \q_0}\leq C\e.
\end{eqnarray}
Similarly, for higher order term, we can estimate
\begin{eqnarray}
\ll\q_N=\e^2\dt\q_N+\e\vw\cdot\nx \q_N+\q_N-\bar
\q_N&=&\e^{N+2}\dt\u_N+\e^{N+1}\vw\cdot\nx \u_N.
\end{eqnarray}
We have
\begin{eqnarray}
\abs{\e^{N+1}\dt\u_N}&\leq&C\e^{N+2}\abs{\dt\u_N}\leq C\e^{N+2},\\
\abs{\e^{N+1}\vw\cdot\nx \u_N}&\leq& C\e^{N+1}\abs{\nx \u_N}\leq
C\e^{N+1}.
\end{eqnarray}
This implies
\begin{eqnarray}\label{pf 2}
\abs{\ll \q_N}\leq C\e^{N+1}.
\end{eqnarray}
\ \\
Step 3: Estimates of $\ll \qb_{I,N}$.\\
The initial layer contribution can be estimated as
\begin{eqnarray}
\ll\qb_{I,0}&=&\p_{\tau}\qb_{I,0}+\e\vw\cdot\nabla_x\qb_{I,0}+\qb_{I,0}-\bar \qb_{I,0}\\
&=&\p_{\tau}\ub_{I,0}+\e\vw\cdot\nabla_x\ub_{I,0}+\ub_{I,0}-\bar \ub_{I,0}=\e\nabla_x\ub_{I,0}.\no
\end{eqnarray}
Based on the smoothness of $\ub_{I,0}$, we have
\begin{eqnarray}
\abs{\ll\qb_{I,0}}=\abs{\e\nabla_x\ub_{I,0}}\leq C\e.
\end{eqnarray}
Similarly, we have
\begin{eqnarray}
\ll\qb_{I,N}&=&\p_{\tau}\qb_{I,N}+\e\vw\cdot\nabla_x\qb_{I,N}+\qb_{I,N}-\bar \qb_{I,N}=\e^{N+1}\nabla_x\ub_{I,N}.
\end{eqnarray}
Therefore, we have
\begin{eqnarray}\label{pf 4}
\abs{\ll\qb_{I,N}}=\abs{\e^{N+1}\nabla_x\ub_{I,N}}\leq C\e^{N+1}.
\end{eqnarray}
\ \\
Step 4: Estimates of $\ll \qb_{B,N}$.\\
The boundary layer solution is
$\ub_k=(f_k^{\e}-f_k^{\e}(\infty))\cdot\psi_0=\v_k\psi_0$ where
$f_k^{\e}(\eta,\theta,\phi)$ solves the $\e$-Milne problem and
$\v_k=f_k^{\e}-f_k^{\e}(\infty)$. Notice $\psi_0\psi=\psi_0$, so the
boundary layer contribution can be estimated as
\begin{eqnarray}\label{remainder temp 1}
\\
\ll\qb_{B,0}&=&\e^2\frac{\p \qb_{B,0}}{\p t}+\sin\phi\frac{\p
\qb_{B,0}}{\p\eta}-\frac{\e}{1-\e\eta}\cos\phi\bigg(\frac{\p
\qb_{B,0}}{\p\phi}+\frac{\p \qb_{B,0}}{\p\theta}\bigg)+\qb_{B,0}-\bar
\qb_{B,0}\no\\
&=&\e^2\frac{\p \v_0}{\p t}+\sin\phi\bigg(\psi_0\frac{\p
\v_0}{\p\eta}+\v_0\frac{\p\psi_0}{\p\eta}\bigg)-\frac{\psi_0\e}{1-\e\eta}\cos\phi\bigg(\frac{\p
\v_0}{\p\phi}+\frac{\p \v_0}{\p\theta}\bigg)+\psi_0 \v_0-\psi_0\bar\v_0\nonumber\\
&=&\e^2\frac{\p \v_0}{\p t}+\sin\phi\bigg(\psi_0\frac{\p
\v_0}{\p\eta}+\v_0\frac{\p\psi_0}{\p\eta}\bigg)-\frac{\psi_0\psi\e}{1-\e\eta}\cos\phi\bigg(\frac{\p
\v_0}{\p\phi}+\frac{\p \v_0}{\p\theta}\bigg)+\psi_0 \v_0-\psi_0\bar\v_0\nonumber\\
&=&\e^2\frac{\p \v_0}{\p t}+\psi_0\bigg(\sin\phi\frac{\p
\v_0}{\p\eta}-\frac{\e\psi}{1-\e\eta}\cos\phi\frac{\p
\v_0}{\p\phi}+\v_0-\bar\v_0\bigg)+\sin\phi
\frac{\p\psi_0}{\p\eta}\v_0-\frac{\psi_0\e}{1-\e\eta}\cos\phi\frac{\p
\v_0}{\p\theta}\nonumber\\
&=&\e^2\frac{\p \v_0}{\p t}+\sin\phi
\frac{\p\psi_0}{\p\eta}\v_0-\frac{\psi_0\e}{1-\e\eta}\cos\phi\frac{\p
\v_0}{\p\theta}\nonumber.
\end{eqnarray}
It is easy to see
\begin{eqnarray}
\abs{\e^2\frac{\p \v_0}{\p t}}\leq \e^2\abs{\frac{\p \v_0}{\p
t}}\leq C\e^2.
\end{eqnarray}
Since $\psi_0=1$ when $\eta\leq 1/(4\e)$, the effective region of
$\px\psi_0$ is $\eta\geq1/(4\e)$ which is further and further from
the origin as $\e\rt0$. By Theorem \ref{Milne theorem 2}, the first
term in (\ref{remainder temp 1}) can be controlled as
\begin{eqnarray}
\abs{\sin\phi\frac{\p\psi_0}{\p\eta}\v_0}&\leq&
C\ue^{-\frac{K_0}{\e}}\leq C\e.
\end{eqnarray}
For the second term in (\ref{remainder temp 1}), we have
\begin{eqnarray}
\abs{-\frac{\psi_0\e}{1-\e\eta}\cos\phi\frac{\p
\v_0}{\p\theta}}&\leq&C\e\abs{\frac{\p \v_0}{\p\theta}}\leq C\e.
\end{eqnarray}
This implies
\begin{eqnarray}
\abs{\ll \qb_{B,0}}\leq C\e.
\end{eqnarray}
Similarly, for higher order term, we can estimate
\begin{eqnarray}\label{remainder temp 2}
\ll\qb_{B,N}&=&\e^2\frac{\p \qb_{B,N}}{\p t}+\sin\phi\frac{\p
\qb_{B,N}}{\p\eta}-\frac{\e}{1-\e\eta}\cos\phi\bigg(\frac{\p
\qb_{B,N}}{\p\phi}+\frac{\p \qb_{B,N}}{\p\theta}\bigg)+\qb_{B,N}-\bar
\qb_{B,N}\\
&=&\e^{N+2}\frac{\p \v_N}{\p t}+\sum_{i=0}^k\e^i\sin\phi
\frac{\p\psi_0}{\p\eta}\v_i-\frac{\psi_0\e^{k+1}}{1-\e\eta}\cos\phi\frac{\p
\v_k}{\p\theta}\nonumber.
\end{eqnarray}
It is obvious that
\begin{eqnarray}
\abs{\e^{N+2}\frac{\p \v_N}{\p t}}\leq \e^{N+2}\abs{\frac{\p
\v_N}{\p t}}\leq C\e^{N+2}.
\end{eqnarray}
Away from the origin, the first term in (\ref{remainder temp 2}) can
be controlled as
\begin{eqnarray}
\abs{\sum_{i=0}^k\e^i\sin\phi \frac{\p\psi_0}{\p\eta}\v_i}&\leq&
C\ue^{-\frac{K_0}{\e}}\leq C\e^{k+1}.
\end{eqnarray}
For the second term in (\ref{remainder temp 2}), we have
\begin{eqnarray}
\abs{-\frac{\psi_0\e^{k+1}}{1-\e\eta}\cos\phi\frac{\p
\v_k}{\p\theta}}&\leq&C\e^{k+1}\abs{\frac{\p \v_k}{\p\theta}}\leq
C\e^{k+1}.
\end{eqnarray}
This implies
\begin{eqnarray}\label{pf 3}
\abs{\ll \qb_{B,N}}\leq C\e^{k+1}.
\end{eqnarray}
\ \\
Step 5: Synthesis.\\
In summary, since $\ll u^{\e}=0$, collecting (\ref{pf 1}), (\ref{pf
2}), (\ref{pf 4}), and (\ref{pf 3}), we can prove
\begin{eqnarray}
\abs{\ll R_N}\leq C\e^{N+1}.
\end{eqnarray}
Consider the asymptotic expansion to $N=4$, then the remainder $R_4$
satisfies the equation
\begin{eqnarray}
\\
\left\{
\begin{array}{rcl}
\e\dt R_4+\e \vw\cdot\nabla_x R_4+R_4-\bar R_4&=&\ll R_4,\\
R_4(0,\vx,\vw)&=&\sum_{k=1}^4\e^k\ub_{B,k}(0,\vx,\vw),\\
R_4(t,\vx_0,\vw)&=&\sum_{k=1}^4\e^k\ub_{I,k}(t,\vx_0,\vw)\ \ \text{for}\ \ \vw\cdot\vec n<0\ \
\text{and}\ \ \vx_0\in\p\Omega.\no
\end{array}
\right.
\end{eqnarray}
Note that the initial data and boundary data are nonzero due the contribution of initial layer and boundary data at the point $(t,\vx)=(0,\vx_0)$.
By Theorem \ref{improved LI estimate}, we have
\begin{eqnarray}
\im{R_4}{[0,\infty)\times\Omega\times\s^1}
&\leq& C(\Omega)\bigg(\frac{1}{\e^{4}}\tm{\ll
R_4}{[0,\infty)\times\Omega\times\s^1}+\im{\ll
R_4}{[0,\infty)\times\Omega\times\s^1}\bigg)\\
&&+\im{\sum_{k=1}^4\e^k\ub_{B,k}(0,\vx,\vw)}{\Omega\times\s^1}\no\\
&&+\im{\sum_{k=1}^4\e^k\ub_{I,k}(t,\vx_0,\vw)}{[0,t]\times\Gamma^-}\no\\
&\leq&C(\Omega)\bigg(\frac{1}{\e^{4}}(C\e^5)+(C\e^5)\bigg)+C\e+C\e=C(\Omega)\e.\no
\end{eqnarray}
Hence, we have
\begin{eqnarray}
\nm{u^{\e}-\sum_{k=0}^4\e^k\u_k-\sum_{k=0}^4\e^k\ub_{I,k}-\sum_{k=0}^4\e^k\ub_{B,k}}_{L^{\infty}([0,\infty)\times\Omega\times\s^1)}=o(1).
\end{eqnarray}
Since it is easy to see
\begin{eqnarray}
\nm{\sum_{k=1}^4\e^k\u_k+\sum_{k=1}^4\e^k\ub_{I,k}+\sum_{k=1}^4\e^k\ub_{B,k}}_{L^{\infty}(\Omega\times\s^1)}=O(\e),
\end{eqnarray}
our result naturally follows.
\end{proof}

\section{Counterexample for Classical Approach}

In this section, we present the classical approach in \cite{Bensoussan.Lions.Papanicolaou1979} to construct asymptotic expansion, especially the boundary layer expansion, and give a counterexample to show this method is problematic in unsteady equation.

\subsection{Discussion on Expansions except Boundary Layer}

Basically, the expansions for interior solution and initial layer are identical to our method, so omit the details and only present the notation.
We define the interior expansion as follows:
\begin{eqnarray}\label{interior expansion.}
\uc(t,\vx,\vw)\sim\sum_{k=0}^{\infty}\e^k\uc_k(t,\vx,\vw),
\end{eqnarray}
$\uc_0(t,\vx,\vw)$ satisfies the equation
\begin{eqnarray}\label{interior 1.}
\left\{
\begin{array}{rcl}
\uc_0&=&\buc_0,\\
\dt\buc_0-\Delta_x\buc_0&=&0.
\end{array}
\right.
\end{eqnarray}
$\uc_1(t,\vx,\vw)$ satisfies
\begin{eqnarray}\label{interior 2.}
\left\{
\begin{array}{rcl}
\uc_1&=&\buc_1-\vw\cdot\nx\uc_0,\\
\dt\buc_1-\Delta_x\buc_1&=&0,
\end{array}
\right.
\end{eqnarray}
and $\uc_k(t,\vx,\vw)$ for $k\geq2$ satisfies
\begin{eqnarray}\label{interior 3.}
\left\{
\begin{array}{rcl}
\uc_k&=&\buc_k-\vw\cdot\nx\uc_{k-1}-\dt\uc_{k-2},\\
\dt\buc_k-\Delta_x\buc_k&=&0.
\end{array}
\right.
\end{eqnarray}
With the substitution (\ref{substitution 0}),
we define the initial layer expansion as follows:
\begin{eqnarray}\label{initial layer expansion.}
\ubc_I(\tau,\vx,\vw)\sim\sum_{k=0}^{\infty}\e^k\ubc_{I,k}(\tau,\vx,\vw),
\end{eqnarray}
where $\ubc_{I,0}$ satisfies
\begin{eqnarray}
\left\{
\begin{array}{rcl}
\p_{\tau}\bubc_{I,0}&=&0,\\\rule{0ex}{1.0em}
\ubc_{I,0}(\tau,\vx,\vw)&=&\ue^{-\tau}\ubc_{I,0}(0,\vx,\vw)+(1-\ue^{-\tau})\bubc_{I,0}(0,\vx).
\end{array}
\right.
\end{eqnarray}
and $\ubc_{I,k}(\tau,\vx,\vw)$ for $k\geq1$ satisfies
\begin{eqnarray}
\left\{
\begin{array}{rcl}
\p_{\tau}\bubc_{I,k}&=&-\displaystyle\int_{\s^1}\bigg(\vw\cdot\nabla_x\ubc_{I,k-1}\bigg)\ud{\vw},\\\rule{0ex}{1.5em}
\ubc_{I,k}(\tau,\vx,\vw)&=&\ue^{-\tau}\ubc_{I,k}(0,\vx,\vw)+\displaystyle\int_0^{\tau}\bigg(\bubc_{I,k}-\vw\cdot\nabla_x\ubc_{I,k-1}\bigg)(s,\vx,\vw)\ue^{s-\tau}\ud{s}.
\end{array}
\right.
\end{eqnarray}

\subsection{Boundary Layer Expansion}

By the idea in \cite{Bensoussan.Lions.Papanicolaou1979}, the boundary layer expansion can be defined by introducing substitutions (\ref{substitution 1}), (\ref{substitution 2}), and (\ref{substitution 3}). Note that we terminate here and do not further use substitution (\ref{substitution 4}). Hence, we have the
transformed equation for (\ref{transport}) as
\begin{eqnarray}\label{classical temp.}
\left\{ \begin{array}{l}\displaystyle \e^2\frac{\p u^{\e}}{\p
t}+\sin(\theta+\xi)\frac{\p
u^{\e}}{\p\eta}-\frac{\e}{1-\e\eta}\cos(\theta+\xi)\frac{\p
u^{\e}}{\p\theta}+u^{\e}-\frac{1}{2\pi}\int_{-\pi}^{\pi}u^{\e}\ud{\xi}=0,\\\rule{0ex}{1.0em}
u^{\e}(0,\eta,\theta,\xi)=h(\eta,\theta),\\\rule{0ex}{1.0em}
u^{\e}(0,\theta,\xi)=g(\theta,\xi)\ \ \text{for}\ \
\sin(\theta+\xi)>0.
\end{array}
\right.
\end{eqnarray}
\ \\
We now define the Milne expansion of boundary layer as follows:
\begin{eqnarray}\label{classical expansion.}
\ubc(t,\eta,\theta,\phi)\sim\sum_{k=0}^{\infty}\e^k\ubc_k(t,\eta,\theta,\phi),
\end{eqnarray}
where $\ubc_k$ can be determined by comparing the order of $\e$ via
plugging (\ref{classical expansion.}) into the equation
(\ref{classical temp.}). Thus, in a neighborhood of the boundary, we
have
\begin{eqnarray}
\sin(\theta+\xi)\frac{\p
\ubc_0}{\p\eta}+\ubc_0-\bubc_0&=&0,\label{cexpansion temp 5.}\\
\sin(\theta+\xi)\frac{\p
\ubc_1}{\p\eta}+\ubc_1-\bubc_1&=&\frac{1}{1-\e\eta}\cos(\theta+\xi)\frac{\p
\ubc_0}{\p\theta},\label{cexpansion temp 6.}\\
\sin(\theta+\xi)\frac{\p
\ubc_2}{\p\eta}+\ubc_2-\bubc_2&=&\frac{1}{1-\e\eta}\cos(\theta+\xi)\frac{\p
\ubc_1}{\p\theta}-\frac{\p \ubc_0}{\p
t},\label{cexpansion temp 7.}\\
\ldots\nonumber\\
\sin(\theta+\xi)\frac{\p
\ubc_k}{\p\eta}+\ubc_k-\bubc_k&=&\frac{1}{1-\e\eta}\cos(\theta+\xi)\frac{\p
\ubc_{k-1}}{\p\theta}-\frac{\p \ubc_{k-2}}{\p t},
\end{eqnarray}
where
\begin{eqnarray}
\bar
\ubc_k(t,\eta,\theta)=\frac{1}{2\pi}\int_{-\pi}^{\pi}\ubc_k(t,\eta,\theta,\xi)\ud{\xi}.
\end{eqnarray}

\subsection{Classical Approach to Construct Asymptotic Expansion}

Similarly, we require the zeroth order expansion of initial and boundary data be satisfied, i.e. we have
\begin{eqnarray}
\uc_0(0,\vx,\vw)+\ubc_{I,0}(0,\vx,\vw)+\ubc_{B,0}(0,\vx,\vw)&=&h,\\
\uc_0(t,\vx_0,\vw)+\ubc_{I,0}(t,\vx_0,\vw)+\ubc_{B,0}(t,\vx_0,\vw)&=&g.
\end{eqnarray}
The construction of $\uc_k$, $\ubc_{I,k}$, and $\ubc_{B,k}$ by the idea in
\cite{Bensoussan.Lions.Papanicolaou1979} can be
summarized as follows:\\
\ \\
Assume the cut-off function $\psi$ and $\psi_0$ are defined as (\ref{cut-off 1}) and (\ref{cut-off 2}).\\
\ \\
Step 1: Construction of zeroth order terms.\\
The zeroth order boundary layer solution is defined as
\begin{eqnarray}\label{classical temp 1.}
\left\{
\begin{array}{rcl}
\ubc_0(t,\eta,\theta,\xi)&=&\psi_0(\e\eta)\bigg(\f_0(t,\eta,\theta,\xi)-f_0(t,\infty,\theta)\bigg),\\
\sin(\theta+\xi)\dfrac{\p \f_0}{\p\eta}+\f_0-\bar \f_0&=&0,\\
\f_0(t,0,\theta,\xi)&=&g(t,\theta,\xi)\ \ \text{for}\ \
\sin(\theta+\xi)>0,\\\rule{0ex}{1em}
\lim_{\eta\rt\infty}\f_0(t,\eta,\theta,\xi)&=&f_0(t,\infty,\theta).
\end{array}
\right.
\end{eqnarray}
The zeroth order initial layer is defined as
\begin{eqnarray}\label{classical temp 21.}
\left\{
\begin{array}{rcl}
\ubc_{I,0}(\tau,\vx,\vw)&=&\ff_0(\tau,\vx,\vw)-\ff_0(\infty,\vx)\\
\p_{\tau}\bar\ff_0&=&0,\\\rule{0ex}{1.0em}
\ff_0(\tau,\vx,\vw)&=&\ue^{-\tau}\ff_0(0,\vx,\vw)+(1-\ue^{-\tau})\bar\ff_0(0,\vx),\\
\ff_0(0,\vx,\vw)&=&h(\vx,\vw),\\
\lim_{\tau\rt\infty}\ff_0(\tau,\vx,\vw)&=&\ff_0(\infty,\vx).
\end{array}
\right.
\end{eqnarray}
Then we can define the
zeroth order interior solution as
\begin{eqnarray}\label{classical temp 2.}
\left\{
\begin{array}{rcl}
\uc_0&=&\buc_0,\\\rule{0ex}{1em}
\dt\buc_0-\Delta_x\buc_0&=&0,\\\rule{0ex}{1em}\buc_0(0,\vx)&=&\ff_0(\infty,\vx)\
\ \text{in}\ \ \Omega,\\\rule{0ex}{1em}
\buc_0(t,\vx_0)&=&\f_0(t,\infty,\theta)\ \ \text{on}\ \ \p\Omega,
\end{array}
\right.
\end{eqnarray}
where $(t,\vx,\vw)$ is the same point as $(\tau,\eta,\theta,\xi)$.\\
\ \\
Step 2: Construction of first order terms. \\
Define the first order boundary layer solution as
\begin{eqnarray}\label{classical temp 3.}
\left\{
\begin{array}{rcl}
\ubc_1(t,\eta,\theta,\xi)&=&\psi_0(\e\eta)\bigg(\f_1(t,\eta,\theta,\xi)-f_1(t,\infty,\theta)\bigg),\\
\sin(\theta+\xi)\dfrac{\p \f_1}{\p\eta}+\f_1-\bar
\f_1&=&\cos(\theta+\xi)\dfrac{\psi(\e\eta)}{1-\e\eta}\dfrac{\p
\ubc_0}{\p\theta},\\\rule{0ex}{1em}
\f_1(t,0,\theta,\xi)&=&\vw\cdot\nx\uc_0(t,\vx_0,\vw)\ \ \text{for}\
\ \sin(\theta+\xi)>0,\\\rule{0ex}{1em}
\lim_{\eta\rt\infty}\f_1(t,\eta,\theta,\xi)&=&f_1(t,\infty,\theta).
\end{array}
\right.
\end{eqnarray}
Define the first order initial layer as
\begin{eqnarray}\label{classical temp 22.}
\left\{
\begin{array}{rcl}
\ubc_{I,1}(\tau,\vx,\vw)&=&\ff_1(\tau,\vx,\vw)-\ff_1(\infty,\vx)\\
\p_{\tau}\bar\ff_1&=&-\displaystyle\int_{\s^1}\bigg(\vw\cdot\nabla_x\ubc_{I,0}\bigg)\ud{\vw},\\\rule{0ex}{1.5em}
\ff_1(\tau,\vx,\vw)&=&\ue^{-\tau}\ff_1(0,\vx,\vw)+\displaystyle\int_0^{\tau}\bigg(\bar\ff_1-\vw\cdot\nabla_x\ubc_{I,0}\bigg)(s,\vx,\vw)\ue^{s-\tau}\ud{s},\\
\ff_1(0,\vx,\vw)&=&\vw\cdot\nx\u_0(0,\vx,\vw),\\
\lim_{\tau\rt\infty}\ff_1(\tau,\vx,\vw)&=&\ff_1(\infty,\vx).
\end{array}
\right.
\end{eqnarray}
Define the first order interior solution as
\begin{eqnarray}\label{classical temp 5.}
\left\{
\begin{array}{rcl}
\uc_1&=&\buc_1-\vw\cdot\nx\uc_0,\\
\dt\buc_1-\Delta_x\buc_1&=&0,\\\rule{0ex}{1em}\buc_1(0,\vx)&=&\ff_1(\infty,\vx)\ \
\text{in}\ \ \Omega,\\\rule{0ex}{1em}
\buc_1(t,\vx)&=&f_1(t,\infty,\theta)\ \ \text{on}\ \ \p\Omega.
\end{array}
\right.
\end{eqnarray}
\ \\
Step 3: Construction of second order terms. \\
Define the second order boundary layer solution as
\begin{eqnarray}\label{classical temp 3.}
\left\{
\begin{array}{rcl}
\ubc_2(t,\eta,\theta,\xi)&=&\psi_0(\e\eta)\bigg(\f_2(t,\eta,\theta,\xi)-f_2(t,\infty,\theta)\bigg),\\
\sin(\theta+\xi)\dfrac{\p \f_2}{\p\eta}+\f_2-\bar
\f_2&=&\cos(\theta+\xi)\dfrac{\psi(\e\eta)}{1-\e\eta}\dfrac{\p
\ubc_1}{\p\theta}-\dfrac{\p\ubc_0}{\p t},\\\rule{0ex}{1em}
\f_2(t,0,\theta,\xi)&=&\vw\cdot\nx\uc_1(t,\vx_0,\vw)+\dt\uc_0(t,\vx_0,\vw)\
\ \text{for}\ \ \sin(\theta+\xi)>0,\\\rule{0ex}{1em}
\lim_{\eta\rt\infty}\f_2(t,\eta,\theta,\xi)&=&f_2(t,\infty,\theta).
\end{array}
\right.
\end{eqnarray}
Define the second order initial layer as
\begin{eqnarray}\label{classical temp 23.}
\left\{
\begin{array}{rcl}
\ubc_{I,2}(\tau,\vx,\vw)&=&\ff_2(\tau,\vx,\vw)-\ff_2(\infty,\vx)\\
\p_{\tau}\bar\ff_2&=&-\displaystyle\int_{\s^1}\bigg(\vw\cdot\nabla_x\ubc_{I,1}\bigg)\ud{\vw},\\\rule{0ex}{1.5em}
\ff_2(\tau,\vx,\vw)&=&\ue^{-\tau}\ff_2(0,\vx,\vw)+\displaystyle\int_0^{\tau}\bigg(\bar\ff_2-\vw\cdot\nabla_x\ubc_{I,1}\bigg)(s,\vx,\vw)\ue^{s-\tau}\ud{s},\\
\ff_2(0,\vx,\vw)&=&\vw\cdot\nx\u_1(0,\vx,\vw)+\dt\u_0(0,\vx,\vw),\\
\lim_{\tau\rt\infty}\ff_2(\tau,\vx,\vw)&=&\ff_2(\infty,\vx).
\end{array}
\right.
\end{eqnarray}
Define the first order interior solution as
\begin{eqnarray}\label{classical temp 5.}
\left\{
\begin{array}{rcl}
\uc_2&=&\buc_2-\vw\cdot\nx\uc_1-\dt\uc_0,\\
\dt\buc_2-\Delta_x\buc_2&=&0,\\\rule{0ex}{1em}\buc_2(0,\vx)&=&\ff_2(\infty,\vx)\ \
\text{in}\ \ \Omega,\\\rule{0ex}{1em}
\buc_2(t,\vx)&=&f_2(t,\infty,\theta)\ \ \text{on}\ \ \p\Omega.
\end{array}
\right.
\end{eqnarray}
\ \\
Step 4: Generalization to arbitrary $k$.\\
Similar to above procedure, we can define the $k^{th}$ order
boundary layer solution as
\begin{eqnarray}
\\
\left\{
\begin{array}{rcl}
\ubc_k(t,\eta,\theta,\xi)&=&\psi_0(\e\eta)\bigg(\f_k(t,\eta,\theta,\xi)-f_k(t,\infty,\theta)\bigg),\\
\sin(\theta+\xi)\dfrac{\p \f_k}{\p\eta}+\f_k-\bar
\f_k&=&\cos(\theta+\xi)\dfrac{\psi(\e\eta)}{1-\e\eta}\dfrac{\p
\ubc_{k-1}}{\p\theta}-\dfrac{\p\ubc_{k-2}}{\p t},\\\rule{0ex}{1em}
\f_k(t,0,\theta,\xi)&=&\vw\cdot\nx\uc_{k-1}(t,\vx_0,\vw)+\dt\uc_{k-2}(t,\vx_0,\vw)\
\ \text{for}\ \ \sin(\theta+\xi)>0,\\\rule{0ex}{1em}
\lim_{\eta\rt\infty}\f_k(t,\eta,\theta,\xi)&=&f_k(t,\infty,\theta).\no
\end{array}
\right.
\end{eqnarray}
Define the $k^{th}$ order initial layer as
\begin{eqnarray}\label{classical temp 24.}
\left\{
\begin{array}{rcl}
\ubc_{I,k}(\tau,\vx,\vw)&=&\ff_k(\tau,\vx,\vw)-\ff_k(\infty,\vx)\\
\p_{\tau}\bar\ff_k&=&-\displaystyle\int_{\s^1}\bigg(\vw\cdot\nabla_x\ubc_{I,k-1}\bigg)\ud{\vw},\\\rule{0ex}{1.5em}
\ff_k(\tau,\vx,\vw)&=&\ue^{-\tau}\ff_k(0,\vx,\vw)+\displaystyle\int_0^{\tau}\bigg(\bar\ff_k-\vw\cdot\nabla_x\ubc_{I,k-1}\bigg)(s,\vx,\vw)\ue^{s-\tau}\ud{s},\\
\ff_k(0,\vx,\vw)&=&\vw\cdot\nx\u_{k-1}(0,\vx,\vw)+\dt\u_{k-2}(0,\vx,\vw),\\
\lim_{\tau\rt\infty}\ff_k(\tau,\vx,\vw)&=&\ff_k(\infty,\vx).
\end{array}
\right.
\end{eqnarray}
Define the $k^{th}$ order interior solution as
\begin{eqnarray}
\left\{
\begin{array}{rcl}
\uc_k&=&\buc_k-\vw\cdot\nx\uc_{k-1}-\dt\uc_{k-2},\\\rule{0ex}{1em}
\dt\buc_k-\Delta_x\buc_k&=&0,\\\rule{0ex}{1em}\buc_k(0,\vx)&=&\ff_k(\infty,\vx)\ \
\text{in}\ \ \Omega,\\\rule{0ex}{1em} \buc_k&=&f_k(t,\infty,\theta)\
\ \text{on}\ \ \p\Omega.
\end{array}
\right.
\end{eqnarray}
By the idea in \cite{Bensoussan.Lions.Papanicolaou1979}, we should
be able to prove the following result:
\begin{theorem}\label{main fake 1.}
Assume $g(t,\vx_0,\vw)$ and $h(\vx,\vw)$ are sufficiently smooth. Then
for the unsteady neutron transport equation (\ref{transport}), the
unique solution $u^{\e}(t,\vx,\vw)\in
L^{\infty}([0,\infty)\times\Omega\times\s^1)$ satisfies
\begin{eqnarray}\label{main fake theorem 1.}
\lnm{u^{\e}-\uc_0-\ubc_{I,0}-\ubc_{B,0}}=O(\e).
\end{eqnarray}
\end{theorem}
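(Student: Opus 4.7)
The plan is to follow the strategy of Section 5 verbatim: define the remainder
\[
R_N := u^{\e} - \sum_{k=0}^N \e^k \uc_k - \sum_{k=0}^N \e^k \ubc_{I,k} - \sum_{k=0}^N \e^k \ubc_{B,k},
\]
establish the pointwise bound $\abs{\ll R_N} \leq C\e^{N+1}$, and invoke Theorem \ref{improved LI estimate}. By construction, only the zeroth-order expansion is required to match the initial and boundary data of (\ref{transport}), so the initial and boundary data of $R_N$ consist solely of higher-order corner contributions of size $O(\e)$, exactly as in Section 5. Taking $N=4$ will then yield the desired $O(\e)$ estimate.

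The interior and initial-layer contributions are handled identically to Section 5. The interior equations (\ref{interior 1.})--(\ref{interior 3.}) give
\[
\ll \sum_{k=0}^N \e^k \uc_k \;=\; \e^{N+2}\dt \uc_N + \e^{N+1}\vw\cdot\nx \uc_N \;=\; O(\e^{N+1}),
\]
and the initial-layer recursion (\ref{classical temp 21.})--(\ref{classical temp 24.}) gives $\ll \sum_{k=0}^N \e^k \ubc_{I,k} = \e^{N+1}\vw\cdot\nx \ubc_{I,N} = O(\e^{N+1})$, under the assumed $C^4$ smoothness of $g$ and $h$.

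The crux is the boundary-layer residual. Working in the $(t,\eta,\theta,\xi)$ coordinates of (\ref{classical temp.}) and writing $\ubc_k = \psi_0(\e\eta)\v_k$ with $\v_k := \f_k - f_k(t,\infty,\theta)$, I would insert the classical Milne equation (\ref{cexpansion temp 5.}) and use $\psi\psi_0 = \psi_0$ to cancel the bulk. At leading order this gives
\[
\ll \ubc_0 \;=\; \e^2 \psi_0\p_t \v_0 + \sin(\theta+\xi)(\p_{\eta}\psi_0)\v_0 - \frac{\psi_0\e}{1-\e\eta}\cos(\theta+\xi)\p_{\theta}\v_0,
\]
with analogous expressions at higher orders. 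The first term is $O(\e^2)$; the cutoff-derivative term is exponentially small in $1/\e$ by the Milne decay of Remark \ref{Milne remark} (the effective support of $\p_\eta\psi_0$ is $\eta \gtrsim 1/\e$). The remaining term would contribute $O(\e)$ provided $\p_{\theta}\v_k \in L^{\infty}$ uniformly in $\e$.

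The hard part — and precisely the point where the classical BLP construction fails in this unsteady setting — is justifying the uniform bound $\p_{\theta}\f_k \in L^{\infty}$ for the classical Milne problem (\ref{cexpansion temp 5.}). Because its characteristic coefficient $\sin(\theta+\xi)$ itself depends on $\theta$, formal differentiation in $\theta$ produces a source proportional to $\cos(\theta+\xi)\p_{\eta}\f_k$, which from the equation itself equals $-\cos(\theta+\xi)(\f_k-\bar\f_k)/\sin(\theta+\xi)$: a $1/\sin(\theta+\xi)$ singularity across the grazing set $\sin(\theta+\xi)=0$. The rotation (\ref{substitution 4}) used in Section 3 was introduced specifically to remove this $\theta$-dependence of the characteristic, and that is why the geometric-correction construction controls the $\p_\theta$ source in $L^\infty$ (as noted at the end of Section 3) whereas the present classical construction cannot. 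Thus the formal proof outlined above fails to close at exactly this step, and the counterexample (\ref{main theorem 3}) constructed in Section 6 exploits this very gap.
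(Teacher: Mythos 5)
You correctly recognized that this ``theorem'' is not actually provable: the paper introduces it only as the conclusion the classical BLP construction \emph{would} purport to reach, in order to refute it with the counterexample that immediately follows (the label \ref{main fake 1.} and the surrounding text ``we should be able to prove'' signal this). The paper's own account of the breakdown --- the source for the $\f_1$ Milne problem (\ref{classical temp 3.}) requires $\p_{\theta}\big(\f_0 - f_0(t,\infty,\theta)\big)\in L^{\infty}$, hence $\p_\eta\f_0\in L^{\infty}$, which fails for specific data (citing the Appendix of \cite{AA003}) --- is exactly your diagnosis, which you merely make a bit more explicit by reading $\p_\eta\f_0 = -(\f_0-\bar\f_0)/\sin(\theta+\xi)$ off the equation and pointing to the rotation (\ref{substitution 4}) as the device that removes the $\theta$-dependence of the characteristic.
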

\ \\
Similar to the analysis in \cite[Section 2.2]{AA003}, considering a
crucial observation that based on Remark \ref{Milne remark}, we know
that the existence of solution $\f_1$ requires
\begin{eqnarray}
\frac{\p
}{\p\theta}\bigg(\f_0(t,\eta,\theta,\xi)-f_0(t,\infty,\theta)\bigg)\in
L^{\infty}([0,\infty)^2\times[-\pi,\pi)\times[-\pi,\pi)).
\end{eqnarray}
This in turn requires
\begin{eqnarray}
\frac{\p \f_0}{\p\eta}\in
L^{\infty}([0,\infty)^2\times[-\pi,\pi)\times[-\pi,\pi)).
\end{eqnarray}
On the other hand, as shown by the Appendix of \cite{AA003}, we can
show for specific $g$, it holds that $\px\f_0\notin
L^{\infty}([0,\infty)^2\times[-\pi,\pi)\times[-\pi,\pi))$. Due to
intrinsic singularity for (\ref{classical temp 1.}), this
construction breaks down.

\subsection{Counterexample to Classical Approach}

\begin{theorem}
If $g(t,\theta,\phi)=t^2\ue^{-t}\cos\phi$ and $h(\vx,\vw)=0$, then there exists
a $C>0$ such that
\begin{eqnarray}
\lnm{u^{\e}-\uc_0-\ubc_{I,0}-\ubc_{B,0}}\geq C>0
\end{eqnarray}
when $\e$ is sufficiently small, where the interior solution $\uc_0$ is
defined in (\ref{classical temp 2.}), the initial layer $\ubc_{I,0}$ is defined in (\ref{classical temp 21.}), and the boundary layer $\ub_{B,0}$ is defined in (\ref{classical temp 1.}).
\end{theorem}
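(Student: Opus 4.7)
The plan is to use the reverse triangle inequality together with the first part of Theorem~\ref{main 2} already established above: since $\lnm{u^\e-\u_0-\ub_{I,0}-\ub_{B,0}}=o(1)$, we have
\[
\lnm{u^\e-\uc_0-\ubc_{I,0}-\ubc_{B,0}}\ \ge\ \lnm{(\u_0+\ub_{I,0}+\ub_{B,0})-(\uc_0+\ubc_{I,0}+\ubc_{B,0})}-o(1),
\]
so it suffices to show the difference between the two zeroth-order expansions is bounded below by a fixed positive constant, uniformly in small $\e$.

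Next I would exploit the simplifications coming from $h\equiv 0$. The initial-layer constructions (\ref{expansion temp 21}) and (\ref{classical temp 21.}) start from $h$, so both $\ff_0^{\e}$ and $\ff_0$ vanish identically, which kills the initial layers $\ub_{I,0}$ and $\ubc_{I,0}$ as well as the initial data for $\u_0$ and $\uc_0$. Thus it is enough to bound $\lnm{(\u_0+\ub_{B,0})-(\uc_0+\ubc_{B,0})}$ from below. Evaluating at $\eta=0$ and telescoping via (\ref{expansion temp 8})--(\ref{expansion temp 9}) yields $\u_0(t,\vx_0)+\ub_{B,0}(t,0,\theta,\phi)=\f_0^{\e}(t,0,\theta,\phi)$, and similarly $\uc_0(t,\vx_0)+\ubc_{B,0}(t,0,\theta,\phi)=\f_0(t,0,\theta,\phi)$ from (\ref{classical temp 1.})--(\ref{classical temp 2.}). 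Since both sides coincide with the common inflow $g$ on $\sin\phi>0$, the whole lower bound has to come from outflow directions $\sin\phi<0$.

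The heart of the argument is then to produce $t_\ast>0$, $\theta_\ast\in[-\pi,\pi)$ and $\phi_\ast$ with $\sin\phi_\ast<0$ at which
\[
\bigl|\f_0^{\e}(t_\ast,0,\theta_\ast,\phi_\ast)-\f_0(t_\ast,0,\theta_\ast,\phi_\ast)\bigr|\ \ge\ 2C>0
\]
uniformly for $\e$ small. I would follow the template of \cite{AA003} (Section~2.2 and the Appendix there): the classical Milne equation (\ref{cexpansion temp 5.}) is singular at the grazing set $\sin(\theta+\xi)=0$, and its outflow trace $\f_0(t,0,\theta,\phi)$ inherits a genuine jump near the grazing line from the $\cos\phi$ angular profile of the datum $g=t^2\ue^{-t}\cos\phi$. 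In contrast, the geometric correction $-F(\e;\eta)\cos\phi\,\p_\phi$ in (\ref{expansion temp 5}) generates circular characteristics that carry information continuously across the grazing set, so by Theorems~\ref{Milne theorem 2} and~\ref{Milne theorem 4} the geometric trace $\f_0^{\e}$ is continuous and uniformly bounded there. Choosing any $t_\ast$ with $t_\ast^2\ue^{-t_\ast}\ne 0$ keeps the $\cos\phi$ mode of $g$ nontrivial, so the gap between the two traces persists at strictly positive size.

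The main obstacle is precisely this quantitative separation---showing, rather than merely asserting, that the classical Milne trace carries an order-one jump at the grazing set which the geometric Milne trace does not, and that the effect survives in the unsteady problem. Because the time factor $t^2\ue^{-t}$ enters only as a multiplicative scalar once $t_\ast$ is chosen, the analysis at fixed $t_\ast$ reduces to the steady grazing-set construction already carried out in \cite{AA003}; every other step above (the reverse triangle inequality, the vanishing of initial layers under $h=0$, the telescoping identification of the boundary traces, and the appeal to the first part of Theorem~\ref{main 2}) is a routine bookkeeping reduction.
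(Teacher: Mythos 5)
Your initial reductions --- the reverse triangle inequality via the first part of Theorem \ref{main 2}, the vanishing of $\ub_{I,0}$ and $\ubc_{I,0}$ because $h\equiv 0$, and the telescoping identities $\u_0(t,\vx_0)+\ub_{B,0}(t,0,\theta,\phi)=\f_0^{\e}(t,0,\theta,\phi)$ and $\uc_0(t,\vx_0)+\ubc_{B,0}(t,0,\theta,\phi)=\f_0(t,0,\theta,\phi)$ --- are correct and mirror the paper's opening (the paper argues by contradiction, which is the same device). But at the one step that carries the weight of the theorem your route departs from the paper and, as you acknowledge yourself, is not actually carried out. You restrict to $\eta=0$ and $\sin\phi<0$ and appeal to a ``jump at the grazing set'' in $\f_0$ that $\f_0^{\e}$ supposedly lacks. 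Two problems: Theorems \ref{Milne theorem 2} and \ref{Milne theorem 4} give only $L^{\infty}$ and weighted $L^{\infty}$ control, not continuity of the outflow trace across the grazing angle, so the asserted continuity of $\f_0^{\e}$ is unsupported; and for any \emph{fixed} $\phi<0$ the geometric characteristic through $(0,\phi)$ degenerates to the straight one as $\e\rt 0$, so $\f_0^{\e}(t,0,\theta,\phi)\rt\f_0(t,0,\theta,\phi)$ and the gap at fixed outflow angles vanishes. The $O(1)$ discrepancy lives only at $\e$-scaled angles, and there you would have to perform the quantitative characteristic-by-characteristic analysis, not just invoke the steady appendix of \cite{AA003}.

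The paper proves the gap differently and completely: it evaluates both Milne solutions at the interior point $(\eta,\phi)=(n\e,\e)$ with $\sin\phi>0$, writes each as a Duhamel integral along its own characteristic after proving continuity of $\bar u$ and $\bar U$ at $\eta=0$, and computes the two exponential factors explicitly --- $\ue^{-n}$ for the straight characteristic versus $\ue^{1-\sqrt{1+2n}}$ for the curved one. Because these factors differ for $n>0$ and the datum $G=t^{2}\ue^{-t}\cos\phi+2$ forces $G(\e)-\bar u(0)$ to be bounded away from zero (via the maximum principle of Theorem \ref{Milne theorem 5}), the two representations differ by $O(1)$, giving the contradiction. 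That explicit comparison of decay factors is exactly the content your proposal defers; without it the argument is not a proof.
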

\begin{proof}
We divide the proof into several steps:\\
\ \\
Step 1: Basic settings.\\
By (\ref{classical temp 1.}), the solution $\f_0$ satisfies the Milne
problem
\begin{eqnarray}
\left\{
\begin{array}{rcl}\displaystyle
\sin(\theta+\xi)\frac{\p \f_0}{\p\eta}+\f_0-\bar \f_0&=&0,\\
\f_0(t,0,\theta,\xi)&=&g(t,\theta,\xi)\ \ \text{for}\ \
\sin(\theta+\xi)>0,\\\rule{0ex}{1em}
\lim_{\eta\rt\infty}\f_0(t,\eta,\theta,\xi)&=&f_0(t,\infty,\theta).
\end{array}
\right.
\end{eqnarray}
For convenience of comparison, we make the substitution
$\phi=\theta+\xi$ to obtain
\begin{eqnarray}
\left\{
\begin{array}{rcl}\displaystyle
\sin\phi\frac{\p \f_0}{\p\eta}+\f_0-\bar \f_0&=&0,\\
\f_0(t,0,\theta,\phi)&=&g(t,\theta,\phi)\ \ \text{for}\ \
\sin\phi>0,\\\rule{0ex}{1em}
\lim_{\eta\rt\infty}\f_0(t,\eta,\theta,\phi)&=&f_0(t,\infty,\theta).
\end{array}
\right.
\end{eqnarray}
Assume the theorem is incorrect, i.e.
\begin{eqnarray}
\lim_{\e\rt0}\lnm{(\uc_0+\ubc_{I,0}+\ubc_{B,0})-(\u_0+\ub_{I,0}+\ub_{B,0})}=0.
\end{eqnarray}
We can easily show the zeroth order initial layer $\ubc_{B,0}=\ub_{B,0}=0$ due to $h(\vx,\vw)=0$. Since the boundary $g(t,\theta,\phi)=t^2\ue^{-t}\cos\phi$ independent of
$\theta$, by (\ref{classical temp 1.}) and (\ref{expansion temp 9}),
it is obvious the limit of zeroth order boundary layer
$f_0(t,\infty,\theta)$ and $f_0^{\e}(t,\infty,\theta)$ satisfy
$f_0(t,\infty,\theta)=C_1(t)$ and $f_0^{\e}(t,\infty,\theta)=C_2(t)$
for some constant $C_1(t)$ and $C_2(t)$ independent of $\theta$. By
(\ref{classical temp 2.}), (\ref{expansion temp 8}) and solution continuity of heat equation, we can derive
the interior solutions are smooth and are close to constants
$\uc_0=C_1(t)$ and $\u_0=C_2(t)$ in a neighborhood $O(\e)$ of the
boundary with difference $O(\e)$. Hence, we may further derive in
this neighborhood,
\begin{eqnarray}\label{compare temp 5.}
\lim_{\e\rt0}\lnm{(f_0(\infty)+\ubc_0)-(f_0^{\e}(\infty)+\ub_0)}=0.
\end{eqnarray}
For $0\leq\eta\leq 1/(2\e)$, we have $\psi_0=1$, which means
$\f_0=\ubc_0+f_0(\infty)$ and $f_0^{\e}=\ub_0+f_0^{\e}(\infty)$ in this neighborhood of the boundary. Define $u=f_0+2$, $U=f_0^{\e}+2$ and
$G=g+2=t^2\ue^{-t}\cos\phi+2$, then $u(\eta,\phi)$ satisfies the equation
\begin{eqnarray}\label{compare flat equation.}
\left\{
\begin{array}{rcl}\displaystyle
\sin\phi\frac{\p u}{\p\eta}+u-\bar u&=&0,\\
u(0,\phi)&=&G(\phi)\ \ \text{for}\ \ \sin\phi>0,\\\rule{0ex}{1em}
\lim_{\eta\rt\infty}u(\eta,\phi)&=&2+f_0(\infty),
\end{array}
\right.
\end{eqnarray}
and $U(\eta,\phi)$ satisfies the equation
\begin{eqnarray}\label{compare force equation.}
\left\{
\begin{array}{rcl}\displaystyle
\sin\phi\frac{\p U}{\p\eta}+F(\e;\eta)\cos\phi \frac{\p
U}{\p\phi}+U-\bar
U&=&0,\\
U(0,\phi)&=&G(\phi)\ \ \text{for}\ \ \sin\phi>0,\\\rule{0ex}{1em}
\lim_{\eta\rt\infty}U(\eta,\phi)&=&2+f_0^{\e}(\infty).
\end{array}
\right.
\end{eqnarray}
Based on (\ref{compare temp 5.}), we have
\begin{eqnarray}
\lim_{\e\rt0}\lnm{U(\eta,\phi)-u(\eta,\phi)}=0.
\end{eqnarray}
Then it naturally implies
\begin{eqnarray}
\lim_{\e\rt0}\lnm{\bar U(\eta)-\bar u(\eta)}=0.
\end{eqnarray}
\ \\
Step 2: Continuity of $\bar u$ and $\bar U$ at $\eta=0$.\\
For the problem (\ref{compare flat equation.}), we have for any
$r_0>0$
\begin{eqnarray}
\abs{\bar u(\eta)-\bar
u(0)}&\leq&\frac{1}{2\pi}\bigg(\int_{\sin\phi\leq
r_0}\abs{u(\eta,\phi)-u(0,\phi)}\ud{\phi}+\int_{\sin\phi\geq
r_0}\abs{u(\eta,\phi)-u(0,\phi)}\ud{\phi}\bigg).
\end{eqnarray}
Since we have shown $u\in L^{\infty}([0,\infty)\times[-\pi,\pi))$,
then for any $\delta>0$, we can take $r_0$ sufficiently small such
that
\begin{eqnarray}
\frac{1}{2\pi}\int_{\sin\phi\leq
r_0}\abs{u(\eta,\phi)-u(0,\phi)}\ud{\phi}&\leq&\frac{C}{2\pi}\arcsin
r_0\leq \frac{\delta}{2}.
\end{eqnarray}
For fixed $r_0$ satisfying above requirement, we estimate the
integral on $\sin\phi\geq r_0$. By Ukai's trace theorem, $u(0,\phi)$
is well-defined in the domain $\sin\phi\geq r_0$ and is continuous.
Also, by consider the relation
\begin{eqnarray}
\frac{\p u}{\p\eta}(0,\phi)=\frac{\bar u(0)-u(0,\phi)}{\sin\phi},
\end{eqnarray}
we can obtain in this domain $\px u$ is bounded, which further
implies $u(\eta,\phi)$ is uniformly continuous at $\eta=0$. Then
there exists $\delta_0>0$ sufficiently small, such that for any
$0\leq\eta\leq\delta_0$, we have
\begin{eqnarray}
\frac{1}{2\pi}\int_{\sin\phi\geq
r_0}\abs{u(\eta,\phi)-u(0,\phi)}\ud{\phi}&\leq&\frac{1}{2\pi}\int_{\sin\phi\geq
r_0}\frac{\delta}{2}\ud{\phi}\leq\frac{\delta}{2}.
\end{eqnarray}
In summary, we have shown for any $\delta>0$, there exists
$\delta_0>0$ such that for any $0\leq\eta\leq\delta_0$,
\begin{eqnarray}
\abs{\bar u(\eta)-\bar
u(0)}\leq\frac{\delta}{2}+\frac{\delta}{2}=\delta.
\end{eqnarray}
Hence, $\bar u(\eta)$ is continuous at $\eta=0$. By a similar
argument along the characteristics, we can show $\bar U(\eta,\phi)$
is also continuous at $\eta=0$.

In the following, by the continuity, we assume for arbitrary
$\delta>0$, there exists a $\delta_0>0$ such that for any
$0\leq\eta\leq\delta_0$, we have
\begin{eqnarray}
\abs{\bar u(\eta)-\bar u(0)}&\leq&\delta\label{compare temp 1.},\\
\abs{\bar U(\eta)-\bar U(0)}&\leq&\delta\label{compare temp 2.}.
\end{eqnarray}
\ \\
Step 3: Milne formulation.\\
We consider the solution at a specific point $(\eta,\phi)=(n\e,\e)$
for some fixed $n>0$. The solution along the characteristics can be
rewritten as follows:
\begin{eqnarray}\label{compare temp 3.}
u(n\e,\e)=G(\e)\ue^{-\frac{1}{\sin\e}n\e}
+\int_0^{n\e}\ue^{-\frac{1}{\sin\e}(n\e-\k)}\frac{1}{\sin\e}\bar
u(\k)\ud{\k},
\end{eqnarray}
\begin{eqnarray}\label{compare temp 4.}
U(n\e,\e)=G(\e_0)\ue^{-\int_0^{n\e}\frac{1}{\sin\phi(\zeta)}\ud{\zeta}}
+\int_0^{n\e}\ue^{-\int_{\k}^{n\e}\frac{1}{\sin\phi(\zeta)}\ud{\zeta}}\frac{1}{\sin\phi(\k)}\bar
U(\k)\ud{\k},
\end{eqnarray}
where we have the conserved energy along the characteristics
\begin{eqnarray}
E(\eta,\phi)=\cos\phi \ue^{-V(\eta)},
\end{eqnarray}
in which $(0,\e_0)$ and $(\zeta,\phi(\zeta))$ are in the same
characteristics of $(n\e,\e)$.\\
\ \\
Step 4: Estimates of (\ref{compare temp 3.}).\\
We turn to the Milne problem for $u$. We have the natural estimate
\begin{eqnarray}
\int_0^{n\e}\ue^{-\frac{1}{\sin\e}(n\e-\k)}\frac{1}{\sin\e}\ud{\k}&=&\int_0^{n\e}\ue^{-\frac{1}{\e}(n\e-\k)}\frac{1}{\e}\ud{\k}+o(\e)\\
&=&\ue^{-n}\int_0^{n\e}\ue^{\frac{\k}{\e}}\frac{1}{\e}\ud{\k}+o(\e)\nonumber\\
&=&\ue^{-n}\int_0^n\ue^{\zeta}\ud{\zeta}+o(\e)\nonumber\\
&=&(1-\ue^{-n})+o(\e)\nonumber.
\end{eqnarray}
Then for $0<\e\leq\delta_0$, we have $\abs{\bar u(0)-\bar
u(\k)}\leq\delta$, which implies
\begin{eqnarray}
\int_0^{n\e}\ue^{-\frac{1}{\sin\e}(n\e-\k)}\frac{1}{\sin\e}\bar
u(\k)\ud{\k}&=&
\int_0^{n\e}\ue^{-\frac{1}{\sin\e}(n\e-\k)}\frac{1}{\sin\e}\bar u(0)\ud{\k}+O(\delta)\\
&=&(1-\ue^{-n})\bar u(0)+o(\e)+O(\delta)\nonumber.
\end{eqnarray}
For the boundary data term, it is easy to see
\begin{eqnarray}
G(\e)\ue^{-\frac{1}{\sin\e}n\e}&=&\ue^{-n}G(\e)+o(\e)
\end{eqnarray}
In summary, we have
\begin{eqnarray}
u(n\e,\e)=(1-\ue^{-n})\bar u(0)+\ue^{-n}G(\e)+o(\e)+O(\delta).
\end{eqnarray}
\ \\
Step 5: Estimates of (\ref{compare temp 4.}).\\
We consider the $\e$-Milne problem for $U$. For $\e<<1$ sufficiently
small, $\psi(\e)=1$. Then we may estimate
\begin{eqnarray}
\cos\phi(\zeta)\ue^{-V(\zeta)}=\cos\e \ue^{-V(n\e)},
\end{eqnarray}
which implies
\begin{eqnarray}
\cos\phi(\zeta)=\frac{1-n\e^2}{1-\e\zeta}\cos\e.
\end{eqnarray}
and hence
\begin{eqnarray}
\sin\phi(\zeta)=\sqrt{1-\cos^2\phi(\zeta)}=\sqrt{\frac{\e(n\e-\zeta)(2-\e\zeta-n\e^2)}{(1-\e\zeta)^2}\cos^2\e+\sin^2\e}.
\end{eqnarray}
For $\zeta\in[0,\e]$ and $n\e$ sufficiently small, by Taylor's
expansion, we have
\begin{eqnarray}
1-\e\zeta&=&1+o(\e),\\
2-\e\zeta-n\e^2&=&2+o(\e),\\
\sin^2\e&=&\e^2+o(\e^3),\\
\cos^2\e&=&1-\e^2+o(\e^3).
\end{eqnarray}
Hence, we have
\begin{eqnarray}
\sin\phi(\zeta)=\sqrt{\e(\e+2n\e-2\zeta)}+o(\e^2).
\end{eqnarray}
Since $\sqrt{\e(\e+2n\e-2\zeta)}=O(\e)$, we can further estimate
\begin{eqnarray}
\frac{1}{\sin\phi(\zeta)}&=&\frac{1}{\sqrt{\e(\e+2n\e-2\zeta)}}+o(1)\\
-\int_{\k}^{n\e}\frac{1}{\sin\phi(\zeta)}\ud{\zeta}&=&\sqrt{\frac{\e+2n\e-2\zeta}{\e}}\bigg|_{\k}^{n\e}+o(\e)
=1-\sqrt{\frac{\e+2n\e-2\k}{\e}}+o(\e).
\end{eqnarray}
Then we can easily derive the integral estimate
\begin{eqnarray}
\int_0^{n\e}\ue^{-\int_{\k}^{n\e}\frac{1}{\sin\phi(\zeta)}\ud{\zeta}}\frac{1}{\sin\phi(\k)}\ud{\k}&=&
\ue^1\int_0^{n\e}\ue^{-\sqrt{\frac{\e+2n\e-2\k}{\e}}}\frac{1}{\sqrt{\e(\e+2n\e-2\k)}}\ud{\k}+o(\e)\\
&=&\half \ue^1\int_{\e}^{(1+2n)\e}\ue^{-\sqrt{\frac{\sigma}{\e}}}\frac{1}{\sqrt{\e\sigma}}\ud{\sigma}+o(\e)\nonumber\\
&=&\half \ue^1\int_{1}^{1+2n}\ue^{-\sqrt{\rho}}\frac{1}{\sqrt{\rho}}\ud{\rho}+o(\e)\nonumber\\
&=&\ue^1\int_{1}^{\sqrt{{1+2n}}}\ue^{-t}\ud{t}+o(\e)\nonumber\\
&=&(1-\ue^{1-\sqrt{1+2n}})+o(\e)\nonumber.
\end{eqnarray}
Then for $0<\e\leq\delta_0$, we have $\abs{\bar U(0)-\bar
U(\k)}\leq\delta$, which implies
\begin{eqnarray}
\int_0^{n\e}\ue^{-\int_{\k}^{n\e}\frac{1}{\sin\phi(\zeta)}\ud{\zeta}}\frac{1}{\sin\phi(\k)}\bar
U(\k)\ud{\k}&=&
\int_0^{n\e}\ue^{-\int_{\k}^{n\e}\frac{1}{\sin\phi(\zeta)}\ud{\zeta}}\frac{1}{\sin\phi(\k)}\bar U(0)\ud{\k}+O(\delta)\\
&=&(1-\ue^{1-\sqrt{1+2n}})\bar U(0)+o(\e)+O(\delta)\nonumber.
\end{eqnarray}
For the boundary data term, since $G(\phi)$ is $C^1$, a similar
argument shows
\begin{eqnarray}
G(\e_0)\ue^{-\int_0^{n\e}\frac{1}{\sin\phi(\zeta)}\ud{\zeta}}&=&\ue^{1-\sqrt{1+2n}}G(\sqrt{1+2n}\e)+o(\e).
\end{eqnarray}
Therefore, we have
\begin{eqnarray}
U(n\e,\e)=(1-\ue^{1-\sqrt{1+2n}})\bar
U(0)+\ue^{1-\sqrt{1+2n}}G(\sqrt{1+2n}\e)+o(\e)+O(\delta).
\end{eqnarray}
\ \\
Step 6: Contradiction.\\
In summary, we have the estimate
\begin{eqnarray}
u(n\e,\e)&=&(1-\ue^{-n})\bar u(0)+\ue^{-n}G(\e)+o(\e)+O(\delta),\\
U(n\e,\e)&=&(1-\ue^{1-\sqrt{1+2n}})\bar
U(0)+\ue^{1-\sqrt{1+2n}}G(\sqrt{1+2n}\e)+o(\e)+O(\delta).
\end{eqnarray}
The boundary data is $G=t^2\ue^{-t}\cos\phi+2$. Fix $t=1$. Then by the maximum principle
in Theorem \ref{Milne theorem 3}, we can achieve $1\leq
u(0,\phi)\leq3$ and $1\leq U(0,\phi)\leq3$. Since
\begin{eqnarray}
\bar u(0)&=&\frac{1}{2\pi}\int_{-\pi}^{\pi}u(0,\phi)\ud{\phi}
=\frac{1}{2\pi}\int_{\sin\phi>0}u(0,\phi)\ud{\phi}+\frac{1}{2\pi}\int_{\sin\phi<0}u(0,\phi)\ud{\phi}\\
&=&\frac{1}{2\pi}\int_{\sin\phi>0}(2+\ue^{-1}\cos\phi)\ud{\phi}+\frac{1}{2\pi}\int_{\sin\phi<0}u(0,\phi)\ud{\phi}\nonumber\\
&=&2+\frac{1}{2\pi}\int_{\sin\phi>0}\ue^{-1}\cos\phi\ud{\phi}+\frac{1}{2\pi}\int_{\sin\phi<0}u(0,\phi)\ud{\phi}\nonumber,
\end{eqnarray}
we naturally obtain
\begin{eqnarray}
2-\half\ue^{-1}\leq\bar u(0)\leq 2+\half\ue^{-1}.
\end{eqnarray}
Similarly, we can
obtain
\begin{eqnarray}
2-\half\ue^{-1}\leq\bar U(0)\leq 2+\half\ue^{-1}.
\end{eqnarray}
Furthermore, for $\e$
sufficiently small, we have
\begin{eqnarray}
G(\sqrt{1+2n}\e)&=&2+\ue^{-1}+o(\e),\\
G(\e)&=&2+\ue^{-1}+o(\e).
\end{eqnarray}
Hence, we can obtain
\begin{eqnarray}
u(n\e,\e)&=&\bar u(0)+\ue^{-n}(-\bar u(0)+2+\ue^{-1})+o(\e)+O(\delta),\\
U(n\e,\e)&=&\bar U(0)+\ue^{1-\sqrt{1+2n}}(-\bar
U(0)+2+\ue^{-1})+o(\e)+O(\delta).
\end{eqnarray}
Then we can see $\lim_{\e\rt0}\lnm{\bar U(0)-\bar u(0)}=0$ naturally
leads to $\lim_{\e\rt0}\lnm{(-\bar u(0)+2+\ue^{-1})-(-\bar U(0)+2+\ue^{-1})}=0$. Also,
we have $-\bar u(0)+2+\ue^{-1}=O(1)$ and $-\bar U(0)+2+\ue^{-1}=O(1)$. Due to the
smallness of $\e$ and $\delta$, and also $\ue^{-n}\neq
\ue^{1-\sqrt{1+2n}}$, we can obtain
\begin{eqnarray}
\abs{U(n\e,\e)-u(n\e,\e)}=O(1).
\end{eqnarray}
However, above result contradicts our assumption that
$\lim_{\e\rt0}\lnm{U(\eta,\phi)-u(\eta,\phi)}=0$ for any
$(\eta,\phi)$. This completes the proof.
\end{proof}

\section*{Acknowledgements}

The author thanks Yan Guo and Xiongfeng Yang for stimulating
discussions. The research is supported by NSF grant
0967140.

\bibliographystyle{siam}
\bibliography{Reference}

\end{document}